\newtheoremstyle{cited}%
  {3pt}
  {3pt}
  {\itshape}
  {}
  {\bfseries}
  {.}
  {.5em}
  {\thmname{#1} \thmnumber{#2} \thmnote{\normalfont#3}}
\theoremstyle{cited}
\definecolor{dkgreen}{rgb}{0,0.6,0}
\definecolor{gray}{rgb}{0.5,0.5,0.5}
\definecolor{mauve}{rgb}{0.58,0,0.82}
\tiny\color{gray},
\newtheorem{theorem}{Theorem}[section]
\newtheorem{lemma}[theorem]{Lemma}
\newtheorem{proposition}[theorem]{Proposition}
\theoremstyle{definition}
\newtheorem{definition}[theorem]{Definition}
\theoremstyle{remark}
\newtheorem{remark}[theorem]{Remark}
\numberwithin{equation}{section}
\numberwithin{figure}{section}
\newcommand{\ZZ} {\mathbb{Z}}
\newcommand{\RR} {\mathbb{R}}
\newcommand\restr[2]{{
  \left.\kern-\nulldelimiterspace 
  #1 
  \vphantom{\big|} 
  \right|_{#2} 
  }}
\newcommand\C{\mathbb C}
\newcommand\NN{\mathbb N}
\newcommand\R{\mathbb R}
\newcommand\Q{\mathbb Q}
\newcommand\Z{\mathbb Z}
\newcommand\cJ{\mathcal J}
\newcommand\cM{\mathcal M}
\newcommand\cN{\mathcal N}
\newcommand\cT{\mathcal T}
\newcommand\fp{\mathfrak p}
\newcommand\fd{\mathfrak d}
\newcommand\fe{\mathfrak e}
\newcommand\fD{\mathfrak D}
\newcommand\fg{\mathfrak g}
\newcommand\fh{\mathfrak h}
\newcommand\fj{\mathfrak j}
\newcommand\fq{\mathfrak q}
\newcommand\fS{\mathfrak S}
\newcommand\Aut{\operatorname{Aut}}
\newcommand\Hom{\operatorname{Hom}}
\newcommand\Tr{\operatorname{Tr}}
\newcommand\sgn{\operatorname{sgn}}
\newcommand\Supp{\operatorname{Supp}}
\newcommand\Wall{\operatorname{Wall}}
\newcommand\mult{\operatorname{mult}}
\begin{document}

\title[The flow tree formula]{The flow tree formula for Donaldson-Thomas 
invariants of quivers with potentials}
\author[H.\,Arg\"uz]{H\"ulya Arg\"uz}
\address{University of Georgia, Department of Mathematics, Athens, GA 30605}
\email{Hulya.Arguz@uga.edu}

\author[P.\,Bousseau]{Pierrick Bousseau}
\address{University of Georgia, Department of Mathematics, Athens, GA 30605}
\email{Pierrick.Bousseau@uga.edu}

\date{\today}

\begin{abstract}
We prove the flow tree formula conjectured by Alexandrov and Pioline which computes Donaldson-Thomas invariants of quivers with potentials in terms of a smaller set of  attractor invariants.
This result is obtained as a particular case of a more general flow tree formula reconstructing a consistent scattering diagram from its initial walls.
\end{abstract}
\maketitle

\tableofcontents

\section{Introduction}
\label{section_introduction}

Donaldson-Thomas (DT) theory is a topic at the intersection of algebraic geometry, symplectic geometry, representation theory and theoretical physics. 
Given a triangulated category $C$ which is Calabi-Yau of dimension $3$
(CY3) together with a choice of Bridgeland stability condition $\theta$ \cite{MR2373143}, DT invariants are defined by virtually counting $\theta$-semistable objects 
in $C$ \cite{donaldson1998gauge, MR1818182, JoyceSong, kontsevich2008stability}.
In quantum field theory and string theory, they play an important role as counts of BPS states and D-branes \cite{MR2567952}.

Quivers with potentials \cite{MR2480710} provide a natural source of examples of CY3 categories coming from representation theory \cite{ginzburg2006calabi-yau, MR2484733}.
Due to its more algebraic nature, DT theory of quivers with potentials is an ideal setting to study and explore many questions which are also of interest in the geometric incarnations of DT theory given by counts of semistable objects in the derived category of coherent sheaves on Calabi-Yau 3-folds \cite{MR1818182} and 
by counts of special Lagrangian submanifolds in Calabi-Yau 3-folds \cite{MR1957663, MR1941627}.

A key phenomenon in DT theory is wall-crossing in the space of stability conditions: DT invariants are constant in the complement of
countably many real codimension one loci in the space of stability conditions called walls, but they
jump discontinuously in general when the stability condition crosses a wall. The precise description
of this jumping behaviour of DT invariants across walls in the space of stability conditions is given by the wall-crossing formula of Joyce-Song and 
Kontsevich-Soibelman \cite{JoyceSong,kontsevich2008stability}, which is a universal
algebraic expression that contains some amount of combinatorial complexity.

By successive applications of the wall-crossing formula, one can show that the DT invariants of a quiver with potential are determined by a much smaller subset of \emph{attractor DT invariants} defined by picking particular stability conditions \cite{MR3330788, AlexandrovPioline}. In \cite{AlexandrovPioline}, Alexandrov-Pioline
conjectured, based on string-theoretic predictions, a new formula that expresses DT invariants in terms of the attractor DT invariants as a sum over trees, called the \emph{flow tree formula}. Their conjecture reduces the general wall-crossing
formula to an iterative application of the much simpler primitive wall-crossing formula. The main result of the present paper is a proof of the flow tree formula. 
In fact, we prove a version of the flow tree formula in the more general context of consistent scattering diagrams.

The flow tree formula is a new tool to unravel some of the deep and hidden structures in DT theory.  For example, versions of the flow tree formula are a major tool in the recent formulation of the conjectural proposal of
\cite{MR4072224} (see also 
\cite{MR4170291}) for the construction of modular completions for generating series of DT invariants counting coherent sheaves supported on surfaces inside Calabi-Yau 3-folds.

\subsection{Background}
\label{section_backgound_intro}
A quiver with potential $(Q,W)$ is given by a finite oriented graph $Q$, and a finite formal linear combination $W$ of oriented cycles in $Q$. 
We assume that $Q$ does not contain oriented $2$-cycles, and we denote by 
$Q_0$ the set of vertices of $Q$.
For every dimension vector $\gamma \in N \coloneqq \Z^{Q_0}$
and stability parameter
\begin{equation} \theta \in \gamma^{\perp}  \subset M_\R \coloneqq \Hom(N,\R)\,,\end{equation} 
where $\gamma^{\perp} \coloneqq \{\theta\in M_\R|\,\theta(\gamma)=0\}$, 
the theory of King's stability for quiver representations
\cite{MR1315461} defines a quasiprojective variety
$M_\gamma^{\theta}$, parametrizing S-equivalence classes of 
$\theta$-semistable representations of $Q$ of dimension $\gamma$, and a regular function 
\begin{equation} \Tr (W)_\gamma^\theta \colon M_\gamma^\theta \longrightarrow \C\,.
\end{equation}
Assuming that $\theta$ is $\gamma$-generic in the sense that $\theta(\gamma')=0$ implies $\gamma'$ collinear with $\gamma$, the Donaldson-Thomas (DT) invariant $\Omega_\gamma^{\theta}$ is an integer which is a virtual count of the critical points of $\Tr (W)_\gamma^\theta$. 
Applying Hodge theory to the sheaf of vanishing cycles of  $\Tr (W)_\gamma^\theta$, the integer 
$\Omega_\gamma^\theta$ can be refined into a Laurent polynomial $\Omega_\gamma^\theta(y,t)$ in two variables $y$ and $t$ and with integer coefficients, referred to as \emph{refined DT invariants} \cite{JoyceSong, kontsevich2008stability, MR2801406, MR2650811, MR4000572, davison2015donaldson, MR4132957}.
It is often convenient to use the rational functions $\overline{\Omega}_\gamma^\theta(y,t) \in \Q(y,t)$
defined as in \cite{JoyceSong, kontsevich2008stability, MR2875965} by 
\begin{equation}
    \overline{\Omega}_{\gamma}^\theta(y,t) 
    \coloneqq \sum_{\substack{\gamma' \in N\\ 
    \gamma=k \gamma',\, k\in \Z_{\geq 1}}} \frac{1}{k} \frac{y-y^{-1}}{y^k - y^{-k}} \Omega_{\gamma'}^{\theta}(y^k,t^k) \,,
\end{equation}
and referred to as \emph{rational DT invariants}. 

The DT invariants $\Omega_\gamma^\theta(y,t)$ are locally constant functions of the $\gamma$-generic stability parameter $\theta \in \gamma^{\perp}$ and their jumps across the loci of non-$\gamma$-generic stability parameters are given by the wall-crossing formula of Joyce-Song and 
Kontsevich-Soibelman \cite{JoyceSong,kontsevich2008stability}. 
Using the wall-crossing formula, the DT invariants can be computed in terms of the simpler \emph{attractor DT invariants}, which are 
DT invariants at specific values of the stability parameter.

Let $\langle -,-\rangle \colon N \times N \rightarrow \Z$ be the skew-symmetric form given by
\begin{equation}
\label{Eq: Euler form}
 \langle \gamma, \gamma' 
\rangle  = \sum_{i,j\in Q_0}(a_{ij}-a_{ji})\gamma_i\gamma_j'\,,   
\end{equation}
where $a_{ij}$ is the number of arrows in $Q$ from the vertex $i$ to the vertex $j$. The specific point $\langle \gamma,- \rangle \in \gamma^{\perp} \subset M_{\RR}$ is called the \emph{attractor point} for $\gamma$ \cite{AlexandrovPioline, mozgovoy2020attractor}. In general, the attractor point 
$\langle \gamma,-\rangle$ is not $\gamma$-generic and we define
the attractor DT invariants $\Omega_\gamma^{*}(y,t)$ by
\begin{equation} \Omega_\gamma^{*}(y,t)
\coloneqq \Omega_\gamma^{\theta_\gamma}(y,t)\,,\end{equation} 
where $\theta_\gamma$
is a small $\gamma$-generic perturbation of 
$\langle \gamma,- \rangle$ in $\gamma^{\perp}$ \cite{AlexandrovPioline, mozgovoy2020attractor}. One can check that $\Omega_\gamma^{*}(y,t)$ is independent of the choice of the small perturbation
\cite{AlexandrovPioline, mozgovoy2020attractor}.

For an acyclic quiver $Q$ (and so $W=0$), or more generally for a quiver $Q$ with a non-degenerate potential $W$
admitting a green-to-red sequence \cite{mou2019scattering}, the attractor DT invariants are as simple as possible: 

\begin{equation} \Omega_\gamma^{*}(y,t)= \begin{cases} 
     1 & \mathrm{if} \gamma=(\delta_{ij})_{i\in Q_0} \mathrm{~for~ some~} j \in Q_0 \\
     0 & \mathrm{elsewise,}
   \end{cases}
\end{equation}
where $\delta_{ij}$ is the Kronecker delta.
Similarly, for a quiver with potential $(Q,W)$ describing the derived category of coherent sheaves on a local del Pezzo surface, it is recently conjectured \cite{beaujard2020vafa, mozgovoy2020attractor}
that $\Omega_\gamma^{*}(y,t)=0$ unless
$\gamma=(\delta_{ij})_{i\in Q_0}$
for some $j \in Q_0$ or unless $\gamma$ is the class of the skyscraper sheaf of a point. However, for quivers with potential $(Q,W)$ describing interesting parts of the derived category of coherent sheaves on a compact Calabi-Yau 3-fold, the attractor DT invariants are expected to be non-vanishing and to typically exhibit an exponential growth. We refer to \cite{MR2913216, MR3036499, MR2967676, MR3033854, MR3036440} for some explicit examples
involving $n$-gon quivers.

The rational DT invariants $\overline{\Omega}_\gamma^{\theta}(y,t)$
for general $\gamma$-generic stability parameters $\theta \in \gamma^{\perp}$
are expressed in terms of the rational attractor DT invariants $\overline{\Omega}_\gamma^{*}(y,t)$ by a formula of the form
\begin{equation} \label{eq_reconstruction_intro}
    \overline{\Omega}_\gamma^\theta(y,t) = \sum_{r\geq 1} \sum_{\substack{\{\gamma_i\}_{1\leq i\leq r}\\ \sum_{i=1}^r \gamma_i = \gamma}} \frac{1}{|\Aut(\{\gamma_i\}_i)|} 
    F_r^{\theta}(\gamma_1,\dots,\gamma_r) \prod_{i=1}^r \overline{\Omega}_{\gamma_i}^{*}(y,t)\,,
\end{equation}
where the second sum is over the multisets $\{\gamma_i\}_{1\leq i\leq r}$ with $\gamma_i \in N$ and $\sum_{i=1}^r \gamma_i=\gamma$. Here, the denominator $|\Aut(\{\gamma_i\}_i)|$ is the order of the symmetry group of $\{\gamma_i\}$: if $m_{\gamma'}$ is the number of times that $\gamma' \in N$ appears in $\{\gamma_i\}_i$, then 
$|\Aut(\{\gamma_i\}_i)|=\prod_{\gamma'\in N}m_{\gamma'}!$.
The coefficients $F_r^{\theta}(\gamma_1,\dots,\gamma_r)$ 
are element of $\Q(y)$ and are universal in the sense that they depend 
on $(Q,W)$ only through the skew-symmetric form 
$\langle -,-\rangle$ on $N$. Our main result is the proof of an explicit formula, called the 
\emph{flow tree formula} and conjectured by 
Alexandrov-Pioline
 \cite{AlexandrovPioline}, which computes the coefficients
$F_r^{\theta}(\gamma_1,\dots,\gamma_r)$
in \eqref{eq_reconstruction_intro} combinatorially in terms of a sum over
binary rooted 
trees, and where the contribution of each tree is computed following the flow
on the tree starting at the root and ending at the leaves.

\subsection{Main result: the flow tree formula}
\label{section_main_result_intro}
We introduce some notations which are necessary to state precisely the 
flow tree formula in Theorem \ref{main_thm_intro} below.
We fix $\gamma \in N$, a 
$\gamma$-generic stability parameter 
$\theta \in \gamma^{\perp}$, and $\gamma_1,\dots,\gamma_r \in N$
such that $\sum_{i=1}^r \gamma_i=\gamma$.

An essential ingredient in the formulation of the flow tree
formula for $F_r^\theta(\gamma_1,\dots,\gamma_r)$ is 
the choice of a generic skew-symmetric perturbation 
$(\omega_{ij})_{1\leq i,j\leq r}$ of the skew-symmetric
matrix 
$(\langle \gamma_i,\gamma_j\rangle)_{1\leq i,j \leq r}$.
The matrix $(\omega_{ij})_{1\leq i,j\leq r}$ cannot be viewed in general as a skew-symmetric 
bilinear form on the sublattice of $N$ generated by $\gamma_1,\dots,\gamma_r$ 
because $\gamma_1,\dots,\gamma_r$ are not necessarily linearly independent in $N$. Nevertheless, the matrix 
$(\omega_{ij})_{1\leq i,j \leq r}$
can always be interpreted as a skew-symmetric bilinear form $\omega$ on a rank $r$ free abelian group
$\cN \coloneqq \bigoplus_{i=1}^r \Z e_i$ with a basis $\{e_i\}_{1\leq i\leq r}$
and such that $\omega_{ij}=\omega(e_i,e_j)$. 
From this point of view, there is a natural additive map 
\begin{align}
    p \colon \mathcal{N} \longrightarrow N \\
    e_i \longmapsto \gamma_i \,, \nonumber
\end{align}
which enables us to define a skew-symmetric bilinear form $\eta$ on $\cN$ as being the pullback of $\langle-,-\rangle$ on $N$, that is, 
$\eta(e_i,e_j) \coloneqq \langle \gamma_i,\gamma_j\rangle$,
and we consider a real-valued skew-symmetric form $\omega$ on 
$\cN$ obtained as a small enough generic perturbation of $\eta$.
Let $\cM_\R \coloneqq \Hom(\cN,\R)$ and 
$q \colon M_\R \rightarrow \cM_\R$ be the map induced from 
$p \colon \cN \rightarrow N$ by duality. We denote by 
\begin{equation} \label{Eq: alpha_intro}
\alpha \coloneqq q(\theta)
\end{equation}
the image in $\cM_\R$ of the stability parameter $\theta \in M_\R$ by the map $q$.

The flow tree formula in Theorem \ref{main_thm_intro} below takes the form of a sum over 
trees. More precisely, we consider rooted trees which apart from the root vertex have $r$ univalent vertices, or leaves, decorated by the basis elements $e_1,\dots,e_r$ of $\cN$.
For such a tree $T$, we denote by
$V_T^{\circ}$ the set of interior, that is, non-univalent, vertices.
We endow each such tree with the flow from the root to the leaves.
Given a vertex $v$ in a tree, the vertex adjacent to $v$ coming before $v$ along the flow is referred to as the \emph{parent} of $v$ and denoted by $p(v)$, and the vertices adjacent to $v$ and coming after $v$ along the flow are referred to as the \emph{children} of $v$, as illustrated in Figure \ref{Fig:Atree}. 
Any vertex that comes after $v$ along the flow is a \emph{descendent} of $v$.
Let $\cT_r$ be the set of such trees which are \emph{binary}, that is such that
each interior vertex $v$ of a tree $T \in \cT_r$ has exactly two children.
For every tree $T \in \cT_r$ and $v$ a vertex of $T$, we define $e_v \in \mathcal{N}$ as the sum of all elements that appear as decorations on the leaves which are descendent of a vertex $v$. We denote by $\cT_r^{\eta}$ the set of trees $T \in \cT_r$
such that $\eta(e_{v'},e_{v''}) \neq 0$ 
where $v$ is the child of the root and 
$v',v''$ are the children of $v$.

\begin{figure}
\center{\scalebox{.6}{\input{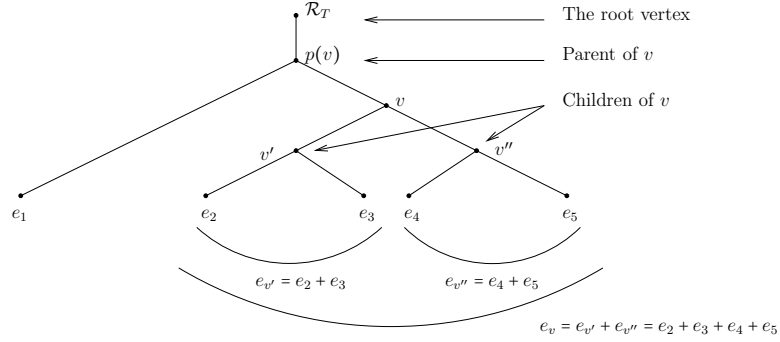}}}
\caption{A binary tree $T$ with five leaves for $\mathcal{N}=\ZZ e_1 \oplus \ZZ e_2 \oplus \ZZ e_3 \oplus \ZZ e_4 \oplus \ZZ e_5$.}
\label{Fig:Atree}
\end{figure}

For every tree $T \in \cT_r^\eta$ and $v$ a vertex of $T$ distinct from the leaves, we define $\theta_{T,v}^{\alpha,\omega} \in \cM_\R$
recursively as follows: 
If $v$ is the root vertex, then set $\theta_{T,v}^{\alpha,\omega}\coloneqq \alpha$. If $v$ is not the root, let $p(v)$ be the parent of $v$, and for any of the children, say $v'$ of $v$,
and $\iota_{e_v} \omega \coloneqq \omega(e_v,-) \in \cM_{\R}$, define
    \begin{equation} \label{eq_discrete_flow_intro}
        \theta_{T,v}^{\alpha,\omega} \coloneqq  
        \theta_{T,p(v)}^{\alpha,\omega} 
- \frac{\theta_{T,p(v)}^{\alpha,\omega}(e_{v'})}{
\omega(e_v,e_{v'})} \iota_{e_v} \omega \,.
\end{equation}
We show in Lemma \ref{lem_attractor} that this definition is independent of the choice of the child $v'$ of $v$.
Following  \cite{AlexandrovPioline}, we call 
$v \mapsto \theta_{T,v}^{\alpha,\omega}$ the 
\emph{discrete attractor flow}.

For every tree $T \in \cT_r^\eta$ and interior vertex 
$v \in V_T^\circ$, we
fix a labeling $v'$ and $v''$ of the two children of $v$, and we define 
\begin{equation} \label{eq_epsilon_intro}
    \epsilon_{T,v}^{\alpha, \omega} \coloneqq - \frac{\sgn 
   (\theta_{T,p(v)}^{\alpha,\omega}(e_{v'}))
   + \sgn ( \omega(e_{v'}, e_{v''}))}{2}  \in \{0,1,-1\} \,,
\end{equation}
where $\sgn(x) \in \{\pm 1\}$ is the sign of $x \in \R-\{0\}$.
We show in \S \ref{section_trees_flows} that for generic $\omega \in \bigwedge^2\mathcal{M}_{\RR}$, we have $\theta_{T,p(v)}^{\alpha,\omega}(e_{v'}) \neq 0$
and $\omega(e_{v'}, e_{v''}) \neq 0$ and so the definition of $\epsilon_{T,v}^{\alpha,\omega}$ indeed makes sense. Our main result is the following \emph{flow tree formula}, conjectured in \cite{AlexandrovPioline}, which enables us to determine the coefficients $F_r^{\theta}(\gamma_1,\dots,\gamma_r)$ in \eqref{eq_reconstruction_intro} expressing the DT invariants $\overline{\Omega}_\gamma^{\theta}(y,t)$ in terms of the attractor DT invariants 
$\overline{\Omega}_{\gamma_i}^{*}(y,t)$.
\begin{theorem} \label{main_thm_intro}
For every choice a small enough generic perturbation $\omega \in \bigwedge^2 \cM_\R$ of the skew-symmetric bilinear form $\eta$, the universal coefficient $F_r^{\theta}(\gamma_1,\dots,\gamma_r)$ in \eqref{eq_reconstruction_intro} is given by the flow tree formula:
\begin{equation}
\label{eq:flow_tree_formula_1_intro}
    F_r^\theta(\gamma_1,\dots,\gamma_r) 
    = \sum_{T \in \cT_r^{\eta}} \prod_{v \in V_T^\circ} \epsilon_{T,v}^{\alpha,\omega} \,\, \kappa(\eta(e_{v'},e_{v''}))\,,
\end{equation}
where $\epsilon_{T,v}^{\alpha,\omega}$ is as in \eqref{eq_epsilon_intro}
and \begin{equation}
    \label{Eq: kappa}
    \kappa(x)\coloneqq {(-1)^x} \cdot \frac{y^x-y^{-x}}{y-y^{-1}}
\end{equation}
for every $x \in \Z$.
\end{theorem}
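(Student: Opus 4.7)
The plan is to deduce the flow tree formula from a more general reconstruction theorem for consistent scattering diagrams, and then specialize to the stability scattering diagram $\mathfrak{D}(Q,W)$ associated to $(Q,W)$. Recall that $\mathfrak{D}(Q,W)$ is a scattering diagram in $M_\R$ whose walls are decorated by elements of a refined quantum torus Lie algebra of $(N, \langle-,-\rangle)$, and which is uniquely determined by its initial walls supported on the attractor hyperplanes $\gamma^\perp$ and carrying $\overline{\Omega}_\gamma^{*}(y,t)$. The coefficient $F_r^\theta(\gamma_1,\dots,\gamma_r)$ in \eqref{eq_reconstruction_intro} is then exactly the coefficient of $\prod_{i=1}^r \overline{\Omega}_{\gamma_i}^*(y,t)$ in the wall function of $\mathfrak{D}(Q,W)$ at the generic stability parameter $\theta$. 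Thus proving \eqref{eq:flow_tree_formula_1_intro} amounts to reconstructing this coefficient as a sum over trees indexing sequences of primitive wall-crossings.

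The first step is to lift the problem from $N$ to the rank-$r$ free lattice $\cN = \bigoplus_i \Z e_i$ equipped with the perturbed skew-symmetric form $\omega$. A small enough generic $\omega$ can be realized as a genuine perturbation of $\mathfrak{D}(Q,W)$ supported on $\cM_\R$: the initial walls $e_i^\perp$ split into transverse hyperplanes, and the consistency relations are now imposed only for pairwise (primitive) interactions. Because $\omega$ is generic, the consistent completion of this perturbed scattering diagram consists only of walls obtained from finite binary trees of primitive crossings, and the pushforward by $q:M_\R \to \cM_\R$ recovers the original diagram. In particular, $F_r^\theta(\gamma_1,\dots,\gamma_r)$ is recovered as the coefficient of the monomial $z^{e_1}\cdots z^{e_r}$ in the wall function of the perturbed diagram at $\alpha=q(\theta)$.

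The second step is the inductive construction of this coefficient along binary trees. Starting at $\alpha$ and flowing toward the attractors, each interior vertex $v$ of a tree $T$ records a primitive wall-crossing fusing the walls carrying classes $e_{v'}$ and $e_{v''}$ into a new wall carrying $e_v = e_{v'}+e_{v''}$, supported on the hyperplane $e_v^\perp$. The definition \eqref{eq_discrete_flow_intro} of the discrete attractor flow is designed precisely so that $\theta_{T,v}^{\alpha,\omega}(e_{v'})=0$, i.e.\ so that $\theta_{T,v}^{\alpha,\omega}$ lies on the intersection of $e_{v'}^\perp$ and $e_{v''}^\perp$ where the two walls meet; the flow then continues in the direction transverse to the newly created wall $e_v^\perp$. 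The output of each primitive crossing in the refined quantum torus Lie algebra is the commutator $[z^{e_{v'}}, z^{e_{v''}}]$ whose coefficient is exactly $\kappa(\eta(e_{v'},e_{v''}))$; the sign $\epsilon_{T,v}^{\alpha,\omega}$ in \eqref{eq_epsilon_intro} encodes whether the flow actually crosses the outgoing wall $e_v^\perp$ starting from the incoming side dictated by $\sgn(\theta_{T,p(v)}^{\alpha,\omega}(e_{v'}))$ and $\sgn(\omega(e_{v'},e_{v''}))$. Summing over $T\in \cT_r^\eta$ enumerates all such combinatorial histories.

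The main obstacle will be verifying that this tree expansion is \emph{complete} and \emph{well-defined}. Completeness requires showing that no contributions arise from trees $T\notin \cT_r^\eta$ (i.e.\ where $\eta(e_{v'},e_{v''})=0$ at the vertex adjacent to the root) nor from non-binary degenerations; for the latter, a careful perturbation argument shows that as $\omega \to \eta$, higher-valent vertices arise only from coalescing generic binary vertices, and the $\kappa$-factors combine additively via Jacobi/pentagon relations so that degenerations do not contribute extra terms. Well-definedness requires Lemma \ref{lem_attractor} (independence of the child choice $v'$, which follows from $\omega(e_v,e_{v'})=-\omega(e_v,e_{v''})$ when $e_v=e_{v'}+e_{v''}$), independence of the answer on the generic perturbation $\omega$ even though individual tree contributions depend on $\omega$ (a wall-crossing identity expressing that signs cancel across trees related by deformations through $\omega$-degenerate loci), and independence of the choice of basis $\{e_i\}$ and labelling of children. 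Matching these cancellations against the Kontsevich-Soibelman wall-crossing formula in the refined setting — via the quantum dilogarithm identities underlying consistency of $\mathfrak{D}(Q,W)$ — is the technical heart of the argument.
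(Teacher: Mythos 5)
Your high-level strategy matches the paper: lift the problem from $N$ to the rank-$r$ lattice $\cN$ so that the classes $\gamma_1,\dots,\gamma_r$ become linearly independent, work in the auxiliary scattering diagram in $\cM_\R$ (the paper's $(\cN^+,\fh)$-scattering diagram), run an induction along the discrete attractor flow in which each step uses consistency around a joint (a primitive wall-crossing), and then apply the resulting scattering-diagram statement to Bridgeland's stability scattering diagram. That much is right.

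However, there are two substantive gaps. First, you misdescribe what the perturbation $\omega$ does. You write that ``a small enough generic $\omega$ can be realized as a genuine perturbation of $\mathfrak D(Q,W)$ supported on $\cM_\R$: the initial walls $e_i^\perp$ split into transverse hyperplanes.'' This conflates two distinct operations. The splitting of coinciding initial walls (when some $\gamma_i=\gamma_j$) is achieved already by passing from $M_\R$ to $\cM_\R$ using $\eta$, not by the $\omega$-perturbation. The walls of the $(\cN^+,\fh)$-scattering diagram form the fixed cone complex $\fS_{\Supp(\fh)}$ and do not move when $\omega$ varies. What $\omega$ changes is only the embedding of the abstract trees into $\cM_\R$, via the directions $\iota_{e_v}\omega$ of the discrete attractor flow. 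Genericity of $\omega$ is then used to guarantee (this is Lemma \ref{lem_no_triple_intersection}) that the embedded flow only meets joints where $e_J$ splits into exactly two pieces, so that only primitive crossings occur. As the paper stresses in Remark \ref{rem_gps}, this is precisely what distinguishes the $\omega$-perturbation from the Gross--Pandharipande--Siebert wall-perturbation; your description essentially reduces it to the GPS picture, which is the mechanism behind the variant Theorem \ref{main_thm_intro_moz_gps}, not Theorem \ref{main_thm_intro}.

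Second, you never use the ``small enough'' half of the hypothesis ($\omega\in U^\eta$), and this is not optional. The induction needs a base case that anchors the recursion: following the half-line $x+\R_{\geq 0}\iota_{e_J}\omega$ past its last joint, the wall function must reduce to the initial data, which vanishes for $|J|>1$. This is Proposition \ref{prop_reduction_initial}, and it uses exactly the condition that $\omega(e_J,n)$ and $\eta(e_J,n)$ have the same sign for all $n\in\cN_e^+$, so that $\iota_{e_J}\omega$ and $\iota_{e_J}\eta$ lie in the same wall at infinity and Proposition \ref{prop_initial_scattering} applies. Without this, the sum of differences across joints does not telescope to the claimed formula. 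Finally, your proposed route to $\omega$-independence (deform $\omega\to\eta$ and invoke Jacobi/pentagon relations to show coalescing binary vertices combine) is a detour: in the paper's argument, $\omega$-independence of the total sum is automatic, since for every admissible $\omega$ it equals the $\omega$-independent wall function $\phi(\sigma)_{e_J}$; no separate combinatorial identity is needed.
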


Theorem \ref{main_thm} presents a version of 
Theorem \ref{main_thm_intro} in which we phrase more explicitly the 
condition that $\omega$ should be a small enough generic perturbation of $\eta$.

We also prove a variant of the flow tree formula
recently
conjectured by Mozgovoy \cite{mozgovoy2021operadic},
which relies on a perturbation of points in $\mathcal{M}_{\RR}$ rather than the skew-symmetric form. We first remark that  
$\theta \in \gamma^{\perp}$ implies that $\alpha \in \cM_\R$ defined in \eqref{Eq: alpha_intro}
satisfies $\alpha \in (\sum_{i=1}^r e_i)^{\perp}$.
For $\beta$ a small perturbation of $\alpha$ in the hyperplane 
$(\sum_{i=1}^r e_i)^{\perp}$, we define $\theta_{T,v}^{\beta,\eta} \in \mathcal{M}_{\RR}$ and $\epsilon_{T,v}^{\beta,\eta} \in \{0,1,-1\}$ by replacing 
$\alpha$ by 
$\beta$ and $\omega$ by $\eta$
in \eqref{eq_discrete_flow_intro}
and \eqref{eq_epsilon_intro}.

\begin{theorem} \label{main_thm_intro_moz_gps}
For every choice $\beta \in (\sum_{i=1}^r e_i)^{\perp}$ 
of small enough generic perturbation of $\alpha \coloneqq q(\theta)$
in the hyperplane $(\sum_{i=1}^r e_i)^{\perp}$, the universal coefficient $F_r^{\theta}(\gamma_1,\dots,\gamma_r)$ 
is given by:
\begin{equation}
\label{eq:flow_tree_formula_2_intro}
    F_r^\theta(\gamma_1,\dots,\gamma_r) 
    = \sum_{T \in \cT_r^{\eta}} \prod_{v \in V_T^\circ} \epsilon_{T,v}^{\beta,\eta} \,\, \kappa(\eta(e_{v'},e_{v''}))\,,
\end{equation}
where $\epsilon_{T,v}^{\alpha,\omega}$ is as in \eqref{eq_epsilon_intro}
and $\kappa$ is as in \eqref{Eq: kappa}.
\end{theorem}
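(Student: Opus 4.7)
The plan is to deduce Theorem~\ref{main_thm_intro_moz_gps} from Theorem~\ref{main_thm_intro} by a direct comparison of the two perturbation schemes. Both statements express the same universal coefficient $F_r^\theta(\gamma_1,\dots,\gamma_r)$ as a sum over trees $T \in \cT_r^{\eta}$ whose vertex contributions factor through $\kappa(\eta(e_{v'},e_{v''}))$, which is identical in both formulas, times a sign $\epsilon_{T,v}$ whose definition depends on what has been perturbed. It therefore suffices to exhibit, for any small enough generic $\beta \in (\sum_{i=1}^r e_i)^{\perp}$, a small enough generic $\omega \in \bigwedge^2 \cM_\R$ such that
\[
\epsilon_{T,v}^{\alpha,\omega} \;=\; \epsilon_{T,v}^{\beta,\eta}
\]
for every $T \in \cT_r^{\eta}$ and every interior vertex $v \in V_T^{\circ}$; once this is done, applying Theorem~\ref{main_thm_intro} to $(\alpha,\omega)$ instantly yields the formula of Theorem~\ref{main_thm_intro_moz_gps}.

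To construct such an $\omega$, I would analyze the discrete attractor flow perturbatively. Writing $\omega = \eta + s\,\omega_0$ for small $s>0$ and expanding the recursion \eqref{eq_discrete_flow_intro} in $s$, I would show by induction on the depth of $v$ in $T$ that $\theta_{T,v}^{\alpha,\omega}$ admits a well-defined leading-order value as $s \to 0^+$, and that the signs $\sgn(\theta_{T,p(v)}^{\alpha,\omega}(e_{v'}))$ and $\sgn(\omega(e_{v'},e_{v''}))$ entering \eqref{eq_epsilon_intro} agree with $\sgn(\theta_{T,p(v)}^{\beta,\eta}(e_{v'}))$ and $\sgn(\eta(e_{v'},e_{v''}))$ for a suitable $\beta$ determined by $\omega_0$. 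The algebraic reason for the matching is that perturbing $\omega$ at first order in $s$ acts on the recursion as an infinitesimal shift of the basepoint $\alpha$, because \eqref{eq_discrete_flow_intro} is affine in $\theta_{T,p(v)}^{\alpha,\omega}$ and has only a simple pole in $\omega$ at $\omega=\eta$; the correction to $\alpha$ lies automatically in $(\sum_i e_i)^\perp$ because the recursion preserves pairings with $e_v = \sum_{\text{leaves below }v} e_i$.

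The main obstacle I expect is a uniformity issue: a priori the induced first-order shift of $\alpha$ depends on $(T,v)$, so it is not obvious that a single $\omega_0$ can realize a prescribed $\beta - \alpha$ compatibly at every vertex of every tree. I would resolve this by a genericity argument in $\bigwedge^2 \cM_\R$: the desired sign conditions carve out finitely many open chambers, and I would show that the subset realizing a fixed direction $\beta - \alpha$ is nonempty by a dimension count, using surjectivity of the natural linear map from $\omega_0$ to the collection of first-order shifts onto $(\sum_i e_i)^{\perp}$. A conceptually cleaner alternative, which I would keep in reserve, is to rerun the scattering-diagram proof of Theorem~\ref{main_thm_intro} within the more general framework of the paper, but with a perturbation of the evaluation point in place of a perturbation of the walls; the tree combinatorics should carry over essentially verbatim, since both perturbations play the same role of placing the evaluation inside a chamber of a finite wall arrangement determined by the underlying consistent scattering diagram.
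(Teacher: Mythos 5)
Your primary route — deducing Theorem~\ref{main_thm_intro_moz_gps} from Theorem~\ref{main_thm_intro} by producing, for each admissible $\beta$, an $\omega$ with $\epsilon_{T,v}^{\alpha,\omega}=\epsilon_{T,v}^{\beta,\eta}$ for all $(T,v)$ — has a gap that you flag yourself but do not close, and I do not believe the dimension-count sketch closes it. Both perturbation schemes agree to zeroth order: as $\omega\to\eta$ and $\beta\to\alpha$, both $\theta_{T,p(v)}^{\alpha,\omega}(e_{v'})$ and $\theta_{T,p(v)}^{\beta,\eta}(e_{v'})$ tend to the common (possibly zero) value $\theta_{T,p(v)}^{\alpha,\eta}(e_{v'})$, so the signs automatically coincide at every $(T,v)$ where that limit is nonzero. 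The whole content of the genericity requirement lives on the degenerate locus $\theta_{T,p(v)}^{\alpha,\eta}(e_{v'})=0$, and there the sign is governed by the leading-order correction in the perturbation parameter. The correction induced by $\omega=\eta+s\,\omega_0$ is \emph{not} uniformly expressible as a basepoint shift $\alpha\mapsto\alpha+s\,\beta_0$: the first-order correction to $\theta_{T,v}^{\alpha,\omega}$ at the child of the root involves terms $\iota_{e_v}\omega_0$ and $\omega_0(e_v,e_{v'})\,\iota_{e_v}\eta$, and the effective basepoint shift one would read off depends on the subtree (i.e.\ on $e_{v'}=e_{J'}$), so different trees impose different and generally incompatible $\beta_0$'s. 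The two perturbation spaces have different dimensions ($\binom{r}{2}$ versus $r-1$), and the collection of sign conditions over all $(T,v)$ with degenerate zeroth order need not carve out matching chambers in the two spaces. What Theorems~\ref{main_thm_intro} and~\ref{main_thm_intro_moz_gps} together assert is an equality of \emph{sums over trees}, not a tree-by-tree matching of sign patterns; your argument would require the stronger tree-by-tree matching, which I see no reason to expect and which you have not established.

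Your ``alternative in reserve,'' however, is precisely the paper's argument, and should have been your main route. The paper proves a second flow tree formula for scattering diagrams (Theorem~\ref{thm_flow_tree_formula_scattering_2}) by re-running the consistency/tropical-tree argument of Theorem~\ref{thm_flow_tropical} with the pair $(\beta,\eta)$ in place of $(\alpha,\omega)$ — the tree embeddings $j_T^{\beta,\eta}$, the relevant-joint analysis, the no-triple-intersection lemma, and the step-by-step iteration of the primitive wall-crossing all go through verbatim, and the termination argument ($\phi_{k,\infty}=0$) is in fact simpler since the unbounded edge has direction $\iota_{e_J}\eta$ itself. Theorem~\ref{main_thm_intro_moz_gps} then follows exactly as Theorem~\ref{main_thm_intro} does, by specializing to Bridgeland's stability scattering diagram via Theorem~\ref{thm_D_D_gamma} and Proposition~\ref{prop_initial_scattering}. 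Worth noting (cf.\ Remark~\ref{rem_gps}): the $(\beta,\eta)$-embedded trees all lie in a single fiber of the projection $\pi\colon\cM_\R\to K^\vee_\R$, which is why this scheme coincides with the Gross--Pandharipande--Siebert style of perturbing scattering diagrams, whereas the $(\alpha,\omega)$-embedded trees do not; this is a genuine structural difference between the two theorems, and another reason a naive reduction of one to the other is unlikely to work.
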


In Theorem \ref{main_thm_moz_gps}, we present a version of 
Theorem \ref{main_thm_intro_moz_gps} in which we state more precisely the 
condition that $\beta$ should be a small enough generic perturbation of $\alpha$.

\subsection{Structure of the proof}
\label{section_proof_intro}
The proof of Theorems \ref{main_thm_intro}
and \ref{main_thm_intro_moz_gps} relies on the 
notion of a \emph{scattering diagram}, introduced in \cite{MR2846484}, based on the insights of \cite{KS}, to provide an algebro-geometric understanding of the mirror symmetry phenomenon in physics. To give the rough idea of a scattering diagram, which we elaborate further in \S \ref{section_N_g_scattering}, fix a
nilpotent
$N^+$-graded Lie algebra  $\fg=\bigoplus_{n \in N^+} \fg_n$. There is an associated unipotent
algebraic group $G$ with a bijective exponential map
$\exp:\fg \to G$
defined using the Baker–Campbell–Hausdorff formula. Given this data, a 
$(N^+,\fg)$-scattering diagram is defined as the collection of real codimension $1$ cones in $M_{\RR}$, called walls, which are decorated by wall-crossing automorphisms, that are elements of $G$. 
We focus attention on scattering diagrams relevant to DT and cluster theory, which have wall-crossing automorphism preserving a holomorphic symplectic form as in
\cite{MR2667135, MR2662867, MR3330788, MR3710055,
MR3758151,mou2019scattering,mandel2015scattering,MR4131036, davison2019strong}, and not on the more general scattering diagrams
that have wall-crossing automorphisms preserving a holomorphic volume form, and which appear frequently in the context of mirror symmetry
\cite{MR2846484, gross2016theta, arguz2020higher, keel2019frobenius}.

A codimension $2$ locus in $M_{\RR}$ along which distinct walls intersect is called a \emph{joint}.  A scattering diagram is said to be \emph{consistent} if for any joint, the path-ordered product of all wall-crossing automorphisms of walls that are adjacent to the joint is identity.
It is shown in \cite{KS,MR2846484} that there is an algorithmic prescription for constructing a consistent scattering diagram from the data of an initial set of walls. This prescription is based on inserting new walls, along with wall-crossing automorphisms, which order-by-order decrease the divergence of the path-ordered products of wall-crossing automorphisms around joints from being identity. 

Given a quiver with potential $(Q,W)$, Bridgeland
\cite{MR3710055} constructed from the DT invariants of $(Q,W)$ a consistent scattering diagram in $M_\R$, called the 
\emph{stability scattering diagram}, 
whose initial walls are determined by the attractor DT invariants. 
The stability scattering diagram is a very useful tool to study 
DT invariants of quivers. For example, the transformation properties of DT invariants under mutations of a quiver with potential, conjectured in
\cite{manschot2014generalized}\cite[Conjecture 3.14]{mozgovoy2020attractor},
are proved in \cite[Theorem 4.22]{mou2019scattering} by a study of the corresponding transformation of the stability scattering diagram.

The main technical goal of the paper is to prove Theorems 
\ref{thm_flow_tree_formula_scattering} and \ref{thm_flow_tree_formula_scattering_2}: they are flow tree formulas for consistent scattering diagrams which express as a sum over binary trees the wall-crossing automorphism attached to a general wall in terms of the wall-crossing automorphisms attached to the initial walls. In \S \ref{section_DT_invariants}, we then derive
Theorems \ref{main_thm_intro}
and \ref{main_thm_intro_moz_gps} from the flow tree formulas for scattering diagram applied to the stability scattering diagram.

The proof of Theorems 
\ref{thm_flow_tree_formula_scattering} and \ref{thm_flow_tree_formula_scattering_2} is given in \S \ref{section_proof_scattering} and consists of two parts.
In the first part of the proof, described in 
\S \ref{section_reduction_to_Gamma}, 
we relate the $(N^+,\fg)$-scattering diagrams, which live in 
$M_\R$, to auxiliary $(\cN^+,\fh)$-scattering diagrams which live in 
$\cM_\R$, where $\fh$ is a $\cN^+$-graded Lie algebra constructed from 
$\fg$. In the second part of the proof in \S \ref{section_proof_thm},  
we show that the discrete attractor flow naturally defines a embedding of the binary rooted trees
inside the walls of the auxiliary scattering diagrams in $\cM_\R$. The images of the trees in $\cM_\R$ are embedded graphs in $\cM_\R$ with a balancing condition satisfied at each vertex distinct from the root, that is, essentially 
\emph{tropical disks} in $\cM_\R$
\cite{MR2259922, MR2600995, cps}.
The generic perturbation of either the skew-symmetric form or the position in $\cM_\R$ of the root of the embedded trees guarantees that the vertices of the embedded trees are always contained in double intersections of walls, but never in triple intersections. 
The iteration of the local consistency condition around double intersection of walls determines the contribution of each tree. In the language of DT invariants, this reduces the general wall-crossing formula to an iteration of the much simpler primitive wall-crossing formula.

We note in Remark \ref{rem_gps} that the perturbation of the position in $\cM_\R$ of the root of the trees used in the formulation of 
Theorems \ref{main_thm_intro_moz_gps} and \ref{thm_flow_tree_formula_scattering_2} is related to a way of perturbing scattering diagrams going back to the work of Gross-Pandharipande-Siebert \cite{MR2667135}. However the perturbation of the skew-symmetric form used in the formulation of Theorems \ref{main_thm_intro} and 
\ref{thm_flow_tree_formula_scattering} seems to be a completely new way to 
study scattering diagrams. Thus, most of the paper is focused on the 
study of this perturbation of the skew-symmetric form and on the proof of 
Theorems \ref{main_thm_intro} and 
\ref{thm_flow_tree_formula_scattering}.

\subsection{Related work}

\subsubsection{Operads and wall-crossing}

Very recently, while this paper was being completed, Mozgovoy  \cite{mozgovoy2021operadic} proved using an operadic approach to the wall-crossing formula, a different formula for the coefficients $F_r^{\theta}(\gamma_1,\dots,\gamma_r)$, called the \emph{attractor tree formula} and originally conjectured in \cite{mozgovoy2020attractor},
following \cite{MR4072224,MR4170291}.
The key differences between the 
flow tree formula that we prove in this paper and the attractor tree formula proved in \cite{mozgovoy2021operadic}
are the following:
the flow tree formula involves binary trees, requires a choice of generic perturbation, and is naturally phrased in terms of Lie algebras, whereas the attractor tree formula involves 
general (not necessarily binary) trees, does not require the choice of generic perturbation, and is naturally phrased in terms of associative algebras.
It is currently not known if one of these two formulas implies the other in a simple way.

Both the flow tree formula and the attractor tree formula, formulated precisely and proved for DT invariants of quivers with potentials, are expected to have versions holding more generally in DT theory as long as a global understanding of the space of stability conditions is available.
For example, the flow tree formula and the attractor tree formula play an important role in the conjectural proposal of Alexandrov and Pioline \cite{MR4072224} (see also 
\cite{MR4170291}) for the construction of modular completions for generating series of DT invariants counting coherent sheaves supported on surfaces inside Calabi-Yau 3-folds.

\subsubsection{BPS states}

From a physics perspective, a quiver with potential $(Q,W)$ defines a supersymmetric quantum mechanical system with $4$ supercharges
\cite{MR1952307}
and the (refined) DT invariants are counts of supersymmetric ground states, which often can be identified with supersymmetric indices counting 
BPS particles in $4$-dimensional $\cN=2$ supersymmetric
quantum field theories \cite{MR2219440, MR3087917, MR3268234, MR3106506, cecotti2010r}
and BPS configurations of black holes 
in $4$-dimensional $\cN=2$ string compactifications
\cite{MR1952307, MR3036499, MR2875965,MR3080495, MR2967676, MR3033854, MR2913216, MR2511440, MR3036440}. In particular, the definition of the attractor point, as well as the attractor invariants, is motivated by the attractor mechanism for BPS black holes in 
$\cN=2$ supergravity \cite{MR1360416, MR1402863}. 
The attractor invariants are closely related but not equal in general to the 
\emph{single-centered invariants} \cite{MR3036499}, which are expected 
to count micro-states of a single, spherically symmetric black hole, but whose conceptual definition is still 
mysterious mathematically.
The flow tree formula conjectured by Alexandrov--Pioline \cite{AlexandrovPioline}, that we prove in this paper, is motivated by the split attractor flow picture in $\cN=2$ supergravity
\cite{MR1792870, MR1845419, MR2913216}. The idea that the supergravity attractor flow could be replaced by a discrete attactor flow using sign functions was first suggested by Manschot 
\cite{MR2888006}.

\subsubsection{Tropical curves and mirror symmetry}
In \cite{MR2667135, cps, MR3383167, mandel2015scattering},
the perturbation of scattering diagrams originally introduced 
by Gross-Pandharipande-Siebert
\cite{MR2667135} is used to express general walls of a consistent scattering diagram in terms of the initial walls using sums over tropical curves.
The connection between scattering diagrams and tropical geometry is particularly interesting from the point of view of mirror symmetry and connection with Gromov--Witten theory, as shown in dimension $2$ by Gross-Pandharipande-Siebert
\cite{MR2667135} in genus $0$ and the second author \cite{MR4157555}
in higher genus, and generalized to higher dimensions in the work of the first author with Gross \cite{arguz2020higher}.

However, the point of view adopted in the present paper is different:
the main interest of the flow tree formula  is that it is not written as a sum over tropical curves but as a sum over abstract trees.
The resulting formula is therefore entirely combinatorial, and more amenable to formal manipulations, as exemplified in  
\cite{AlexandrovPioline,MR4072224, MR4170291}. In particular, the flow tree formula can be easily implemented efficiently on a computer, 
as done in \cite{piolinem}.

\subsection{Plan of the paper}
In \S
\ref{section_trees_flows}, we introduce our notation for trees and the discrete attractor flow, and we prove the existence of suitably generic perturbations of the skew-symmetric form. 
In \S \ref{section_scattering}, we first review the reconstruction of consistent scattering diagrams from initial data, and then we state the flow tree formula for scattering diagrams.
The technical heart of the paper is
\S \ref{section_proof_scattering}
in which we prove the flow tree formula for scattering diagrams. Finally, we prove in \S \ref{section_DT_invariants}
the flow tree formula for DT
invariants of quivers with potentials by applying the flow tree formula for scattering diagrams to the stability scattering diagram.

\subsection{Acknowledgments} 
We thank Boris Pioline and Sergey Mozgovoy for exchanges on their works \cite{mozgovoy2020attractor} and 
\cite{mozgovoy2021operadic}. We also thank the anonymous referee for their careful reading and helpful comments which improved the manuscript.

\section{Trees and flows}
\label{section_trees_flows}

In \S \ref{section_trees} and \S\ref{section_skew}, 
we introduce elementary notions on trees and skew-symmetric forms that are used throughout the paper. In \S \ref{section_discrete_flow}, we review the discrete attractor flow following \cite{AlexandrovPioline}. In \S \ref{section_generic}, we prove the existence of sufficiently generic skew-symmetric bilinear forms 
to allow the definition of the flow tree map in \S \ref{section_flow_tree_map}.

Throughout this section we fix a free abelian group $\cN$ of finite rank $r$, and let $\cM \coloneqq \Hom_{\Z}(\mathcal{N},\Z)$ and 
$\cM_\R \coloneqq \cM \otimes_{\Z} \R$. 
We introduce the notation 
$I \coloneqq \{1,\dots,r\}$, we fix a basis $\{e_i\}_{i \in I}$ of $\cN$,
and we denote
\begin{equation} \label{eq:cNplus}
    \cN^+ \coloneqq \big\{ \sum_{i\in I} a_i e_i \,|\, a_i \geq 0,\, \sum_{i\in I} a_i >0 \big\} \,.
\end{equation}
We also fix
a skew-symmetric bilinear form 
$\eta \in \bigwedge^2 \cM$ on $\cN$,
a subset $J \subset I$ of cardinality $|J|$, and let 
\begin{equation}
e_J \coloneqq \sum_{i\in J}e_i\,.
\end{equation}
Finally, for every non-zero $n \in \cN$, we denote by
$n^{\perp}\coloneqq \{ \theta \in \cM_{\R}\,|\, 
\theta(n)=0\}$ the corresponding hyperplane in $\cM_{\R}$.

\subsection{Trees}
\label{section_trees}

\begin{definition}
\label{Def: tree parent child etc}
A \emph{rooted tree} $T$ is a connected tree with a finite number of vertices and edges,
with no divalent vertices, 
together with the additional data of 
a distinguished univalent vertex referred to as the \emph{root}. 
We denote by $V_T$ the set of vertices of $T$, by
$\mathcal{R}_T$ the set with the root for unique element, 
$V_T^\circ$ the set of \emph{interior vertices}, which are vertices of valency bigger than one, and by $V_T^L$ the set of univalent vertices that are not the root, that is the set of \emph{leaves} of $T$. 
An \emph{isomorphism} between 
two rooted trees $T$ and $T'$ is a bijection $\varphi \colon V_T \rightarrow V_{T'}$, which maps adjacent vertices of $T$ to adjacent vertices of $T'$ and the root of $T$ to the root of $T'$.
\end{definition}

\begin{definition}
\label{Def: decoration}
A \emph{J-decorated} rooted tree is a rooted tree $T$ endowed with a decoration of the leaves of $T$
by $\{e_i\}_{i\in J}$, that is, \ a bijection $\psi \colon 
V_T^L \rightarrow \{e_i\}_{i\in J}$.
An \emph{isomorphism} between two $J$-decorated rooted trees $(T,\psi)$ and $(T',\psi')$ is an isomorphism of tree $\varphi \colon V_T \rightarrow V_{T'}$ compatible with the decorations, in the sense that 
$\psi= \psi'  \circ \varphi$.
\end{definition}

\begin{figure}
\center{\scalebox{.6}{\input{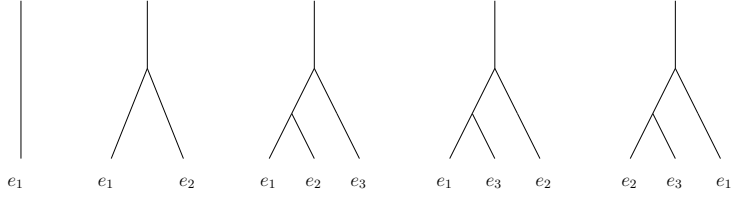}}}
\caption{Decorated binary rooted trees with $\leq 3$ leaves.}
\label{Fig:trees_1}
\end{figure}

\begin{definition}
\label{Def:parent child}
Let $T$ be a rooted tree.
The \emph{parent} of a vertex $v \in V_T \setminus \mathcal{R}_T$ is the unique vertex denoted by $p(v)$ which is adjacent to $v$ and lies on the shortest path between $v$ and the root. A \emph{child} of a vertex $v \in V_T$ is a vertex for which $v$ is a parent, and a \emph{descendant} of $v$ is any vertex which is either the child of $v$ or is (recursively) the descendant of any of the children of $v$. 
\end{definition}

\begin{definition}
\label{def_tree_T}
A rooted tree $T$ is \emph{binary} if the root has exactly one child and each interior vertex has two children. 
\end{definition}

\begin{remark}
\label{Rem: v and v'}
We illustrate in Figure \ref{Fig:trees_1} some decorated binary rooted trees.
Our binary rooted trees are unordered in the sense that we do not fix an order on the set of children of a vertex.
In a binary rooted tree $T$, for every vertex 
$v \in V_T^\circ$, we denote by $\{v',v''\}$ the set of the children of $v$, without specifying an ordering. Nonetheless, for some constructions in what follows it will be sometimes useful to choose an ordering for the children. At any occasion where such a choice is made we will show that the result
of the construction is in fact independent of this choice.
\end{remark}

\begin{lemma} \label{lem_vertices_edges}
Let $T$ be a $J$-decorated binary rooted tree. Then, 
$T$ has $2|J|$ vertices and $2|J|-1$ edges. 
\end{lemma}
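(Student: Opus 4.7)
The plan is to prove this by a direct vertex/edge accounting, using the defining degree conditions of a binary rooted tree together with the tree identity $E=V-1$. The statement is elementary enough that a one-shot combinatorial count is cleaner than induction on $|J|$.

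First I would introduce the notation $L\coloneqq |V_T^L|$, $I\coloneqq |V_T^\circ|$, so that by Definition \ref{def_tree_T} the root contributes $1$ vertex, and the total vertex count is $V=1+I+L$. By the $J$-decoration in Definition \ref{Def: decoration} the map $\psi\colon V_T^L\to\{e_i\}_{i\in J}$ is a bijection, giving $L=|J|$.

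Next I would count edges in two ways. On one hand, since $T$ is a tree the number of edges equals $E=V-1=I+L$. On the other hand, every edge is the unique edge joining a non-root vertex $v$ to its parent $p(v)$ (well-defined by Definition \ref{Def:parent child}); equivalently, the number of edges equals the total number of children. By Definition \ref{def_tree_T}, the root has $1$ child, each interior vertex has $2$ children, and the leaves have $0$ children, so $E=1+2I$.

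Equating the two expressions for $E$ yields $1+2I=I+L$, whence $I=L-1=|J|-1$. Substituting back gives $V=1+(|J|-1)+|J|=2|J|$ and $E=V-1=2|J|-1$, as claimed. The step requiring the most care is simply the bookkeeping that ensures the root contributes exactly one edge (its unique child) and is not itself counted among the interior vertices; but this is immediate from the definitions above, so there is no serious obstacle.
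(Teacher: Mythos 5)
Your proof is correct, but it takes a genuinely different route from the paper's. The paper argues by induction on $|J|$: strip off the leaf decorated by $e_{i_0}$, erase the divalent vertex created, and observe that this removes exactly two vertices and two edges, reducing to a $J'$-decorated tree with $|J'|=|J|-1$; the base case $|J|=1$ is checked directly. You instead do a one-shot degree count, writing $V=1+I+L$ with $L=|J|$, equating two counts of edges ($E=I+L$ from the tree identity and $E=1+2I$ from summing children over the root and interior vertices) to get $I=|J|-1$, and substituting back. Your count is arguably cleaner and more self-contained as a proof of this single statement. The paper's inductive removal of a leaf is slightly more machinery here, but it is the same construction the paper immediately reuses in Lemma \ref{lem_cardinal_T} (the bijection between $\cT_J$ and pairs $(T',E)$ with $T'\in\cT_{J'}$) to compute $|\cT_J|=(2|J|-3)!!$; so the induction is chosen to set up that recurring technique rather than for economy on this particular lemma.
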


\begin{proof}
The proof is by induction on the cardinality $|J|$ of $J$. The result is immediate for $|J|=1$. 
For $|J|>1$, write $J=\{i_0\} \sqcup \{i\}_{i\in |J'|}$
with $|J'|=|J|-1$. Removing from $T$ the leg decorated by $e_{i_0}$, and erasing the resulting 
divalent vertex, we obtain a $J'$-decorated binary rooted tree $T'$. The result follows since $T'$ has two less edges and two less vertices than $T$.
\end{proof}

\begin{lemma} \label{lem_cardinal_T}
The set $\cT_J$ of isomorphism classes of $J$-decorated binary rooted trees is of cardinality $(2|J|-3)!!
=\prod_{k=1}^{|J|-1}(2k-1)$.
\end{lemma}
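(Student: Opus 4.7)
The plan is to proceed by induction on $n \coloneqq |J|$, using Lemma \ref{lem_vertices_edges} as the input that controls the number of edges at each stage. The base case $|J|=1$ is trivial: the only $J$-decorated binary rooted tree consists of the root joined by a single edge to one leaf carrying the unique label, and by convention $(-1)!! = 1$.

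For the inductive step, I would fix $|J|=n \geq 2$, choose once and for all an element $i_0 \in J$, and set $J' \coloneqq J \setminus \{i_0\}$. The inductive hypothesis gives $|\cT_{J'}| = (2n-5)!!$, and by Lemma \ref{lem_vertices_edges} every $T' \in \cT_{J'}$ has exactly $2n-3$ edges. The aim is to establish a bijection
\begin{equation}
\cT_J \;\longleftrightarrow\; \{(T',e) \,:\, T' \in \cT_{J'},\; e \text{ an edge of } T'\},
\end{equation}
which by the inductive hypothesis will immediately yield $|\cT_J| = (2n-3) \cdot (2n-5)!! = (2n-3)!!$.

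The forward map sends $(T',e)$ to the $J$-decorated binary rooted tree obtained by subdividing the edge $e$ with a new interior vertex $w$ and attaching to $w$ a new edge terminating in a leaf decorated by $e_{i_0}$; one checks that $w$ is trivalent (with the correct notion of children dictated by the root) and that the resulting tree is binary. The inverse map, as already used in the proof of Lemma \ref{lem_vertices_edges}, removes the leaf decorated by $e_{i_0}$ together with its adjacent edge, and then erases the resulting divalent vertex by amalgamating its two incident edges into one. These two constructions are clearly mutual inverses on isomorphism classes.

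The only point that needs a brief sanity check — and this is the place where something could in principle go wrong — is that the bijection is well-defined at the level of isomorphism classes. Because the leaves of $T'$ are labeled by the distinct elements $\{e_i\}_{i\in J'}$, every $J'$-decorated binary rooted tree has trivial automorphism group, so choosing an edge $e$ of a representative $T'$ gives a well-defined isomorphism class of pairs, and distinct edges of $T'$ produce non-isomorphic $J$-decorated trees (they differ in which edge of $T'$ is reconstructed by removing the $e_{i_0}$-leaf). This completes the induction and yields the claimed formula $(2|J|-3)!! = \prod_{k=1}^{|J|-1}(2k-1)$.
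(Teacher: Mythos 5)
Your proof is correct and follows essentially the same approach as the paper: induction on $|J|$, establishing the bijection between $\cT_J$ and pairs $(T',E)$ of a $J'$-decorated binary rooted tree $T'$ together with an edge $E$, then invoking Lemma \ref{lem_vertices_edges} for the edge count. The added remark about trivial automorphism groups of decorated trees is a sound clarification of a point the paper leaves implicit.
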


\begin{proof}
The proof is by induction on the cardinality $|J|$ of $J$.
The result is immediate for $|J|=1$.
For $|J|>1$,
write $J=\{i_0\} \sqcup \{i\}_{i\in |J'|}$
with $|J'|=|J|-1$. Removing from $T$ the leg decorated by $e_{i_0}$, we obtain a $J'$-decorated binary rooted tree $T'$ with an added divalent vertex on one of its edges $E$. Conversely, given a $J'$-decorated binary rooted tree $T'$ and an edge $E$ of $T'$, then adding a divalent vertex $v$ in the middle of $E$ and  gluing a leg decorated by 
$e_{i_0}$ to $v$, we obtain a $J$-decorated binary rooted tree. Therefore, we have a bijection between $\cT_J$ and the set of pairs $(T',E)$, where 
$T' \in \cT_{J'}$ and $E$ is an edge of $T'$.
By Lemma \ref{lem_vertices_edges},
a $J'$-decorated binary rooted tree has 
$2|J'|-1$ edges, and so 
$|\cT_J|=(2|J'|-1)|\cT_{J'}|=(2|J|-3)|\cT_{J'}|$.
\end{proof}

\subsection{Skew-symmetric bilinear forms}
\label{section_skew}
We view elements $\omega \in \bigwedge^2 \cM_\R$ as $\R$-valued skew-symmetric bilinear forms on $\cN$, given by
\begin{align}
    \omega \colon \cN \times \cN & \longrightarrow \R \\
    (v_1, v_2) & \longmapsto \omega(v_1,v_2) \nonumber    \,.
\end{align}

\begin{definition}
\label{Def: charge}
For every tree $T \in \cT_J$, and a vertex 
$v \in V_T$, we define an associated element $e_v \in \cN^+$, referred to as the \emph{charge} of $v$ as follows: Let $J_{T,v} \subset J$ be the subset of indices with which the leaves that are descendant to $v$ are labeled, that is, $j \in J_{T,v}$ if and only if the leaf decorated by 
$e_j$ is a descendant of $v$. Then, we set
\begin{equation}
    e_v \coloneqq e_{J_{T,v}}=\sum_{i \in J_{T,v}} e_i \,.
\end{equation}
\end{definition}
Note that if $v$ is the leaf decorated by 
$e_i$, then the associated charge 
$e_v= e_i$.
For $v \in V_T^\circ$,
the sets $J_{T,v'}$ and $J_{T,v''}$ are disjoint, and
we have 
$e_v=e_{v'}+e_{v''}$.
If $v$ is the root of $T$ or the child of the root of $T$, then 
$J_{T,v}=J$ and 
$e_v=e_J$.

\begin{lemma} \label{lem_delta}
For every tree $T \in \cT_J$ and interior vertex $v \in V_T^\circ$, the linear form
\begin{align}
    \wedge^2 \cM_\R &\longrightarrow \R \\
    \omega &\longmapsto \omega(e_{v'},e_{v''}) \nonumber
\end{align}
is not identically zero.
\end{lemma}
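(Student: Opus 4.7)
The plan is to exhibit an explicit element $\omega \in \wedge^2 \cM_\R$ at which the linear functional $\omega \mapsto \omega(e_{v'},e_{v''})$ does not vanish. The key structural input is that since $v$ is an interior vertex of a binary tree and $v', v''$ are its two distinct children, the index sets $J_{T,v'}, J_{T,v''} \subset J$ are both non-empty and, by Definition \ref{Def: charge} and the fact that a vertex of a tree has a unique path to the root, they are \emph{disjoint}.

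Given this disjointness, I would pick any $i \in J_{T,v'}$ and any $j \in J_{T,v''}$ (so in particular $i \neq j$), and let $\{e_k^*\}_{k \in I}$ be the basis of $\cM$ dual to $\{e_k\}_{k \in I}$. Then I set
\begin{equation}
    \omega \coloneqq e_i^* \wedge e_j^* \in \wedge^2 \cM \subset \wedge^2 \cM_\R,
\end{equation}
viewed as a skew-symmetric bilinear form on $\cN$, so that $\omega(e_k, e_\ell) = \delta_{ki}\delta_{\ell j} - \delta_{kj}\delta_{\ell i}$ on basis elements.

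Expanding bilinearly with $e_{v'} = \sum_{k \in J_{T,v'}} e_k$ and $e_{v''} = \sum_{\ell \in J_{T,v''}} e_\ell$ gives
\begin{equation}
    \omega(e_{v'}, e_{v''}) = \sum_{k \in J_{T,v'}} \sum_{\ell \in J_{T,v''}} \omega(e_k,e_\ell).
\end{equation}
Since $J_{T,v'} \cap J_{T,v''} = \emptyset$, the index $j$ does not belong to $J_{T,v'}$ and the index $i$ does not belong to $J_{T,v''}$, so the only surviving term in the double sum is the one with $(k,\ell) = (i,j)$, contributing $\omega(e_i, e_j) = 1$. Hence $\omega(e_{v'},e_{v''}) = 1 \neq 0$, proving that the linear form in question is not identically zero.

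There is no real obstacle here: the only thing to check carefully is the disjointness $J_{T,v'} \cap J_{T,v''} = \emptyset$, which follows because any leaf descendant to $v$ lies below exactly one of the two children of $v$ (otherwise the tree would contain a cycle). The proof is therefore a short direct construction; no genericity argument or induction is needed.
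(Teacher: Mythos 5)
Your proof is correct and rests on the same key observation as the paper's: the disjointness of $J_{T,v'}$ and $J_{T,v''}$, which guarantees that after expanding $\omega(e_{v'},e_{v''})$ bilinearly, the basis functionals $\omega\mapsto\omega(e_{j'},e_{j''})$ with $j'\in J_{T,v'}$, $j''\in J_{T,v''}$ cannot cancel. The paper phrases this as "each basis element appears up to sign at most once, so there are no cancellations," while you dualize and exhibit the explicit witness $\omega=e_i^*\wedge e_j^*$ at which the functional evaluates to $1$; these are two faces of the same elementary linear-algebra argument, and both are complete.
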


\begin{proof}
As $\{e_i\}_{i\in I}$ is a basis of $\cN$, the linear forms $\omega \mapsto \omega(e_i,e_j)$ for
$i,j \in I$ and $i<j$ form a basis of the space of linear forms on 
$\bigwedge^2 \cM_\R$. We have 
\begin{equation} \label{eq_delta}
    \omega(e_{v'},e_{v''}) = \sum_{j' \in J_{T,v'}}
    \sum_{j'' \in J_{T,v''}} \omega(e_{j'},e_{j''}) \,.
\end{equation}
As the sets $J_{T,v'}$ and $J_{T,v''}$ are disjoint, each basis element $\omega \mapsto \omega(e_{j'},e_{j''})$ with $j'<j''$
appears up to sign at most once in the sum \eqref{eq_delta}.
In particular, there are no cancellations and 
$\omega \mapsto \omega(e_{v'},e_{v''})$ is not the zero linear form.
\end{proof}

\begin{proposition} \label{prop_U_open_dense}
Let $U_J \subset \bigwedge^2 \cM_\R$ be the subset of $\omega \in \bigwedge^2 \cM_\R$ such that for every tree 
$T \in \cT_J$ and interior vertex $v \in V_T^\circ$, 
we have $\omega(e_{v'},e_{v''}) \neq 0$. Then, the following holds:
\begin{itemize}
    \item[(i)] $U_J$ is open and dense in $\bigwedge^2 \cM_\R$.
     \item[(ii)] For every $\omega \in U_J$, $T \in \cT_J$ and $v \in V_T^\circ$, we have
$\omega(e_v,e_{v'}) \neq 0$ and $\omega(e_v,e_{v''}) \neq 0$.
    \item[(iii)] For every $J_2 \subset J_1 \subset I$, we have $U_{J_1} \subset U_{J_2}$. 
\end{itemize}
\end{proposition}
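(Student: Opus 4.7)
The plan is to treat the three parts separately, with (i) being a standard open-and-dense argument, (ii) being immediate from skew-symmetry, and (iii) requiring a small tree-surgery construction. I expect (iii) to be the only part demanding any genuine argument.

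For (i), I would observe that $U_J$ is the complement in $\bigwedge^2 \cM_\R$ of the finite union
\[
\bigcup_{T \in \cT_J} \bigcup_{v \in V_T^\circ} \{\omega \in \textstyle\bigwedge^2 \cM_\R \,:\, \omega(e_{v'},e_{v''})=0\}\,,
\]
which is finite by Lemma \ref{lem_vertices_edges} and Lemma \ref{lem_cardinal_T}. By Lemma \ref{lem_delta}, each linear form $\omega \mapsto \omega(e_{v'},e_{v''})$ is not identically zero, so each of the sets in the union is a proper linear subspace. The complement of a finite union of proper linear subspaces is open and dense, which gives (i).

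For (ii), I would simply use $e_v = e_{v'} + e_{v''}$ together with the skew-symmetry of $\omega$: one has $\omega(e_v,e_{v'}) = \omega(e_{v''},e_{v'}) = -\omega(e_{v'},e_{v''})$ and $\omega(e_v,e_{v''}) = \omega(e_{v'},e_{v''})$, so both are nonzero as soon as $\omega \in U_J$.

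The main (and only nontrivial) step is (iii). The plan is to show that every pair $(T_2,u)$ with $T_2 \in \cT_{J_2}$ and $u \in V_{T_2}^\circ$ arises as the restriction of some pair $(T_1,v)$ with $T_1 \in \cT_{J_1}$, $v \in V_{T_1}^\circ$, in such a way that the charges of the children match: $e_{v'}^{T_1}=e_{u'}^{T_2}$ and $e_{v''}^{T_1}=e_{u''}^{T_2}$. Given this, if $\omega \in U_{J_1}$ then $\omega(e_{v'}^{T_1},e_{v''}^{T_1}) \neq 0$, so $\omega(e_{u'}^{T_2},e_{u''}^{T_2}) \neq 0$, which is exactly what is needed to conclude $\omega \in U_{J_2}$. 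To construct $T_1$, I would take the subtree $T_u$ of $T_2$ rooted at $u$, enumerate the extra labels $J_1 \setminus J_{T_2,u}=\{k_1,\dots,k_m\}$, and grow a ``caterpillar'' above $u$: introduce a chain of new interior vertices $v_1,\dots,v_m$ with $v_i$ the parent of $v_{i-1}$ (where $v_0:=u$), giving each $v_i$ an extra child which is a leaf decorated by $e_{k_i}$, and finally attach a new root above $v_m$. Setting $v:=v_0=u$, the subtree of $T_1$ rooted at $v$ is exactly $T_u$, so its descendant labels coincide with those in $T_2$ and the charges match.

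I expect no hidden difficulty beyond checking that the caterpillar construction preserves the binary property (the root has the new vertex $v_m$ as its unique child; each $v_i$ for $i\geq 1$ has two children, namely $v_{i-1}$ and a new leaf; the interior vertices of $T_u$ are unchanged) and that the leaves of $T_1$ are indeed in bijection with $\{e_i\}_{i\in J_1}$, which follows from a count using Lemma \ref{lem_vertices_edges}. The case $J_1=J_2$ is trivial with $T_1=T_2$, and the case $J_1\setminus J_2=\emptyset$ inside the inductive step requires no caterpillar at all, so no degenerate case causes trouble.
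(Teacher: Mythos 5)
Your proof is correct and follows essentially the same approach as the paper's: (i) is the observation that $U_J$ is the complement of a finite union of proper hyperplanes (finiteness from Lemmas \ref{lem_vertices_edges}--\ref{lem_cardinal_T}, properness from Lemma \ref{lem_delta}), (ii) is the identity $\omega(e_v,e_{v'})=\omega(e_{v''},e_{v'})$ coming from $e_v=e_{v'}+e_{v''}$, and (iii) reduces to realizing the relevant data from a $J_2$-decorated tree inside a $J_1$-decorated one. The only point where you go beyond the paper is in (iii): the paper states in one sentence that every $J_2$-decorated binary rooted tree embeds as a subtree of a $J_1$-decorated one, whereas you supply an explicit caterpillar construction; your version does the embedding per interior vertex $u$ of $T_2$ (keeping only the subtree below $u$) rather than embedding $T_2$ as a whole, but this is logically equivalent and causes no issue.
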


\begin{proof}
By Lemma \ref{lem_delta}, $U_J$ is the complement of finitely many hyperplanes in $\bigwedge^2 \cM_\R$. Thus, the statement in (i) follows. To show (ii), observe that as $e_v=e_{v'}+e_{v''}$, we have 
$\omega(e_v,e_{v'})=\omega(e_{v''}, e_{v'})$ and 
$\omega(e_v,e_{v''})=\omega(e_{v'},e_{v''})$. 
Finally, (iii) follows from the fact that every $J_2$-decorated binary rooted tree can be realized as a subtree of a $J_1$-decorated binary rooted tree.
\end{proof}

\subsection{The discrete attractor flow}
\label{section_discrete_flow}
We review the description of the discrete attractor flow introduced in \cite[\S~2.6]{AlexandrovPioline}.

\begin{definition}
\label{Def: the discrete flow}
Fix a tree $T \in \cT_J$, a skew-symmetric bilinear form 
$\omega \in U_J \subset \bigwedge^2 \cM_\R$ and a point
$\alpha \in e_J^{\perp} 
\subset \cM_\R$. We also fix a labeling 
$v', v''$ of the children of the vertices 
$v \in V_T^\circ$.
\emph{The discrete attractor flow} for $(T,\omega,\alpha)$ is the map 
\begin{align}
    \theta_T^{\alpha,\omega}  \colon \mathcal{R}_T \cup V_T^\circ &\longrightarrow \cM_\R \\
     v &\longmapsto  \theta_{T,v}^{\alpha, \omega} \nonumber
\end{align}
defined inductively, following the flow on $T$ starting at the root and ending at the leaves, as follows:
\begin{enumerate}
    \item For the root vertex $v \in \mathcal{R}_T$, we set 
    \begin{equation}  \label{eq_discrete_flow_1}
    \theta_{T,v}^{\alpha,\omega} \coloneqq \alpha
    \end{equation}
    \item For $v \in V_T^\circ$, and a child $v'$ of $v$, we set
    \begin{equation} \label{eq_discrete_flow_2}
        \theta_{T,v}^{\alpha,\omega} = 
        \theta_{T,p(v)}^{\alpha,\omega} 
- \frac{\theta_{T,p(v)}^{\alpha,\omega}(e_{v'})}{
\omega(e_v,e_{v'})} \iota_{e_v} \omega \,.
    \end{equation}
where $p(v)$ is the parent of $v$, and for every $n \in \cN$, $\iota_n \omega = \omega(n,-) \in \cM_{\RR}$.
\end{enumerate}
\end{definition}

Note that since $\omega \in U_{J}$, we have 
$\omega(e_v,e_{v'}) \neq 0$ for every 
$v \in V_T^\circ$ by Proposition \ref{prop_U_open_dense},
and so \eqref{eq_discrete_flow_2} makes sense.

\begin{lemma} \label{lem_attractor}
Using the notations of Defn.\ \ref{Def: the discrete flow}, we have for every 
$v \in V_T^\circ$:
\begin{equation} \label{eq_wall_intersection}
    \theta_{T,v}^{\alpha, \omega} \in e_{v'}^\perp \cap e_{v''}^{\perp} \subset e_v^{\perp} \,,
\end{equation}
and
    \begin{equation} \label{eq_discrete_flow_4}
        \theta_{T,v}^{\alpha,\omega} = 
        \theta_{T,p(v)}^{\alpha,\omega} 
- \frac{\theta_{T,p(v)}^{\alpha,\omega}(e_{v''})}{
\omega(e_v,e_{v''})} \iota_{e_v} \omega \,.
    \end{equation}
In particular, the discrete flow $\theta_T^{\alpha, \omega}$ defined as in \eqref{Def: the discrete flow} is independent of the choice of labeling
$v'$ and $v''$ of children of vertices $v \in V_T^\circ$.
\end{lemma}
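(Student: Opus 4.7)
The plan is to prove the two inclusions $\theta_{T,v}^{\alpha,\omega} \in e_{v'}^{\perp}$ and $\theta_{T,v}^{\alpha,\omega} \in e_{v''}^{\perp}$ simultaneously by induction on the distance of the interior vertex $v$ from the root. Since $e_v = e_{v'} + e_{v''}$, any two of the three vanishings $\theta_{T,v}^{\alpha,\omega}(e_{v'}) = 0$, $\theta_{T,v}^{\alpha,\omega}(e_{v''}) = 0$, $\theta_{T,v}^{\alpha,\omega}(e_v) = 0$ imply the third; this reduces the bookkeeping to checking $\theta_{T,v}^{\alpha,\omega}(e_v) = 0$ (the asserted $e_v^{\perp}$ containment) together with one of the two orthogonalities involving a child.

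First I would handle the base case where $v$ is the unique child of the root. Here $p(v)$ is the root, $\theta_{T,p(v)}^{\alpha,\omega} = \alpha$, and $e_v = e_J$, so $\alpha(e_v) = 0$ by the standing hypothesis $\alpha \in e_J^{\perp}$. Plugging into \eqref{eq_discrete_flow_2} and pairing with $e_{v'}$ gives a direct cancellation yielding $\theta_{T,v}^{\alpha,\omega}(e_{v'}) = 0$, while pairing with $e_v$ uses $\omega(e_v,e_v)=0$ to reduce to $\alpha(e_v) = 0$. For the inductive step, assume the claim at $p(v)$; in particular, since $v$ itself is a child of $p(v)$, the inductive hypothesis gives $\theta_{T,p(v)}^{\alpha,\omega}(e_v) = 0$. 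Pairing \eqref{eq_discrete_flow_2} with $e_v$ then kills the correction term via $\omega(e_v,e_v)=0$, yielding $\theta_{T,v}^{\alpha,\omega}(e_v) = 0$; pairing with $e_{v'}$ gives $\theta_{T,v}^{\alpha,\omega}(e_{v'}) = 0$ by the same cancellation as in the base case; and $\theta_{T,v}^{\alpha,\omega}(e_{v''}) = 0$ then follows from $e_v = e_{v'} + e_{v''}$.

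Once \eqref{eq_wall_intersection} is established, the independence of the labeling reduces to verifying that \eqref{eq_discrete_flow_2} and \eqref{eq_discrete_flow_4} define the same element of $\cM_\R$. Their difference is a scalar multiple of $\iota_{e_v}\omega$ with scalar
\[
\frac{\theta_{T,p(v)}^{\alpha,\omega}(e_{v''})}{\omega(e_v,e_{v''})} - \frac{\theta_{T,p(v)}^{\alpha,\omega}(e_{v'})}{\omega(e_v,e_{v'})}.
\]
Using $e_v = e_{v'} + e_{v''}$ one computes $\omega(e_v,e_{v'}) = -\omega(e_{v'},e_{v''})$ and $\omega(e_v,e_{v''}) = \omega(e_{v'},e_{v''})$, so after putting the fractions over a common denominator the numerator collapses to $\theta_{T,p(v)}^{\alpha,\omega}(e_{v'}+e_{v''}) = \theta_{T,p(v)}^{\alpha,\omega}(e_v)$, which vanishes by the inductive hypothesis (or by the base case). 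Hence the two formulas agree, completing the proof.

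The only mildly delicate point is making the induction clean: the inductive hypothesis at $p(v)$ must supply $\theta_{T,p(v)}^{\alpha,\omega}(e_v) = 0$, which is exactly the orthogonality to a child of $p(v)$, so the induction statement needs to be formulated to simultaneously record the $e_v^{\perp}$ containment at every step. Everything else is a short direct calculation, with Proposition~\ref{prop_U_open_dense}(ii) guaranteeing that all denominators appearing are nonzero.
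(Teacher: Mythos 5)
Your proof is correct and follows essentially the same route as the paper's: induction from root to leaves, using the cancellation in \eqref{eq_discrete_flow_2} when pairing with $e_{v'}$ and the relation $e_v = e_{v'} + e_{v''}$ together with $\theta_{T,p(v)}^{\alpha,\omega}(e_v)=0$ (from $\alpha \in e_J^\perp$ at the base step, or from the inductive hypothesis applied to $p(v)$ thereafter). The only cosmetic difference is that you isolate $\theta_{T,v}^{\alpha,\omega}(e_v)=0$ as the carried invariant and deduce the $e_{v''}$-orthogonality and \eqref{eq_discrete_flow_4} from it, whereas the paper verifies $\theta_{T,v}^{\alpha,\omega}(e_{v''})=0$ by a second direct substitution and obtains \eqref{eq_discrete_flow_4} in passing; the content and the algebra are the same.
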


\begin{proof}
We prove the result inductively following the flow on $T$
starting at the root and ending at the leaves.
If $v \in V_T^\circ$ is the child of the root of $T$, then $\theta_{T,v}^{\alpha,\omega}$
is given by \eqref{eq_discrete_flow_2}.
By \eqref{eq_discrete_flow_1}, we have $\theta_{T,p(v)}^{\alpha, \omega}=\alpha$ and so
\begin{equation}
    \theta_{T,v}^{\alpha,\omega}(e_{v'})=\alpha(e_{v'})
    -\frac{\alpha(e_{v'})}{\omega(e_v,e_{v'})}\omega(e_v,e_{v'})=0 \,.
\end{equation}
On the other hand, as $\alpha \in e_J^{\perp}$,
we have 
$\alpha(e_v)=\alpha(e_J)=0$, and so, using $e_v=e_{v'}+e_{v''}$, we have 
$\alpha(e_{v''})=-\alpha(e_{v'})$. As we also have $\omega(e_v,e_{v'})=-\omega(e_v,e_{v''})$, we finally obtain
\begin{equation}
    \theta_{T,v}^{\alpha,\omega}(e_{v''})=\alpha(e_{v''})
    +\frac{\alpha(e_{v''})}{\omega(e_v,e_{v'})}
    \omega(e_v,e_{v''})=0 \,.
\end{equation}
Similarly, if $v \in V_T^\circ$ is not the root of $T$, then 
$\theta_{T,v}^{\alpha, \omega}$ is given by 
\eqref{eq_discrete_flow_2} and so 
\begin{equation}
    \theta_{T,v}^{\alpha,\omega}(e_{v'})=\theta_{T,p(v)}^{\alpha, \omega}(e_{v'})
    -\frac{\theta_{T,p(v)}^{\alpha, \omega}(e_{v'})}{\omega(e_v,e_{v'})}\omega(e_v,e_{v'})=0 \,.
\end{equation}
By the induction hypothesis, we have 
$\theta_{T,p(v)}^{\alpha, \omega}(e_v)=0$ and so, using $e_v=e_{v'}+e_{v''}$, we have 
$\theta_{T,p(v)}^{\alpha, \omega}(e_{v''})=-\theta_{T,p(v)}^{\alpha, \omega}(e_{v'})$. As we also have $\omega(e_v,e_{v'})=-\omega(e_v,e_{v''})$, we 
finally obtain \eqref{eq_discrete_flow_4} and 
\begin{equation}
    \theta_{T,v}^{\alpha,\omega}(e_{v''})=\theta_{T,p(v)}^{\alpha, \omega}(e_{v''})
    +\frac{\theta_{T,p(v)}^{\alpha, \omega}(e_{v''})}{\omega(e_v,e_{v'})}\omega(e_v,e_{v''})=0 \,.
\end{equation}
\end{proof}

\subsection{Generic skew-symmetric bilinear forms}
\label{section_generic}

Recall that we are fixing a skew-symmetric bilinear form
$\eta \in \bigwedge^2 \cM$ on $\cN$. 

\begin{definition}
\label{def_tree_eta}
We denote
by $\cT_J^\eta$ the set of trees $T \in \cT_J$
such that $\eta(e_{v'},e_{v''}) \neq 0$ where 
$v$ is the child of the root of $T$.
\end{definition}

\begin{definition}
\label{def_J_generic}
A point $\alpha \in \cM_\R$
is \emph{$(J,\eta)$-generic} if
$\alpha \in e_J^{\perp}$ and 
for every tree 
$T \in \cT_J^\eta$, we have $\alpha(e_{v'}) \neq 0$, 
where $v$ is the child of the root of $T$.
\end{definition}

Note that for $T \in \cT_J^\eta$ and $v$ the child of the root of $T$, we have $e_{v'}+e_{v''}=e_v=e_J$, and so, if $\alpha \in e_J^{\perp}$, then $\alpha(e_{v'}) \neq 0$
is equivalent to $\alpha(e_{v''}) \neq 0$.
Equivalently, a point 
$\alpha \in e_J^{\perp}$ is $(J,\eta)$-generic if $\alpha \notin e_{J'}^{\perp}$
for every strict subset $J'$ of $J$ such that $\eta(e_J,e_{J'}) \neq 0$.

\begin{definition} \label{Def: alpha generic delta}
Let $\alpha \in e_J^{\perp}$ be a $(J,\eta)$-generic point. 
A skew-symmetric bilinear form $\omega \in U_{J} \subset \bigwedge^2 \cM_\R$ is called \emph{$(J,\alpha)$-generic}
if for every $T \in \cT_J^\eta$ and $v \in V_T^\circ$, we have 
\begin{equation} \label{eq: good}
    \theta_{T, p(v)}^{\alpha,\omega}(e_{v'})\neq 0 \,\,\,
    \text{and} \,\,\, \theta_{T, p(v)}^{\alpha,\omega}(e_{v''})\neq 0\,.
\end{equation}
We denote by
$U_{J,\alpha} \subset U_J$ the set of $(J,\alpha)$-generic skew-symmetric bilinear forms.
\end{definition}

\begin{lemma} \label{lem_theta_zero}
Using the notations of Defn.\ \ref{Def: alpha generic delta},
for every $T \in \cT_J^\eta$ and $v \in V_T^\circ$, we have 
$\theta_{T,p(v)}^{\alpha,\omega}(e_v)=0$
and $
    \theta_{T,p(v)}^{\alpha,\omega}(e_{v'})=
    -\theta_{T,p(v)}^{\alpha,\omega}(e_{v''})$.
\end{lemma}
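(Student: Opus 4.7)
The plan is to derive both identities essentially as corollaries of Lemma \ref{lem_attractor}, after splitting on the location of the parent $p(v)$. Since the root of $T$ is univalent and $V_T^\circ$ consists of vertices of valency bigger than one, for any $v \in V_T^\circ$ the parent $p(v)$ is either the root itself or another interior vertex, and these two cases must be treated separately.

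If $v$ is the (unique) child of the root, then $p(v) \in \mathcal{R}_T$, so \eqref{eq_discrete_flow_1} gives $\theta_{T,p(v)}^{\alpha,\omega} = \alpha$. Because in a binary rooted tree every leaf descends from the child of the root, one has $e_v = e_J$. The assumption $\alpha \in e_J^\perp$ then yields $\theta_{T,p(v)}^{\alpha,\omega}(e_v) = \alpha(e_J) = 0$. If instead $p(v)$ is itself an interior vertex, then $v$ is one of the two children of $p(v)$, and I can invoke Lemma \ref{lem_attractor} applied at $p(v)$: this says precisely that $\theta_{T,p(v)}^{\alpha,\omega}$ lies in the kernel of evaluation against the charges of both children of $p(v)$, in particular $\theta_{T,p(v)}^{\alpha,\omega}(e_v) = 0$.

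For the second identity, I would simply combine the first with linearity and the relation $e_v = e_{v'} + e_{v''}$ coming from Definition \ref{Def: charge}: this gives
\begin{equation*}
\theta_{T,p(v)}^{\alpha,\omega}(e_{v'}) + \theta_{T,p(v)}^{\alpha,\omega}(e_{v''}) = \theta_{T,p(v)}^{\alpha,\omega}(e_v) = 0,
\end{equation*}
which rearranges to the claimed equality. There is no real obstacle in this argument; the lemma is essentially an unpacking of notation. The only point requiring attention is the clean case split on whether $p(v)$ is the root (where the hypothesis $\alpha \in e_J^\perp$ enters via Definition \ref{def_J_generic}) or an interior vertex (where the recursion from Lemma \ref{lem_attractor} takes over).
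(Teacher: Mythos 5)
Your proof is correct and follows the same route as the paper: reduce the second identity to the first via $e_v = e_{v'}+e_{v''}$, then split on whether $p(v)$ is the root (where $\alpha \in e_J^\perp$ settles it) or an interior vertex (where \eqref{eq_wall_intersection} of Lemma \ref{lem_attractor} applied to $p(v)$ gives $\theta_{T,p(v)}^{\alpha,\omega}(e_v)=0$). The paper presents the reduction first and the case split second, but the content is identical.
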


\begin{proof}
As $e_v=e_{v'}+e_{v''}$, it is enough to show that 
$\theta_{T,p(v)}^{\alpha,\omega}(e_v)=0$. If 
$v$ is the child of the root, then
$\theta_{T,p(v)}^{\alpha,\omega}=\alpha$ by
\eqref{eq_discrete_flow_1}, and so, as $\alpha \in e_J^{\perp}$, we have $\alpha(e_v)=\alpha(e_J)=0$. 
If $v$ is not the child of the root, the result follows by \eqref{eq_wall_intersection} of Lemma \ref{lem_attractor} applied to the parent $p(v)$ of $v$.
\end{proof}

\begin{lemma} \label{lem_rational_function}
Let $\alpha \in e_J^{\perp}$ be 
a $(J,\eta)$-generic point, $T \in \cT_J^\eta$ and $v \in V_T$. 
Denote by $v_0, \dots, v_m$ the unique sequence of vertices 
of $T$ such that $v_0$ is the root of $T$, $v_m=v$, and for every $0 \leq a \leq m-1$, $v_{a+1}$ is a child of 
$v_a$. Then, the following holds.
\begin{itemize}
   \item[(i)] The elements 
$e_{v_0}, \dots, e_{v_m}$ are linearly independent in 
$\cN$.
    \item[(ii)] For every $0 \leq a \leq m-1$ and $0\leq  b \leq m$, the map
\begin{align}
\nonumber
U_{J} &\longrightarrow \R \\
\omega &\longmapsto \theta_{T,v_a}^{\alpha, \omega}(e_{v_b}) \nonumber
\end{align}
is a rational function with $\R$-coefficients, in the variables given by the linear maps 
\begin{align}
\nonumber
U_{J} &\longrightarrow \R \\
\omega &\longmapsto \omega(e_{v_{a'}},e_{v_{b'}}) \nonumber
\end{align}
for $0 \leq a', b' \leq m$ and $\min(a',b') \leq a$.
  \item[(iii)] For every $0 \leq a \leq m-2$, the map
\begin{align}
\nonumber
U_{J} &\longrightarrow \R \\
\omega &\longmapsto \theta_{T,v_a}^{\alpha, \omega}(e_{v_{a+2}}) \nonumber
\end{align}
is not identically zero.   
\end{itemize}
\end{lemma}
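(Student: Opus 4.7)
For (i), note first that the root $v_0$ has $v_1$ as its unique child, so $J_{T,v_0}=J_{T,v_1}=J$ and hence $e_{v_0}=e_{v_1}$; the substantive content of (i) is thus linear independence of the distinct elements $e_{v_1},\dots,e_{v_m}$. The descendant-leaf sets along the path form a strictly decreasing chain $J_{T,v_1}\supsetneq J_{T,v_2}\supsetneq\cdots\supsetneq J_{T,v_m}$, strictness at step $a\to a+1$ coming from the fact that the sibling of $v_{a+1}$ in $T$ has a non-empty set of descendant leaves. Picking witnesses $j_a\in J_{T,v_a}\setminus J_{T,v_{a+1}}$ for $1\le a\le m-1$ and $j_m\in J_{T,v_m}$, the basis vector $e_{j_a}$ appears in $e_{v_b}$ exactly when $b\le a$. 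Any relation $\sum_{b=1}^{m} c_b e_{v_b}=0$ then becomes a lower-triangular system in the $c_b$, forcing $c_b=0$ for all $b$.

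For (ii), I induct on $a$, restricting to $0\le a\le m-1$ (which suffices for (iii)). The base $a=0$ is immediate since $\theta_{T,v_0}^{\alpha,\omega}=\alpha$ does not depend on $\omega$, so $\alpha(e_{v_b})$ is a constant. For the inductive step, apply recursion \eqref{eq_discrete_flow_2} at $v_a$ using the child $v'=v_{a+1}$ (valid since $a+1\le m$; independence from the choice of child is Lemma \ref{lem_attractor}), yielding
\begin{equation*}
  \theta_{T,v_a}^{\alpha,\omega}(e_{v_b})=\theta_{T,v_{a-1}}^{\alpha,\omega}(e_{v_b})-\frac{\theta_{T,v_{a-1}}^{\alpha,\omega}(e_{v_{a+1}})}{\omega(e_{v_a},e_{v_{a+1}})}\,\omega(e_{v_a},e_{v_b}).
\end{equation*}
By induction the two $\theta_{T,v_{a-1}}$-factors are rational in variables $\omega(e_{v_{a'}},e_{v_{b'}})$ of $\min$-index $\le a-1$, while the newly appearing factors $\omega(e_{v_a},e_{v_{a+1}})$ and $\omega(e_{v_a},e_{v_b})$ both have $\min$-index $\le a$. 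Non-vanishing of the denominator on $U_J$ is Proposition \ref{prop_U_open_dense}, so the expression is a well-defined rational function in the required variables.

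For (iii), I induct on $a$. The base $a=0$ is the claim $\alpha(e_{v_2})\ne 0$: since $T\in\cT_J^\eta$ the two children of $v_1$, one of which is $v_2$, pair non-trivially under $\eta$, so $(J,\eta)$-genericity of $\alpha$ (Definition \ref{def_J_generic}) gives $\alpha(e_{v_2})\ne 0$. For the inductive step at $a\ge 1$, specializing the formula above to $b=a+2$ yields
\begin{equation*}
 \theta_{T,v_a}^{\alpha,\omega}(e_{v_{a+2}})=\theta_{T,v_{a-1}}^{\alpha,\omega}(e_{v_{a+2}})-\frac{\theta_{T,v_{a-1}}^{\alpha,\omega}(e_{v_{a+1}})}{\omega(e_{v_a},e_{v_{a+1}})}\,\omega(e_{v_a},e_{v_{a+2}}).
\end{equation*}
By (ii), both $\theta_{T,v_{a-1}}$-factors depend only on variables of $\min$-index $\le a-1$ and so neither involves the variable $\omega(e_{v_a},e_{v_{a+2}})$; the latter is a nonzero linear form on $\bigwedge^2\cM_\R$ because $e_{v_a}$ and $e_{v_{a+2}}$ are linearly independent by (i). Viewed as a function of this single coordinate, the right-hand side is affine with leading coefficient $-\theta_{T,v_{a-1}}^{\alpha,\omega}(e_{v_{a+1}})/\omega(e_{v_a},e_{v_{a+1}})$; the numerator is not identically zero by the inductive hypothesis for (iii), and the denominator is a nonzero rational function, so the leading coefficient, and hence the entire expression, is not identically zero.

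The main obstacle is not localized at a single step but lies in the need to maintain tight bookkeeping of which $\omega$-variables each quantity involves: part (ii) is precisely engineered so that in (iii) the variable $\omega(e_{v_a},e_{v_{a+2}})$ is genuinely fresh relative to everything inherited from $v_{a-1}$, which is what allows the non-vanishing conclusion without any possibility of cancellation. The only external inputs are the combinatorial linear independence from (i) and $(J,\eta)$-genericity of $\alpha$ used to anchor (iii); everything else propagates mechanically along the recursion.
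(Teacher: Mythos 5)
Your proof is correct and follows essentially the same route as the paper's: an induction on depth along the root-to-$v$ path, with careful bookkeeping of which $\omega$-variables each $\theta_{T,v_a}^{\alpha,\omega}(e_{v_b})$ depends on, so that in (iii) the fresh linear form $\omega(e_{v_a},e_{v_{a+2}})$ can be isolated (using the independence from (i)) to force non-vanishing of the leading coefficient. Your two clarifications — that $e_{v_0}=e_{v_1}=e_J$ so that (i) should be read as independence of $e_{v_1},\dots,e_{v_m}$, and that the recursion is cleaner when run from $v_{a-1}$ to $v_a$ using the on-path child $v_{a+1}$ (rather than from $v_a$ to $v_{a+1}$, which would require $v_{a+2}$ to exist) — are sensible fixes to small imprecisions in the paper's terse write-up, but the underlying argument is the same.
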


\begin{proof}
(i) Assume that $\sum_{i=0}^m a_i e_{v_i}=0$ with some $a_{i} \neq 0$. Let 
$i_{\min}$ be the smallest index $i$ such that $a_i \neq 0$. There exists $j \in J_{T,v_{i_{\min}}}$ such that 
$j \notin J_{T,v_i}$ for every $i >i'$, and so we obtain a contradiction.

(ii) We prove this by induction on $a$ from $0$ to $m-1$. 
For $a=0$, $v_0$ is the root of $T$, and so
 by  \eqref{eq_discrete_flow_1}
we have
$\theta_{T,v_0}^{\alpha,\omega}(e_{v_b})= \alpha(e_b)$
which is constant as a function of $\omega$. Now, assume that the result holds for $a \geq 0$ and that $a+1 \leq m-1$.
Then, we have $v_{a+1} \in V_T^\circ$, and so by  \eqref{eq_discrete_flow_2}, 
\begin{equation} 
        \theta_{T,v_{a+1}}^{\alpha,\omega}(e_{v_b}) 
        = 
        \theta_{T,v_a}^{\alpha,\omega}(e_{v_b}) 
- \frac{\theta_{T,v_a}^{\alpha,\omega}(e_{v_{a+2}})}{
\omega(e_{v_{a+1}},e_{v_{a+2}})} \omega(e_{v_{a+1}}, e_{v_b}) \,.
    \end{equation}
By the induction hypothesis, $ \theta_{T,v_a}^{\alpha,\omega}(e_{v_b})$ and 
$\theta_{T,v_a}^{\alpha,\omega}(e_{v_{a+2}})$ are rational functions in the variables $\omega(e_{v_{a'}}, e_{v_{b'}})$ with $\min(a',b') \leq a$ and so in particular with $\min(a',b') \leq a+1$. The only extra variables appearing in $\theta_{T,v_{a+1}}^{\alpha,\omega}(e_{v_b})$ are 
$\omega(e_{v_{a+1}},e_{v_{a+2}})$ and $\omega(e_{v_{a+1}}, e_{v_b})$,
which are both of the form $\omega(e_{v_{a'}}, e_{v_{b'}})$ with $\min(a',b') \leq a+1$. This shows the result for $a+1$.

(iii) First note that by part (i), the elements 
$e_{v_0},\dots, e_{v_m}$ are linearly independent in $\cN$, and so the 
linear forms $\omega \mapsto \omega(e_{v_a}, e_{v_b})$ with 
$a<b$ are linearly independent. 

We prove the result by induction on $a$ from $0$ to $m-2$. 
For $a=0$, $v_0$ is the root of $T$ and we have by  \eqref{eq_discrete_flow_1} that $\theta_{T,v_0}^{\alpha,\omega}(e_{v_2})= \alpha(e_{v_2})$, which is nonzero 
because $T \in \cT_J^\eta$ and $\alpha$ is $(J,\eta)$-generic (see Defn.\ \ref{def_J_generic}).

Assume that the result holds for $a$  and that $a+1 \leq m-2$. We have to show that the result holds for 
$a+1$.
As $a+1 \leq m-2$, we have in particular $v_{a+1} \in V_T^\circ$ and so, by  \eqref{eq_discrete_flow_2},
\begin{equation} 
        \theta_{T,v_{a+1}}^{\alpha,\omega}(e_{v_{a+3}}) 
        = 
        \theta_{T,v_a}^{\alpha,\omega}(e_{v_{a+3}}) 
- \frac{\theta_{T,v_a}^{\alpha,\omega}(e_{v_{a+2}})}{
\omega(e_{v_{a+1}},e_{v_{a+2}})} \omega(e_{v_{a+1}}, e_{v_{a+3}}) \,.
    \end{equation}
By Lemma \ref{lem_rational_function} (ii), $\omega \mapsto \theta_{T,v_a}^{\alpha,\omega}(e_{v_{a+2}})$ and 
$\omega \mapsto \theta_{T,v_a}^{\alpha,\omega}(e_{v_{a+3}})$ are rational functions in the linear forms $\omega \mapsto \omega(e_{v_{a'}}, e_{v_{b'}})$ with $\min(a',b') \leq a$. 
In particular, they are algebraically independent of 
$\omega \mapsto \omega(e_{v_{a+1}},e_{v_{a+2}})$ and 
$\omega \mapsto \omega(e_{v_{a+1}}, e_{v_{a+3}})$. 
On the other hand, by the induction hypothesis, $\omega \mapsto \theta_{T,v_a}^{\alpha,\omega}(e_{v_{a+2}})$ is not identically zero. We conclude that $\omega \mapsto \theta_{T,v_{a+1}}^{\alpha,\omega}(e_{v_{a+3}})$ is not identically zero.
\end{proof}

\begin{proposition}
\label{Prop: def_V}
Let $\alpha \in e_J^{\perp}$ 
be a $(J,\eta)$-generic point.
Then the set $U_{J,\alpha} \subset U_{J} \subset 
\bigwedge^2 \cM_\R$ defined in 
Defn.\ \ref{Def: alpha generic delta} 
is the complement of finitely many algebraic hypersurfaces in 
$U_{J}$. In particular, 
$U_{J,\alpha}$ is open and dense in $U_{J}$, and so in $\bigwedge^2 \cM_\R$.
\end{proposition}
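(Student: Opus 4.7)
The plan is to reduce the proposition directly to Lemma \ref{lem_rational_function}, using Lemma \ref{lem_theta_zero} to cut the amount of checking in half. Indeed, for every $T \in \cT_J^\eta$ and every $v \in V_T^\circ$, Lemma \ref{lem_theta_zero} gives $\theta_{T,p(v)}^{\alpha,\omega}(e_{v'}) = -\theta_{T,p(v)}^{\alpha,\omega}(e_{v''})$, so the two nonvanishing conditions in \eqref{eq: good} are equivalent and it suffices to exclude the vanishing locus of a single rational function on $U_J$ per pair $(T,v)$.

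Fix such a pair $(T,v)$ and pick a child $v'$ of $v$. Consider the path $v_0, v_1, \ldots, v_m$ from the root $v_0$ of $T$ to $v_m = v'$, so that $v_{m-1} = v$ and, whenever $v$ is not the child of the root, $v_{m-2} = p(v)$. By Lemma \ref{lem_rational_function}(ii), $\omega \mapsto \theta_{T,p(v)}^{\alpha,\omega}(e_{v'})$ is a rational function in the linear forms $\omega \mapsto \omega(e_{v_{a'}}, e_{v_{b'}})$. The denominators produced by the inductive formula \eqref{eq_discrete_flow_2} are products of such linear forms $\omega(e_{v_a}, e_{v_{a+1}})$ and $\omega(e_{v_a}, e_{v_{a+2}})$, which are nonzero on $U_J$ by Proposition \ref{prop_U_open_dense}(ii). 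Hence this rational function restricts to a regular function on $U_J$. Applying Lemma \ref{lem_rational_function}(iii) with $a = m-2 \geq 1$ shows that it is not identically zero when $v$ is not the child of the root; when $v$ is the child of the root, the function reduces to the constant $\alpha(e_{v'})$, which is nonzero because $T \in \cT_J^\eta$ combined with the $(J,\eta)$-genericity of $\alpha$ (Definition \ref{def_J_generic}). Therefore, in either case the zero locus in $U_J$ of $\omega \mapsto \theta_{T,p(v)}^{\alpha,\omega}(e_{v'})$ is either empty or a proper algebraic hypersurface in $U_J$.

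To finish, by Lemma \ref{lem_cardinal_T} the set $\cT_J^\eta \subset \cT_J$ is finite, and by Lemma \ref{lem_vertices_edges} each $T \in \cT_J^\eta$ has finitely many interior vertices. So $U_{J,\alpha}$ is the complement in $U_J$ of a finite union of algebraic hypersurfaces. Combining this with the fact that $U_J$ is itself open and dense in $\bigwedge^2 \cM_\R$ by Proposition \ref{prop_U_open_dense}(i), we conclude that $U_{J,\alpha}$ is open and dense in $\bigwedge^2 \cM_\R$.

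I do not anticipate a serious obstacle: essentially all the substance has been packaged into Lemma \ref{lem_rational_function}, whose inductive proof already tracks the rationality and the non-degeneracy of the flow along the path from the root. The only delicate point is to justify that the rational function in $\omega$ has no pole on $U_J$, which is a direct consequence of Proposition \ref{prop_U_open_dense}(ii) once one observes that the only denominators introduced by \eqref{eq_discrete_flow_2} are of the form $\omega(e_v, e_{v'})$ for interior vertices $v$ of $T$.
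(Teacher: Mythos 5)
Your proof is correct and follows essentially the same route as the paper: it reduces everything to Lemma \ref{lem_rational_function}(ii) and (iii) applied to the path from the root to $v'$, and then intersects the finitely many resulting non-vanishing loci. The two additions you make---invoking Lemma \ref{lem_theta_zero} to identify the $v'$ and $v''$ conditions, and checking explicitly that the denominators $\omega(e_{v_{a+1}},e_{v_{a+2}})$ arising from \eqref{eq_discrete_flow_2} are nonvanishing on $U_J$ so the rational function is actually regular there---are small but legitimate elaborations of steps the paper leaves implicit; also note that the separate treatment of the case where $v$ is the child of the root is unnecessary, since Lemma \ref{lem_rational_function}(iii) already covers $a=0$.
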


\begin{proof}
By Lemma \ref{lem_rational_function} (ii) and (iii), for every $T \in \cT_J^\eta$, $v \in V_T^\circ$ and $v'$ child of $v$, the map $\omega \mapsto \theta_{T,p(v)}^{\alpha, \omega}(e_{v'})$
is a  not identically zero rational function. 
Therefore, the set \[\{ \omega \in U_{J}\,|\, \theta_{T,p(v)}^{\alpha, \omega}(e_{v'}) \neq 0 \}\]
is the complement of an algebraic hypersurface in 
$U_{J}$. By definition, 
$U_{J,\alpha}$ is the intersection of the finitely many sets of this form obtained by varying $T$, $v$, and $v'$.
Hence, $U_{J,\alpha}$ is the complement of finitely
many algebraic hypersurfaces in $U_{J}$ and 
is open and dense in $U_{J}$. By Proposition 
\ref{prop_U_open_dense}, $U_{J}$ is open and dense in 
$\bigwedge^2 \cM_\R$, and so it is also the case for 
$U_{J,\alpha}$.
\end{proof}

We end this section in a different direction: instead of fixing 
$\alpha \in e_J^{\perp}$ and looking for $(J,\alpha)$-generic $\omega
\in \bigwedge^2 \cM_\R$, we look for all 
$\alpha \in e_J^{\perp}$ such that the fixed 
$\eta \in \bigwedge^2 \cM_\R$ is $(J,\alpha)$-generic.

\begin{lemma} \label{lem_linear}
Let $T \in \cT_J^\eta$ and $v \in V_T$. 
Denote by $v_0,\dots,v_m$ the unique sequence of vertices of $T$
such that $v_0$ is the root of $T$, $v_m=v$, and for every 
$0 \leq a \leq m-1$, $v_{a+1}$ is a child of $v_a$. 
Then for every $0 \leq a \leq m-2$, the map
\begin{align}
    e_J^{\perp} &\longrightarrow \cM_\R \\
    \alpha &\longmapsto \theta^{\alpha,\eta}_{T,v_a} \nonumber
\end{align}
is linear, and the linear form
\begin{align}
    e_J^{\perp} &\longrightarrow \R \\
    \alpha & \longmapsto \theta^{\alpha,\eta}_{T,v_a}(e_{v_{a+2}}) \nonumber
\end{align}
is not identically zero.
\end{lemma}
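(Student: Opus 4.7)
The plan is to mirror the induction strategy of the proof of Lemma \ref{lem_rational_function}, swapping the roles played by $\alpha$ and the skew-symmetric form.

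For part (i), I proceed by induction on $a$. The base case $a=0$ is immediate since $\theta_{T,v_0}^{\alpha,\eta}=\alpha$. For the inductive step, the recursion \eqref{eq_discrete_flow_2} with $\omega=\eta$ expresses $\theta_{T,v_{a+1}}^{\alpha,\eta}$ as the image of $\theta_{T,v_a}^{\alpha,\eta}$ under the $\R$-linear endomorphism of $\cM_\R$ given by
\begin{equation*}
    \theta \longmapsto \theta - \frac{\theta(e_{v_{a+2}})}{\eta(e_{v_{a+1}},e_{v_{a+2}})}\,\iota_{e_{v_{a+1}}}\eta\,,
\end{equation*}
which depends only on $\eta$ and the tree, not on $\alpha$. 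Combined with the inductive linearity of $\theta_{T,v_a}^{\alpha,\eta}$ in $\alpha$, this yields the claim.

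For part (ii), I would strengthen the induction and prove that for each $0\leq a\leq m-2$ there exist real constants $c_2^{(a)},\dots,c_{a+1}^{(a)}$ (depending only on $\eta$) such that
\begin{equation*}
    \theta_{T,v_a}^{\alpha,\eta}(e_{v_{a+2}}) \;=\; \alpha(e_{v_{a+2}}) + \sum_{b=2}^{a+1} c_b^{(a)}\,\alpha(e_{v_b})\,.
\end{equation*}
The base case $a=0$ reduces to $\alpha(e_{v_2})$. For the inductive step, one expands
\begin{equation*}
    \theta_{T,v_{a+1}}^{\alpha,\eta}(e_{v_{a+3}}) = \theta_{T,v_a}^{\alpha,\eta}(e_{v_{a+3}}) - \frac{\eta(e_{v_{a+1}},e_{v_{a+3}})}{\eta(e_{v_{a+1}},e_{v_{a+2}})}\,\theta_{T,v_a}^{\alpha,\eta}(e_{v_{a+2}})\,,
\end{equation*}
and applies the inductive expansion to each summand on the right: the coefficient of $\alpha(e_{v_{a+3}})$ comes only from the first summand and equals $1$, while all other contributions only involve $\alpha(e_{v_b})$ with $b\leq a+2$.

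To conclude non-vanishing of the linear form $\alpha\mapsto \theta_{T,v_a}^{\alpha,\eta}(e_{v_{a+2}})$ on $e_J^\perp$, I invoke the linear independence of $e_{v_2},\dots,e_{v_m}$ in $\cN/\R e_J$, which follows from the combinatorial argument of Lemma \ref{lem_rational_function}(i) applied to the reduced sequence (noting that $e_{v_0}=e_{v_1}=e_J$). This independence means that the restrictions to $e_J^\perp$ of the functionals $\alpha\mapsto\alpha(e_{v_b})$ for $b=2,\dots,m$ are linearly independent elements of $(e_J^\perp)^*$. Since $\alpha(e_{v_{a+2}})$ enters the expansion above with nonzero coefficient $1$ and the remaining summands only involve $\alpha(e_{v_b})$ with $b\leq a+1$, the form is not identically zero.

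The main obstacle is calibrating the inductive statement for part (ii): a naive induction on non-vanishing alone does not close, since the two summands on the right-hand side of the recursion could a priori conspire to cancel for every $\alpha\in e_J^\perp$. Tracking the leading coefficient in the $\{\alpha(e_{v_b})\}$ expansion is what resolves this and allows us to extract non-vanishing from Lemma \ref{lem_rational_function}(i).
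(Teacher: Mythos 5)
Your proof follows the same strategy as the paper's (the paper's proof is a terse one-sentence sketch that says exactly this: expand $\theta_{T,v_a}^{\alpha,\eta}(e_{v_{a+2}})$ as $\alpha(e_{v_{a+2}})$ plus a linear combination of $\alpha(e_{v_b})$ with $b<a+2$, then invoke Lemma \ref{lem_rational_function}(i)). You are also right to be careful about the quotient by $\R e_J$: since $e_{v_0}=e_{v_1}=e_J$, the correct input from Lemma \ref{lem_rational_function}(i) is the linear independence of $e_{v_1},\dots,e_{v_m}$, which gives independence of $e_{v_2},\dots,e_{v_m}$ modulo $\R e_J$ and hence non-vanishing of the form on $e_J^{\perp}$; your write-up is actually slightly more precise than the paper's on this point.

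However, there is a small calibration error in your stated inductive hypothesis for part (ii). You prove only the statement about $\theta_{T,v_a}^{\alpha,\eta}(e_{v_{a+2}})$, but the inductive step requires controlling $\theta_{T,v_a}^{\alpha,\eta}(e_{v_{a+3}})$ as well: your expansion
\begin{equation*}
\theta_{T,v_{a+1}}^{\alpha,\eta}(e_{v_{a+3}}) = \theta_{T,v_a}^{\alpha,\eta}(e_{v_{a+3}}) - \frac{\eta(e_{v_{a+1}},e_{v_{a+3}})}{\eta(e_{v_{a+1}},e_{v_{a+2}})}\,\theta_{T,v_a}^{\alpha,\eta}(e_{v_{a+2}})
\end{equation*}
has the first summand lying outside the stated hypothesis. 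The fix is to strengthen the hypothesis to: for all $b$ with $a+2\leq b\leq m$, one has $\theta_{T,v_a}^{\alpha,\eta}(e_{v_b}) = \alpha(e_{v_b})+\sum_{c=2}^{a+1}c^{(a)}_{c,b}\,\alpha(e_{v_c})$ for constants depending only on $\eta$ and $T$. Equivalently, one can observe from the recursion that $\theta_{T,v_a}^{\alpha,\eta}=\alpha+\sum_{b=1}^a d_b(\alpha)\,\iota_{e_{v_b}}\eta$ with each $d_b$ a linear form in the $\alpha(e_{v_c})$ for $c\leq b+1$; evaluating at any $e_{v_b}$ with $b\geq a+2$ then produces $\alpha(e_{v_b})$ with coefficient $1$ plus lower-index terms. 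With this strengthening the induction closes cleanly and your conclusion follows as written.
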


\begin{proof}
The result is easily proved by induction on $a$, using 
Lemma \ref{lem_rational_function}(i) and the fact that
the linear form
$\alpha \mapsto \theta_{T,v_a}^{\alpha,\eta}(e_{v_{a+2}})$ is equal to
the sum of the linear form $\alpha \mapsto \alpha(e_{v_{a+2}})$ and of a linear 
combination of the linear forms $\alpha \mapsto \alpha(e_{v_b})$ with $b<a+2$.
\end{proof}

\begin{proposition} \label{prop_generic_point}
Let $V_{J,\eta}$ be the set of $\alpha \in e_J^{\perp} \subset \cM_\R$
such that $\alpha$ is $(J,\eta)$-generic and $\eta$ is $(J,\alpha)$-generic. Then $V_{J,\eta}$ is open and dense in $e_J^{\perp}$.
\end{proposition}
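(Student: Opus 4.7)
The plan is to express $V_{J,\eta}$ as the complement of finitely many proper hyperplanes in $e_J^\perp$, which directly yields that it is open and dense. Both defining conditions for $\alpha \in V_{J,\eta}$ translate into finitely many linear nonvanishing conditions on $\alpha$.

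First I would show that $(J,\eta)$-genericity cuts out an open and dense subset of $e_J^\perp$. By Definition~\ref{def_J_generic}, $\alpha \in e_J^\perp$ is $(J,\eta)$-generic iff $\alpha(e_{v'}) \neq 0$ for every $T \in \cT_J^\eta$ and $v$ the child of the root of $T$. Since $v'$ is a proper descendant of the root, $J_{T,v'} \subsetneq J$, so $e_{v'}$ is not a scalar multiple of $e_J$, and thus the linear form $\alpha \mapsto \alpha(e_{v'})$ restricts to a nonzero linear form on $e_J^\perp$. Its vanishing locus is a proper hyperplane of $e_J^\perp$, and since $\cT_J \supset \cT_J^\eta$ is finite by Lemma~\ref{lem_cardinal_T}, this yields the complement of finitely many proper hyperplanes.

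Next I would show that the condition ``$\eta$ is $(J,\alpha)$-generic'' is also open and dense in $\alpha$. By Definition~\ref{Def: alpha generic delta}, this asks that for every $T \in \cT_J^\eta$ and every $v \in V_T^\circ$, both $\theta_{T,p(v)}^{\alpha,\eta}(e_{v'})$ and $\theta_{T,p(v)}^{\alpha,\eta}(e_{v''})$ are nonzero, where $v', v''$ are the two children of $v$. Let $v_0,\dots,v_m$ be the unique path from the root $v_0$ to $v' = v_m$, so that $v_{m-1}=v$ and $v_{m-2}=p(v)$. Since $v$ is interior, $m \geq 2$, so $a \coloneqq m-2 \geq 0$. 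Applying Lemma~\ref{lem_linear} with this value of $a$ shows that $\alpha \mapsto \theta_{T,v_a}^{\alpha,\eta}(e_{v_{a+2}}) = \theta_{T,p(v)}^{\alpha,\eta}(e_{v'})$ is a linear form on $e_J^\perp$ which is not identically zero; an analogous statement holds for $v''$ by taking instead the path from the root to $v''$. In the edge case $m=2$ where $p(v)$ equals the root, this linear form is just $\alpha \mapsto \alpha(e_{v'})$, so the argument recovers the $(J,\eta)$-genericity condition and remains consistent. Each such nonvanishing condition cuts out the complement of a proper hyperplane in $e_J^\perp$, and the total number of conditions is finite, since $\cT_J^\eta$ is finite and each tree has finitely many interior vertices.

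Intersecting the two open and dense subsets above, $V_{J,\eta}$ is itself the complement of finitely many proper hyperplanes of $e_J^\perp$, hence open and dense. The substantive content needed is already provided by Lemma~\ref{lem_linear}, which guarantees the non-identical-vanishing of the relevant linear forms; no further significant obstacle is expected beyond the bookkeeping indicated above.
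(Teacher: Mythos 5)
Your proof is correct and follows exactly the paper's intended approach: it unwinds Definitions \ref{def_J_generic} and \ref{Def: alpha generic delta} into linear nonvanishing conditions, invokes Lemma \ref{lem_linear} (with $a=m-2$) to establish that the corresponding linear forms are not identically zero on $e_J^{\perp}$, and concludes that $V_{J,\eta}$ is the complement of finitely many hyperplanes. The paper's one-line proof cites precisely Definition \ref{def_J_generic} and Lemma \ref{lem_linear}; your write-up just makes those citations explicit.
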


\begin{proof}
It follows from Defn.\ \ref{def_J_generic}
and Lemma \ref{lem_linear} that $V_{J,\eta}$ is the complement of
finitely many hyperplanes in $e_J^{\perp}$.
\end{proof}

\subsection{The flow tree map}
\label{section_flow_tree_map}

Let $\fh=\bigoplus_{n \in \cN^+} \fh_n$ be a Lie algebra over 
$\Q$ which is $\cN^+$-graded, that is, such that
$[\fh_{n_1},\fh_{n_2}]\subset \fh_{n_1+n_2}$
for every $n_1,n_2 \in \cN^+$.
We say that $\fh$ is \emph{finitely 
$\cN^+$-graded} if its 
\emph{support} 
$\Supp(\fh) \coloneqq \{ n \in  \cN^+ \,|\, \fh_n \neq 0 \}$ is finite.
Note that a finitely $\cN^+$-graded Lie algebra is nilpotent.
In what follows, we fix $\fh=\bigoplus_{n \in \cN^+} \fh_n$
a finitely $\cN^+$-graded Lie algebra.
For every $x \in \R-\{0\}$, we denote by 
$\sgn(x)$ the sign of $x$ defined as follows: 
\begin{equation} \sgn(x) =  \begin{cases} 
      1 & \mathrm{if} ~ x>0, ~ \mathrm{and}  \\
      -1 & \mathrm{if} ~ x<0.
   \end{cases}
\end{equation}
\begin{definition} \label{def_flow_tree_map}
Fix a $(J,\eta)$-generic point
$\alpha \in e_J^{\perp} \subset \cM_\R$,
a skew-symmetric bilinear form
$\omega \in U_{J,\alpha}\subset \bigwedge^2 \cM_\R$
as in Defn.\ \ref{Def: alpha generic delta}, 
a tree $T \in \cT_J^\eta$, and for every 
interior vertex $v \in V_T^\circ$ a labeling $v'$ and 
$v''$ of the children of $v$. 
We define a multilinear map 
\begin{equation}
    A_{J,T,v}^{\alpha, \omega} 
    \colon \prod_{i\in J_{T,v}} \fh_{e_i} \longrightarrow 
    \fh_{e_v} 
\end{equation}
for every 
$v \in V_T^L \cup V_T^\circ$
inductively, following the flow on $T$ starting at the leaves and ending at the root, as follows:
\begin{enumerate}
    \item If $v \in V_T^L$, that is, if 
    $v$ is a leaf of $T$ decorated by some $e_i$, we define 
$A_{J,T,v}^{\alpha, \omega} \colon \fh_{e_i} \rightarrow \fh_{e_i}$ as the identity map.
    \item If $v \in V_T^\circ$, we set 
\begin{equation} \label{eq:epsilon}
\epsilon^{\alpha, \omega}_{T,v} \coloneqq 
- \frac{\sgn 
   (\theta_{T,p(v)}^{\alpha,\omega}(e_{v'}))
   + \sgn ( \omega(e_{v'}, e_{v''}))}{2}
   \in \{0,1,-1\}\,,
\end{equation}    
and
\begin{equation} \label{eq_flow_tree_map}
A_{J,T,v}^{\alpha, \omega} 
    \coloneqq \epsilon^{\alpha, \omega}_{T,v} \, [A_{J,T,v'}^{\alpha, \omega},
   A_{J,T,v''}^{\alpha, \omega}] \,,
\end{equation}
where $[A_{J,T,v'}^{\alpha, \omega},
A_{J,T,v''}^{\alpha, \omega}]$ is the composition of the maps 
$
    A_{J,T,v'}^{\alpha, \omega} 
    \colon \prod_{j\in J_{v'}} \fh_{e_j} \longrightarrow 
    \fh_{e_{v'}}$ and 
$    A_{J,T,v''}^{\alpha, \omega} 
    \colon \prod_{j\in J_{v''}} \fh_{e_j} \longrightarrow 
    \fh_{e_{v''}} $
with the Lie bracket
$    [-,-]
    \colon  \fh_{e_{v'}} \times \fh_{e_{v''}} 
    \longrightarrow 
    \fh_{e_{v'}+e_{v''}}=\fh_{e_v} \,. $
\end{enumerate}
\end{definition}
Note that by the definition of $U_J$,
we have $\omega(e_{v'},e_{v''}) \neq 0$
for every $v \in V_T^\circ$. Moreover, by Defn.\ \ref{Def: alpha generic delta} 
of 
$U_{J,\alpha}$, we have 
$\theta_{T,p(v)}^{\alpha, \omega}(e_{v'}) \neq 0$. Hence, both of the signs $\sgn(\omega(e_{v'}, e_{v''}))$ and $\sgn (\theta_{T,p(v)}^{\alpha,\omega}(e_{v'}))$
in  \ref{eq:epsilon} make sense.

\begin{lemma}
Using the notations of Defn.\ \ref{def_flow_tree_map}, for every $v \in V_T^\circ$, we have 
\begin{equation} 
A_{J,T,v}^{\alpha, \omega} 
    = -\frac{\sgn 
   (\theta_{T,p(v)}^{\alpha,\omega}(e_{v''}))
   + \sgn ( \omega(e_{v''}, e_{v'}))}{2} \, [A_{J,T,v''}^{\alpha, \omega},
   A_{J,T,v'}^{\alpha, \omega}]\,.
\end{equation}
In particular, the map $A_{J,T,v}^{\alpha, \omega}$ is independent of the choice of the labeling of the children $v'$
and $v''$ of $v \in V_T^\circ$.
\end{lemma}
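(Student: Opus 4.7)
The plan is to verify directly that swapping the labels $v'$ and $v''$ of the two children of $v$ negates both the scalar factor $\epsilon_{T,v}^{\alpha,\omega}$ and the Lie bracket, so the two sign flips cancel and $A_{J,T,v}^{\alpha,\omega}$ is unchanged. The identity displayed in the lemma is then simply the expression of the formula \eqref{eq_flow_tree_map} after relabeling.

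First I would handle the scalar factor. By definition
\[
\epsilon_{T,v}^{\alpha,\omega} = -\frac{\sgn(\theta_{T,p(v)}^{\alpha,\omega}(e_{v'})) + \sgn(\omega(e_{v'},e_{v''}))}{2}.
\]
Since $\omega$ is skew-symmetric, $\omega(e_{v'},e_{v''}) = -\omega(e_{v''},e_{v'})$, so the signs of these two numbers are opposite. For the first term, I would invoke Lemma \ref{lem_theta_zero} (when $v$ is not the child of the root) or directly the hypothesis $\alpha \in e_J^\perp$ together with $e_v = e_J$ (when $v$ is the child of the root): in either case $\theta_{T,p(v)}^{\alpha,\omega}(e_v) = 0$. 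Combined with $e_v = e_{v'} + e_{v''}$, this gives $\theta_{T,p(v)}^{\alpha,\omega}(e_{v'}) = -\theta_{T,p(v)}^{\alpha,\omega}(e_{v''})$, and both quantities are nonzero by the assumption $\omega \in U_{J,\alpha}$, so their signs are well-defined and opposite. Hence swapping $v'$ and $v''$ negates each of the two $\sgn$ terms, and therefore negates $\epsilon_{T,v}^{\alpha,\omega}$.

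Next I would handle the bracket factor: by antisymmetry of the Lie bracket on $\fh$,
\[
[A_{J,T,v'}^{\alpha,\omega}, A_{J,T,v''}^{\alpha,\omega}] = -[A_{J,T,v''}^{\alpha,\omega}, A_{J,T,v'}^{\alpha,\omega}].
\]
Combining the two sign flips, the product $\epsilon_{T,v}^{\alpha,\omega}\,[A_{J,T,v'}^{\alpha,\omega}, A_{J,T,v''}^{\alpha,\omega}]$ is invariant under exchanging $v'$ and $v''$, which is exactly the displayed equation of the lemma, and gives the stated independence of $A_{J,T,v}^{\alpha,\omega}$ from the labeling at $v$.

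The only subtlety, and the one point I would state carefully, is that this argument is purely local at the vertex $v$: the inductive definition uses the labelings at all interior vertices simultaneously, so one must observe that the argument above applies independently at each $v \in V_T^\circ$. Since the maps $A_{J,T,v'}^{\alpha,\omega}$ and $A_{J,T,v''}^{\alpha,\omega}$ depend only on the choices made at vertices strictly below $v$, an easy induction following the flow from the leaves to the root (using the base case that $A_{J,T,v}^{\alpha,\omega}$ is the identity for $v$ a leaf, which involves no choice) upgrades the local independence at each vertex into global independence of $A_{J,T,v}^{\alpha,\omega}$ from the entire system of labelings.
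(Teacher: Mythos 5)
Your proof is correct and follows essentially the same route as the paper's: skew-symmetry of the Lie bracket, skew-symmetry of $\omega$, and Lemma \ref{lem_theta_zero} to see that $\sgn(\theta_{T,p(v)}^{\alpha,\omega}(e_{v'}))$ and $\sgn(\theta_{T,p(v)}^{\alpha,\omega}(e_{v''}))$ are opposite. The closing observation about upgrading local independence at each vertex to global independence of the full labeling system by induction from the leaves is a welcome extra level of care that the paper leaves implicit.
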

\begin{proof}
Since the Lie bracket is skew-symmetric, we have
$[A_{J,T,v''}^{\alpha, \omega},
   A_{J,T,v'}^{\alpha, \omega}]=
   -[A_{J,T,v'}^{\alpha, \omega},
   A_{J,T,v''}^{\alpha, \omega}]$. 
Moreover, since $\omega$ is skew-symmetric, we have 
$\sgn (\omega(e_{v''},e_{v'}))=-\sgn (\omega(e_{v'},e_{v''}))$.
Finally, by Lemma \ref{lem_theta_zero}, we have 
$\sgn (\theta_{T,p(v)}^{\alpha,\omega}(e_{v''}))
= - \sgn (\theta_{T,p(v)}^{\alpha,\omega}(e_{v'}))$.
\end{proof}

\begin{definition} \label{def_final_A}
For every $(J,\eta)$-generic $\alpha \in e_J^{\perp}$,
$\omega \in U_{J,\alpha}$ and 
$T \in \cT_J^\eta$, let
\begin{equation}
    A_{J,T}^{\alpha, \omega} \colon \prod_{i \in J} \fh_{e_i}
    \longrightarrow \fh_{e_J} 
\end{equation}
be the linear map associated to $T$, defined by $A_{J,T}^{\alpha, \omega} \coloneqq 
A_{J,T, v}^{\alpha, \omega}$, where 
$v$ is the child of the root of $T$. 
For every $(J,\eta)$-generic $\alpha \in e_J^{\perp}$
and $\omega \in U_{J,\alpha}$, we define the \emph{flow tree map} $A_{J}^{\alpha, \omega}$ \emph{with initial point $\alpha$},
by summing over all the trees in $\cT_J^\eta$:
\begin{equation} \label{eq:sum_trees}
A_J^{\alpha, \omega} \coloneqq 
   \sum_{T \in \cT_J^\eta} A_{J,T}^{\alpha, \omega}.
\end{equation}
\end{definition}

\section{Scattering diagrams}
\label{section_scattering}
In \S \ref{section_N_g_scattering}, we review the
concept of consistent scattering diagram, mainly following 
\cite{MR3710055,MR3330788,MR3758151}.
In \S
\ref{section_initial_scattering}, we recall the notion of initial data for scattering diagrams.
Finally, in \S \ref{section_universality}, we make explicit the universal nature of the reconstruction of consistent scattering diagrams from their initial data.

\subsection{Consistent scattering diagrams}
\label{section_N_g_scattering}

Throughout this section, we fix a free abelian group $N$ of finite rank $\ell$, and let 
$M \coloneqq \Hom(N,\Z)$ and $M_\R \coloneqq M \otimes_\Z \R$.
We fix a basis $\{s_i\}_{1 \leq i \leq \ell}$ of $N$, and we denote
\begin{equation}
    N^+ \coloneqq \big\{ \sum_{i=1}^\ell a_i s_i \,|\, a_i \in \Z_{ \geq 0},\, \sum_{i=1}^\ell a_i >0 \big\} \,.
\end{equation}
For every $n \in N-\{0\}$,
we denote 
    $n^{\perp}\coloneqq \{ \theta \in M_{\R}\,|\, 
\theta(n)=0\}$, 
and for every subset 
$\fd \subset M_{\R}$, we denote 
$\fd^{\perp} \coloneqq \{ n \in N^+ \,|\, \theta(n)=0 \,\,\, \text{for every} \,\, \theta \in \fd \}$.
Finally, we fix a
finitely
$N^+$-graded Lie algebra  $\fg=\bigoplus_{n \in N^+} \fg_n$ 
over $\Q$, that is, a $N^+$-graded Lie algebra whose support
\begin{equation}\label{eq_support}
\Supp(\fg)\coloneqq \{ n \in  N^+ \,|\, \fg_n \neq 0 \}
\end{equation}
is a finite set. In particular, $\fg$ is a nilpotent Lie algebra.

For us, a  \emph{cone} in $M_\R$ is a closed, convex, rational, polyhedral cone in $M_\R$, that is, a subset of 
$M_\R$ of the form 
\begin{equation}
\sigma = \big\{ \sum_{i=1}^q \lambda_i m_i \,|\, \lambda_i \in \R_{\geq 0} \big\}\,,\,\,\, m_1, \dots, m_q \in M.
\end{equation}
By definition, the \emph{codimension} of a cone is the codimension of the subspace of $M_\R$ it spans. 
A \emph{wall} is a cone of codimension $1$ and a 
\emph{joint} is a cone of codimension $2$. 
If $\fd$ is a wall in $M_\R$, we denote by $n_\fd$ the unique primitive element in $N^+ \cap \fd^{\perp}$, referred to as the \emph{normal vector to the wall}.
A \emph{face} of a cone $\sigma$ is a subset of the form
$\sigma \cap n^{\perp}$
where $n \in N$ satisfies $\theta(n) \geq 0$ for all $\theta \in \sigma$. 
Note that every face of a cone is itself a cone, and every
intersection
of faces of a given cone is also a face.
Finally, a \emph{cone complex} in $M_\R$ is a finite collection 
$\fS$ of cones in $M_{\R}$, such that
 any face of a cone in $\fS$ is also a cone in 
 $\fS$,
and the intersection of any two cones in $\fS$ is a face of each.

\begin{definition} \label{def_cone}
For every finite subset $P \subset N^+$, we denote by $\fS_P$ the cone complex in $M_\R$ whose cones are indexed by partitions
$P=P_+ \sqcup P_0 \sqcup P_-$ with $P_0$ non-empty
and given by 
\[\sigma(P_+,P_0,P_-)
    \coloneqq \{ \theta \in M_\R \,|\, \theta(n)=0
    \,\, \text{for} \,\, n\in P_0\,, \pm \theta(n) \geq 0
    \,\, \text{for} \,\, n \in P_{\pm} \}\,.\]
We denote by $\Wall_P$ the set of walls in $\fS_P$.
\end{definition}

In what follows, we take for the finite set $P \subset N^+$ in Defn.\ \ref{def_cone}
the support $\Supp(\fg) \subset N^+$ of the Lie algebra $\fg$ defined by 
\eqref{eq_support}.

\begin{definition}
\label{def_scattering_diagram}
A \emph{$(N^+, \fg)$-scattering diagram} 
is a map 
\[ \phi \colon \Wall_{\Supp(\fg)} \longrightarrow 
\fg\] 
with the property that \[
\phi(\fd) \in \bigoplus_{n \in \Z_{\geq 1}n_\fd} \fg_{n}
\subset \fg\] for every $\fd \in \Wall_{\Supp(\fg)}$.
For every $n \in \Z_{\geq 1}n_\fd$, the projection of $\phi(\fd)$
on $\fg_{n}$ is denoted by $\phi(\fd)_{n}$.
\end{definition}

\begin{definition}
A smooth path $\fp \colon [0,1] \rightarrow M_\R$ 
is \emph{$\fg$-generic} if
\begin{enumerate}
\item the endpoints $\fp(0)$ and $\fp(1)$ do not lie in any wall $\fd  \in \Wall_{\Supp(\fg)}$,
\item $\fp$ does not meet any cone of $\fS_{\Supp(\fg)}$ of codimension $>1$,
\item  all intersections of $\gamma$ with walls
$\fd \in \Wall_{\Supp(\fg)}$ are transversal.
\end{enumerate}
\end{definition}
Note that, given a $\fg$-generic path $\fp \colon [0,1] \rightarrow M_\R$  there is a finite set of points 
\begin{equation} 0<t_1<\ldots<t_k<1
\end{equation}
for which $\fp(t_i)$ lies in 
$\bigcup_{\fd \in \Wall_{\Supp(\fg)}} \fd$, and 
for each of these points $t_i$ there is a unique wall 
$\fd_i \in \Wall_{\Supp(\fg)}$ such that 
$\fp(t_i) \in \fd_i$.
Given a $(N^+,\fg)$-scattering diagram 
$\phi$ and a $\fg$-generic path 
$\fp \colon [0,1] \rightarrow M_\R$, 
we define the \emph{path-ordered product along $\fp$ of $\phi$} by 
\begin{equation}\label{eq:path_automorphism}
\Psi_{\fp, \phi} 
\coloneqq 
\exp(\epsilon_k \phi(\fd_k))\cdot 
\exp(\epsilon_{k-1} \phi(\fd_{k-1}))
\dots 
\exp(\epsilon_2 \phi(\fd_2)) 
\cdot \exp(\epsilon_1 \phi(\fd_1)) \in G \,,
\end{equation}
where $\epsilon_i \in \{ \pm 1\}$ is the sign of the derivative of 
$t \mapsto -\fp(t)(n_{\fd_i})$ at $t=t_i$, $G$ is the unipotent group associated to the nilpotent Lie algebra $\fg$, and $\exp \colon \fg \rightarrow G$ is the exponential map.

\begin{definition} \label{def_consistency}
A $(N^+,\fg)$-scattering diagram $\phi$ is \emph{consistent}
if $\Psi_{\fp_1,\phi}=\Psi_{\fp_2,\phi}$ for every two $\fg$-generic paths $\fp_1$ and $\fp_2$ with the same endpoints.
\end{definition}
Note that Defn.\ \ref{def_consistency} is equivalent to the definition of the consistency mentioned in the introduction, which requires the composition of all wall-crossing automorphisms on walls adjacent to a given joint to be identity. We set 
$M^+_\R \coloneqq \{ \theta \in M_{\R}\,|\, \theta(n)>0 \,\,
\forall n\in N^+ \}$
and 
$M^-_\R \coloneqq \{ \theta \in M_{\R}\,|\, \theta(n)<0 \,\,
\forall n\in N^+ \}\,.$
The cone complex 
$\fS_{\Supp(\fg)}$
is disjoint from 
$M^+_\R$ and $M^-_\R$. Therefore, 
if $\phi$ is a consistent $(N^+,\fg)$-scattering diagram,
we can consider the element
$\Psi_{\fp,\phi}$ of $G$, where $\fp$ is
a $\fg$-generic path with initial point in
$M^+_\R$
and final point in
$M^-_\R$. By consistency of $\phi$, 
$\Psi_{\fp,\phi}$
is independent of the particular choice of $\fp$, and we set
$\Psi_\phi \coloneqq \Psi_{\fp,\phi} \in G$. 

\begin{proposition} 
\label{prop_scattering}
The map $\phi \mapsto \Psi_\phi$ is a bijection 
between consistent 
$(N^+,\fg)$-scattering diagrams and elements of the group 
$G$.
\end{proposition}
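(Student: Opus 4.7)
The plan is to prove the bijection by induction on a filtration of $\fg$ by ideals. Define $|n| := \sum_i a_i$ for $n = \sum_i a_i s_i \in N^+$, and set $\fg_{>k} := \bigoplus_{|n|>k}\fg_n$ and $\fg_{=k} := \bigoplus_{|n|=k}\fg_n$. Since $[\fg_{n_1}, \fg_{n_2}] \subset \fg_{n_1+n_2}$, each $\fg_{>k}$ is a Lie ideal, and it vanishes for $k$ large since $\Supp(\fg)$ is finite. The quotient $\fg^{\leq k}:=\fg/\fg_{>k}$ is a finitely $N^+$-graded Lie algebra with unipotent group $G^{\leq k}$; a consistent $(N^+,\fg)$-scattering diagram $\phi$ projects to a consistent $(N^+,\fg^{\leq k})$-scattering diagram, compatibly with the map $\phi \mapsto \Psi_\phi$. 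It therefore suffices to prove the analogous bijection for each $(N^+,\fg^{\leq k})$ separately, by induction on $k$, and then observe that $\fg = \fg^{\leq k}$ for $k$ large.

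The base case $k=1$ is handled directly: the support is contained in $\{s_i\}_i$, so every wall of $\fS_{\Supp(\fg^{\leq 1})}$ lies in some hyperplane $s_i^\perp$, and $\fg^{\leq 1}$ is abelian (brackets of two degree-one elements lie in $\fg_{>1}$). Consistency at each joint forces the wall function to take a common value $\phi_i \in \fg_{s_i}$ on all walls in $s_i^\perp$, and then $\Psi_\phi = \exp(\sum_i \phi_i)$, yielding a bijection with $G^{\leq 1}$. For the inductive step, fix $g \in G^{\leq k}$ with image $\bar g \in G^{\leq k-1}$; the inductive hypothesis gives a unique consistent $\bar\phi$ with $\Psi_{\bar\phi} = \bar g$. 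Lift $\bar\phi$ to a (possibly inconsistent) $(N^+,\fg^{\leq k})$-scattering diagram $\tilde\phi$ by zeroing its degree-$k$ components; then $g \cdot \Psi_{\tilde\phi}^{-1}$ lies in $\exp(\fg_{=k})$ because both reduce to $\bar g$ modulo $\fg_{>k-1}$. Because $\fg_{=k}$ is a central ideal in $\fg^{\leq k}$, both the failure of consistency of $\tilde\phi$ at the joints and the discrepancy $g \cdot \Psi_{\tilde\phi}^{-1}$ decompose, degree component by degree component, as linear problems in the abelian group $\fg_{=k}$. Adding degree-$k$ corrections to the wall functions of $\tilde\phi$, one solves these linear problems simultaneously, producing a consistent $\phi$ with $\Psi_\phi = g$. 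Injectivity is automatic from the same setup: two consistent lifts of $\bar\phi$ with the same $\Psi$ would differ by a solution of the homogeneous linear problem, which is trivial.

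The main obstacle is the linear-algebraic verification that, once $\bar\phi$ is fixed, the degree-$k$ correction is uniquely determined by $g$ and compatible with all joint relations at once. Concretely, one must identify the space of degree-$k$ corrections to $\tilde\phi$ with a certain vector space on which the joint relations and the total path-ordered product define linear maps, and show that the joint system determines a unique correction for every target $g$. Centrality of $\fg_{=k}$ in $\fg^{\leq k}$ is what turns the a priori Baker--Campbell--Hausdorff relations into genuinely linear equations, and therefore reduces the inductive step to the already established abelian case at degree $k$. Once this reduction is made explicit, the bijection follows by assembling the per-degree bijections through the filtration.
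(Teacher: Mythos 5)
The paper disposes of this proposition by citation (Bridgeland's Proposition~3.3, GHKK's Theorem~2.1.6, Gross--Siebert's Theorem~1.17), so your attempt to give a self-contained inductive proof is a genuinely different route. Your filtration by $\fg_{>k}$ and the degree-by-degree strategy is indeed the standard Kontsevich--Soibelman/Gross--Siebert order-by-order construction, and your abelian base case is correct.

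However, there is a real gap in the inductive step: you assert that once centrality of $\fg_{=k}$ in $\fg^{\leq k}$ turns the joint constraints into linear equations, ``the inductive step reduces to the already established abelian case at degree $k$.'' This is true for \emph{uniqueness} (the difference of two consistent lifts of $\bar\phi$ with the same $\Psi$ is a consistent $\fg_{=k}$-valued diagram with $\Psi=1$, hence zero). It is \emph{not} true for \emph{existence}. The abelian case tells you the set of consistent lifts, if nonempty, is a torsor under $\fg_{=k}$-diagrams and hence maps bijectively onto $G_{=k}$; it does not tell you the set is nonempty. Concretely, you must show that a single degree-$k$ correction $\delta_0$ can be chosen to simultaneously cancel all the joint defects $E_\fj = \log \Psi_{\fp_\fj,\tilde\phi} \in \fg_{=k}$, and this is an inhomogeneous linear system that is a priori overdetermined (many joints share walls). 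Its solvability is precisely the nontrivial ``scattering lemma,'' usually proved by showing the $E_\fj$ satisfy a cocycle compatibility along codimension-3 strata and then using simple connectivity to find a potential. Your phrase ``one solves these linear problems simultaneously'' elides exactly this step, and it is the crux of the whole proposition.

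A second, more technical gap: the cone complex $\fS_{\Supp(\fg^{\leq k})}$ genuinely refines $\fS_{\Supp(\fg^{\leq k-1})}$ as $k$ grows, since new support vectors create new hyperplanes $n^\perp$ that subdivide existing walls and joints. Your lifting ``zero the degree-$k$ component'' therefore requires specifying how a wall of the fine complex inherits a value from the coarse complex (several fine walls can lie inside one coarse wall, and some fine walls lie inside coarse cones of higher codimension). This is routine but needs to be stated; as written, $\tilde\phi$ is not a well-defined $(N^+,\fg^{\leq k})$-scattering diagram.

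Similarly, the uniqueness reduction uses the abelian case for $\fg_{=k}$ with $k>1$, whereas you only proved the abelian case in degree $1$. The argument does extend to any abelian finitely $N^+$-graded Lie algebra, but that extension (walls in $\bar n^\perp$ for all primitive $\bar n$ in the support, not just basis vectors) should be stated since it is what you actually invoke.
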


\begin{proof}
In the setting of scattering diagrams as cone complexes, this is exactly Proposition 3.3 of \cite{MR3710055}. In the setting of scattering diagrams as set of walls, this result is originally Theorem 2.1.6 of \cite{MR3330788} 
(see also Theorem 1.17 of \cite{MR3758151}). Note that 
Proposition 3.3 of \cite{MR3710055} in fact shows that these two possible points of view on scattering diagrams are in fact equivalent.
\end{proof}

\subsection{Initial data for scattering diagrams}
\label{section_initial_scattering}
From now on, we assume given a real-valued skew-symmetric bilinear form
$\langle -,-\rangle$ on $N$
such that the finitely $N^+$-graded Lie algebra $\fg=\bigoplus_{n \in N^+} \fg_n$
satisfies
\begin{equation}
\label{eq:skew_lie}
[\fg_{n_1}, \fg_{n_2}]=0 \,\,\,\text{as soon as} \,\,\, 
\langle n_1, n_2 \rangle=0\,.
\end{equation}
In this section we review the notion of initial data for a 
$(N^+,\fg)$-scattering diagram.

For every primitive $\overline{n} \in N^+$, we have a direct sum decomposition 
$\fg=\fg_{\overline{n},+} \oplus \fg_{\overline{n},0} \oplus \fg_{\overline{n},-}$
of $\fg$ into Lie subalgebras 
\begin{equation}
    \fg_{\overline{n},+} \coloneq \bigoplus_{\substack{n \in N^+ \\ 
    \langle \overline{n},n \rangle>0}}\fg_{n} \,,\,\,\,
     \fg_{\overline{n},0} \coloneq \bigoplus_{\substack{n \in N^+ \\ 
    \langle \overline{n},n\rangle=0}}\fg_{n} \,,\,\,\,
     \fg_{\overline{n},-} \coloneq \bigoplus_{\substack{n \in N^+ \\ 
    \langle \overline{n},n \rangle <0}}\fg_{n} \,.
\end{equation}
It follows that, denoting by $G_{\overline{n},+} \coloneq \exp(\fg_{\overline{n},+})$,
$G_{\overline{n},0} \coloneq \exp(\fg_{\overline{n},0})$,
$G_{\overline{n},-} \coloneq \exp(\fg_{\overline{n},-})$ the corresponding subgroups 
of $G$, every element $g \in G$ can be written uniquely as 
a product $g=g_{\overline{n},+} g_{\overline{n},0} g_{\overline{n},-}$ with 
$g_{\overline{n},+} \in G_{\overline{n},+}$, $g_{\overline{n},0} \in G_{\overline{n},0}$,
$g_{\overline{n},-} \in G_{\overline{n},-}$.
We have a further decomposition
$\fg_{\overline{n},0} = \fg_{\overline{n},0}^{\parallel} \oplus \fg_{\overline{n},0}^{\perp}$, 
where 
\begin{equation}
    \fg_{\overline{n},0}^{\parallel} \coloneq \bigoplus_{n \in \Z_{\geq 1} \overline{n}} \fg_{n} \,,\,\,\, 
    \fg_{\overline{n},0}^{\perp} \coloneq 
    \bigoplus_{\substack{n \in N^+ \\ 
    \langle \overline{n},n \rangle=0 \\ 
    n\notin \Z_{\geq 1}\overline{n}}} \fg_{n}\,.
\end{equation}
If $n_1+n_2=k\overline{n}$ with $\langle \overline{n}, n_1 \rangle=0$ and
$\langle \overline{n},n_2\rangle=0$, then $\langle n_1, n_2\rangle=0$
and so $[\fg_{n_1},\fg_{n_2}] = 0$ by \eqref{eq:skew_lie}. 
In particular, we have
$[\fg_{\overline{n},0}, \fg_{\overline{n},0}^{\perp}] \subset \fg_{\overline{n},0}^{\perp}$. Hence, $\fg_{\overline{n},0}^{\perp}$ is a
Lie ideal in $\fg_{\overline{n},0}$ and so the 
subgroup $G_{\overline{n},0}^{\perp}
\coloneq \exp (\fg_{\overline{n},0}^{\perp})$
is normal. We denote by 
\begin{equation}
    \pi_{\overline{n},0} \colon G_{\overline{n},0} \longrightarrow 
    G_{\overline{n},0}/G_{\overline{n},0}^{\perp}=G_{\overline{n},0}^{\parallel}
\end{equation}
the quotient group morphism, where 
$G_{\overline{n},0}^{\parallel}
    \coloneqq \exp(\fg_{\overline{n},0}^{\parallel})$. 
Given $g=g_{\overline{n},+}g_{\overline{n},0}g_{\overline{n},-}$, set 
$g_{\overline{n},0}^{\parallel} \coloneq \pi_{\overline{n},0}(g_{\overline{n},0})$.
This defines a map 
\begin{align}
    \pi_{\overline{n}} \colon G &\longrightarrow G_{\overline{n},0}^{\parallel} \\
    g &\longmapsto g_{\overline{n},0}^{\parallel} \,. \nonumber
\end{align}

\begin{proposition} \label{prop_initial}
The map 
\begin{align}
    \pi \colon G &\longrightarrow \prod_{\substack{\overline{n} \in N^+ \\ n \,\, \text{primitive}}} G_{\overline{n},0}^{\parallel} \\
 g &\longmapsto (\pi_{\overline{n}}(g))_{\overline{n}} \nonumber
\end{align}
is a bijection.
\end{proposition}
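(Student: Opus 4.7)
\emph{Plan.} I prove $\pi$ is a bijection by a filtration argument with associated-graded identity. Let $\deg \colon N^+ \to \Z_{>0}$ be the monoid homomorphism $\sum_i a_i s_i \mapsto \sum_i a_i$, and set $K \coloneqq \max_{n \in \Supp(\fg)} \deg(n)$, which is finite since $\Supp(\fg)$ is. For $k \geq 1$ the subspace $\fg_{\geq k} \coloneqq \bigoplus_{\deg n \geq k} \fg_n$ is a Lie ideal of $\fg$ with $\fg_{\geq 1} = \fg$ and $\fg_{\geq K+1} = 0$, and $G_{\geq k} \coloneqq \exp(\fg_{\geq k})$ is a normal subgroup of $G$. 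This gives a finite descending filtration of $G$; parallel filtrations exist on each $G_{\overline{n},0}^{\parallel}$ and on the product $\prod_{\overline{n}} G_{\overline{n},0}^{\parallel}$ (which is effectively finite, as $G_{\overline{n},0}^{\parallel}$ is trivial unless some $j\overline{n}$ with $j \geq 1$ lies in $\Supp(\fg)$).

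\emph{Step 1 (filtration compatibility).} I first check $\pi(G_{\geq k}) \subseteq \prod_{\overline{n}} (G_{\overline{n},0}^{\parallel})_{\geq k}$. For $g = \exp(x) \in G_{\geq k}$, the vector-space decompositions $x = x_{\overline{n},+} + x_{\overline{n},0} + x_{\overline{n},-}$ and $x_{\overline{n},0} = x_{\overline{n},0}^{\parallel} + x_{\overline{n},0}^{\perp}$ respect the grading, so each summand stays in $\fg_{\geq k}$. The group-theoretic factorization $g = g_{\overline{n},+} g_{\overline{n},0} g_{\overline{n},-}$ writes $g_{\overline{n},*} = \exp(y_{\overline{n},*})$ where $y_{\overline{n},*}$ differs from $x_{\overline{n},*}$ by Baker--Campbell--Hausdorff corrections built from iterated brackets of elements of $\fg_{\geq k}$; since $\fg_{\geq k}$ is ideal-closed these corrections also lie in $\fg_{\geq k}$. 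Finally $\pi_{\overline{n},0}$ is, under $\exp$, the linear projection $\fg_{\overline{n},0} \twoheadrightarrow \fg_{\overline{n},0}^{\parallel}$ (using that $\fg_{\overline{n},0}^{\perp}$ is a Lie ideal of $\fg_{\overline{n},0}$), proving the claim.

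\emph{Step 2 (identity on associated graded).} For $k \geq 1$ the inclusion $[\fg_{\geq k}, \fg_{\geq k}] \subseteq \fg_{\geq 2k} \subseteq \fg_{\geq k+1}$ forces $G_{\geq k}/G_{\geq k+1}$ to be abelian, and $\exp$ identifies it with $\fg_k \coloneqq \bigoplus_{\deg n = k} \fg_n$. Every $n \in N^+$ factors uniquely as $n = j\overline{n}$ with $\overline{n}$ primitive and $j \geq 1$, so $\fg_k = \bigoplus_{\overline{n}} (\fg_{\overline{n},0}^{\parallel} \cap \fg_k)$, matching the associated graded of the target. The crucial claim is that $\mathrm{gr}^k \pi$ is the identity: modulo $\fg_{\geq k+1}$ the BCH corrections from Step 1 vanish, so $y_{\overline{n},*} \equiv x_{\overline{n},*}$, and $\pi_{\overline{n},0}(g_{\overline{n},0}) \equiv \exp(x_{\overline{n},0}^{\parallel})$; summing over primitive $\overline{n}$ reproduces $x$ itself.

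\emph{Step 3 (induction and main obstacle).} Induction on $k = 1, \ldots, K+1$ shows that each induced map $\pi^{(k)} \colon G/G_{\geq k} \to \prod_{\overline{n}} G_{\overline{n},0}^{\parallel}/(G_{\overline{n},0}^{\parallel})_{\geq k}$ is a bijection. The base case $k=1$ is trivial (both sides are singletons). For the inductive step, $\pi^{(k+1)}$ is a map of fibered sets over the bijection $\pi^{(k)}$: on each fiber, a direct computation (using that $[x,u] \in \fg_{\geq k+1}$ whenever $x \in \fg$ and $u \in \fg_{\geq k}$) shows that the fiber map, after translating by the base point, equals $\mathrm{gr}^k \pi$, which is a bijection by Step 2. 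Hence $\pi^{(k+1)}$ is a bijection, and taking $k = K+1$ yields the proposition. The main technical obstacle is Step 2: carefully justifying that the nonabelian group decomposition $g = g_{\overline{n},+} g_{\overline{n},0} g_{\overline{n},-}$ and the subsequent quotient $\pi_{\overline{n},0}$ both reduce modulo $\fg_{\geq k+1}$ to the corresponding linear vector-space operations, so that $\mathrm{gr}^k \pi$ is literally the identity on $\fg_k$.
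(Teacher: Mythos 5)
The paper does not actually prove this Proposition; it simply cites Proposition~3.3.2 of Gross--Hacking--Keel--Kontsevich and Proposition~1.20 of Gross--Pandharipande--Siebert, so you are supplying an argument the paper omits. Your proof is correct, and it is the standard filtration/associated-graded argument that those references also use: filter $G$ by the normal subgroups $G_{\geq k} = \exp(\fg_{\geq k})$, check $\pi$ respects the filtration, note $G_{\geq k}/G_{\geq k+1}$ is abelian and central (since $[\fg,\fg_{\geq k}]\subseteq\fg_{\geq k+1}$), identify it via $\exp$ with the degree-$k$ piece of the Lie algebra, observe that all BCH corrections vanish modulo $\fg_{\geq k+1}$ so $\mathrm{gr}^k\pi$ becomes the identity under the degree-preserving direct-sum decompositions, and climb the (finite) filtration by induction. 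The one place I would tighten the exposition is Step~1: instead of appealing to ``BCH corrections built from iterated brackets of elements of $\fg_{\geq k}$,'' it is cleaner to observe that the ideal $\fg_{\geq k}$ inherits its own direct-sum decomposition $\fg_{\geq k} = (\fg_{\geq k}\cap\fg_{\overline{n},+})\oplus(\fg_{\geq k}\cap\fg_{\overline{n},0})\oplus(\fg_{\geq k}\cap\fg_{\overline{n},-})$ into Lie subalgebras, so the triple factorization of any $g\in G_{\geq k}$ already exists inside $G_{\geq k}$; by uniqueness of the factorization in $G$ this forces $g_{\overline{n},\pm}, g_{\overline{n},0}\in G_{\geq k}$ with no need to unwind the BCH series. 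Likewise in Step~3, making the centrality of $G_{\geq k}/G_{\geq k+1}$ explicit (which you implicitly use to distribute the triple factorization across $gh$ for $h\in G_{\geq k}$) would make the fiber computation fully transparent. These are presentation points; the mathematics is sound.
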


\begin{proof}
This is Proposition 3.3.2 of
\cite{MR3330788}. 
See also Proposition 1.20 of \cite{MR3758151}.
\end{proof}

\begin{definition}
\label{Def: initial data}
For every $n \in N^+$, the \emph{initial data} 
$I_{\phi,n}$ of a consistent $(N^+,\fg)$-scattering diagram 
$\phi$ is the projection on $\fg_n$ of 
\begin{equation}
    \log(\pi_{\overline{n}}(\Psi_\phi)) \in \fg_{\overline{n},0}^{\parallel}
    = \bigoplus_{n' \in \Z_{\geq 1} \overline{n}} \fg_{n'}\,,
\end{equation}
where $\overline{n}$ is the unique primitive element of 
$N^+$ such that $n \in \Z_{\geq 1} \overline{n}$, and 
$\Psi_\phi$ is the element of $G$ attached to $\phi$ as in 
Proposition \ref{prop_scattering}.
\end{definition}

\begin{proposition} \label{prop_scattering_from_initial}
The map 
$\phi \mapsto (I_{\phi,n})_{n \in N^+}$ is a bijection between 
equivalence classes of consistent 
$(N^+,\fg)$-scattering diagrams 
and elements of $\fg=\bigoplus_{n \in N^+} \fg_n$. In other words,
for every 
$(\mathcal{I}_n)_{n \in N^{+}} \in \fg=\bigoplus_{n \in N^+}\fg_n$, 
there exists a unique consistent 
$(N^+,\fg)$-scattering diagram $\phi$ with initial data 
$(I_{\phi,n})_{n \in N^+}=(\mathcal{I}_n)_{n \in N^+}$.
\end{proposition}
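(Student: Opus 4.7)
The plan is to factor the assignment $\phi \mapsto (I_{\phi,n})_{n \in N^+}$ as a composition of three bijections, two of which are already established in the paper and the third of which follows from the skew-symmetry hypothesis \eqref{eq:skew_lie}.

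First, by Proposition \ref{prop_scattering}, the map $\phi \mapsto \Psi_\phi$ identifies the set of consistent $(N^+,\fg)$-scattering diagrams with the unipotent group $G$. Second, by Proposition \ref{prop_initial}, the map $g \mapsto (\pi_{\overline{n}}(g))_{\overline{n}}$ identifies $G$ with the product $\prod_{\overline{n}} G_{\overline{n},0}^{\parallel}$ over the primitive elements $\overline{n} \in N^+$. So what remains is to identify $\prod_{\overline{n}} G_{\overline{n},0}^{\parallel}$ with $\fg$.

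For this third step I would, for each primitive $\overline{n}$, show that $\log \colon G_{\overline{n},0}^{\parallel} \to \fg_{\overline{n},0}^{\parallel}$ is a bijection. The point is that the Lie subalgebra $\fg_{\overline{n},0}^{\parallel} = \bigoplus_{k\geq 1} \fg_{k\overline{n}}$ is actually \emph{abelian}: for any $k_1,k_2 \geq 1$, the classes $k_1 \overline{n}$ and $k_2 \overline{n}$ are collinear, hence $\langle k_1 \overline{n}, k_2 \overline{n}\rangle = 0$, so \eqref{eq:skew_lie} gives $[\fg_{k_1\overline{n}}, \fg_{k_2\overline{n}}] = 0$. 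Since $\fg$ is nilpotent, $\exp$ and $\log$ are mutually inverse set-theoretic bijections on the whole of $\fg$ and $G$, and their restriction to the abelian subalgebra and subgroup is a genuine isomorphism of abelian groups. This ensures in particular that the graded projections $I_{\phi,n}$ are well-defined and that they uniquely recover $\log(\pi_{\overline{n}}(\Psi_\phi))$.

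Finally, I would combine these ingredients using the canonical decomposition
\[
\prod_{\overline{n}\text{ primitive}} \fg_{\overline{n},0}^{\parallel} \;=\; \bigoplus_{\overline{n}\text{ primitive}} \bigoplus_{k\geq 1} \fg_{k\overline{n}} \;=\; \bigoplus_{n \in N^+} \fg_n \;=\; \fg,
\]
where the product collapses to a direct sum because $\Supp(\fg)$ is finite and because every $n \in N^+$ is uniquely of the form $k\overline{n}$ with $\overline{n}$ primitive and $k \geq 1$. Composing the three bijections yields the claim. There is no real obstacle: everything beyond the two cited propositions reduces to the abelianness of $\fg_{\overline{n},0}^{\parallel}$, which is an immediate consequence of \eqref{eq:skew_lie}.
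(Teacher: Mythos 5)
Your proof is correct and follows essentially the same route as the paper, which simply asserts the result as an immediate consequence of Propositions \ref{prop_scattering} and \ref{prop_initial}. You supply the extra detail — identifying $G_{\overline{n},0}^{\parallel}$ with $\fg_{\overline{n},0}^{\parallel}$ via $\exp/\log$ and observing that the product over primitive $\overline{n}$ collapses to $\fg$ — that the paper leaves implicit; the abelianness observation is a nice clarification, though for the bijection claim alone it suffices that $\exp\colon\fg\to G$ is bijective on the nilpotent $\fg$.
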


\begin{proof}
This is an immediate consequence of Propositions \ref{prop_scattering} and 
\ref{prop_initial}.
\end{proof}

The next Proposition \ref{prop_initial_scattering} describes how to read the initial data $I_{\phi,n}$ of a consistent $(N^+,\fg)$-scattering diagram 
$\phi$ from the walls.

\begin{proposition} \label{prop_initial_scattering}
Let $\phi$ be a consistent $(N^+,\fg)$-scattering diagram,
$n \in N^+$ and $\overline{n}$ the unique primitive 
element of $N^+$ such that $n \in \Z_{\geq 1} \overline{n}$. 
For every wall $\fd
\in \Wall_{\Supp(\fg)}$
with $n_\fd=\overline{n}$ and
containing the 
\emph{attractor point} $\langle n,-\rangle \in M_\R$ for $n$, we have 
\begin{equation}
    \phi(\fd)_n=I_{\phi,n}\,.
\end{equation}
\end{proposition}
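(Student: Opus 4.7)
The plan is to compute $\pi_{\overline{n}}(\Psi_\phi)$ along a $\fg$-generic path $\fp$ from $M^+_\R$ to $M^-_\R$ chosen so that it crosses the wall $\fd$ at a single point arbitrarily close to $\langle n,-\rangle$, and then extract the $\fg_n$-component of $\log\pi_{\overline{n}}(\Psi_{\fp,\phi})$. First I would observe that $\langle n,-\rangle$ lies in $\overline{n}^\perp$: since $\langle-,-\rangle$ is skew-symmetric and $n\in\Z_{\geq 1}\overline{n}$, we have $\langle n,\overline{n}\rangle=0$, so $(\langle n,-\rangle)(\overline{n})=0$. By hypothesis, the wall $\fd\subset\overline{n}^\perp$ contains $\langle n,-\rangle$.

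Next, I would exploit the following local structure near $\langle n,-\rangle$: any wall $\fd'\in\Wall_{\Supp(\fg)}$ containing a point sufficiently close to $\langle n,-\rangle$ has normal satisfying $\langle n,n_{\fd'}\rangle=0$, hence $\langle\overline{n},n_{\fd'}\rangle=0$. Walls of this type with $n_{\fd'}=\overline{n}$ contribute wall-crossings in $G_{\overline{n},0}^{\parallel}$; all remaining walls have $n_{\fd'}$ primitive with $n_{\fd'}\neq\overline{n}$, so $kn_{\fd'}\in\overline{n}^\perp\cap N^+\setminus\Z\overline{n}$ for every $k\geq 1$, and their wall-crossings lie in $G_{\overline{n},0}^{\perp}$. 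I would then choose $\fp$ crossing $\overline{n}^\perp$ transversally at a unique point in the relative interior of $\fd$ close to $\langle n,-\rangle$, which yields a factorization
\begin{equation*}
    \Psi_\phi \;=\; A\cdot \exp(\phi(\fd))\cdot B\,,
\end{equation*}
in which each factor of $A$ and $B$ lies in one of $G_{\overline{n},+}$, $G_{\overline{n},-}$, $G_{\overline{n},0}^{\perp}$, because the corresponding walls all have primitive normal not proportional to $\overline{n}$.

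The key algebraic input is that $\fg_{\overline{n},0}^{\parallel}$ is central in $\fg_{\overline{n},0}$: for $x\in\fg_{k\overline{n}}$ and $y\in\fg_{n'}$ with $\langle\overline{n},n'\rangle=0$, one has $\langle k\overline{n},n'\rangle=0$, so $[x,y]=0$ by \eqref{eq:skew_lie}. Consequently, $\exp(\phi(\fd))$ commutes with $G_{\overline{n},0}$, and conjugation by $\exp(\phi(\fd))$ preserves each of the subgroups $G_{\overline{n},\pm}$. Using this to move $\exp(\phi(\fd))$ into the central position of the triangular decomposition $\Psi_\phi=g_{\overline{n},+}g_{\overline{n},0}g_{\overline{n},-}$ yields
\begin{equation*}
    \pi_{\overline{n}}(\Psi_\phi)\;=\;\exp(\phi(\fd))\cdot\pi_{\overline{n},0}\bigl((A'B')_0\bigr)\,,
\end{equation*}
where $A',B'$ are the appropriate conjugates of $A,B$ and $(A'B')_0$ denotes the $G_{\overline{n},0}$-factor of the triangular decomposition of $A'B'$.

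The main obstacle will be to show that $\pi_{\overline{n},0}((A'B')_0)$ is trivial at degree $n$ in $G_{\overline{n},0}^{\parallel}$: commutators between $G_{\overline{n},+}$- and $G_{\overline{n},-}$-factors can a priori land in $\fg_{\overline{n},0}^{\parallel}$, and must be shown not to contribute in degree $n$. I would handle this by induction on the finite $N^+$-grading of $\fg$, the inductive hypothesis together with Proposition~\ref{prop_scattering_from_initial} identifying such commutator contributions with corrections to already-determined lower-order data; the location of the crossing at the attractor point is what forces the degree-$n$ correction to vanish. Projecting $\log(\exp(\phi(\fd)))=\phi(\fd)$ onto $\fg_n$ then gives $\phi(\fd)_n=I_{\phi,n}$, and the same reasoning applied to any other wall through $\langle n,-\rangle$ with normal $\overline{n}$ yields the independence of $\fd$.
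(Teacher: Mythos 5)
The paper's own proof of this proposition is a one-line citation to Theorem~1.21-(1) of the Gross--Hacking--Keel--Siebert reference, so your direct argument via path-ordered products is a genuinely different route.

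Your algebraic reductions are all sound: $\langle n,-\rangle\in\overline{n}^{\perp}$ since $\langle n,\overline{n}\rangle=0$; walls with normal not proportional to $\overline{n}$ contribute factors lying in exactly one of $G_{\overline{n},+}$, $G_{\overline{n},-}$, $G_{\overline{n},0}^{\perp}$; $\fg_{\overline{n},0}^{\parallel}$ is central in $\fg_{\overline{n},0}$; and $\Ad_{\exp(\phi(\fd))}$ preserves $G_{\overline{n},\pm}$, so you do obtain a factorization of the form $\Psi_\phi = g_+\,(g_0''\exp(\phi(\fd)))\,g_-$ with $g_0''$ built from BCH commutators of the $A$- and $B$-factors. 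All of that is correct.

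The gap is at exactly the step you flag as ``the main obstacle.'' You assert that $\log\bigl(\pi_{\overline{n},0}(g_0'')\bigr)$ vanishes in degree $n$ because the crossing is at the attractor point, and propose to establish this by ``induction on the finite $N^+$-grading together with Proposition~\ref{prop_scattering_from_initial}.'' But this is where the entire content of the proposition lives, and the argument is never made. Note that your observation about walls near $\langle n,-\rangle$ having normals orthogonal to $n$ is indeed the right ingredient --- it implies that wall-crossings at walls containing the attractor point commute in degree $n$, since $[\fg_{m_1},\fg_{m_2}]\cap\fg_n = 0$ whenever $m_1\in\Z_{\geq 1}n_{\fd'}$ with $\langle n,n_{\fd'}\rangle = 0$ and $m_1 + m_2 = n$. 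However, this controls only the local picture near $\langle n,-\rangle$; the factors of $A$ and $B$ contributed by walls far from the attractor point (which your path unavoidably crosses on its way to $M_\R^{\pm}$) need satisfy no such orthogonality, and their BCH commutators can and do land in $\fg_n\subset\fg_{\overline{n},0}^{\parallel}$. Showing that these contributions cancel is not a purely algebraic fact about the group decomposition: it requires the inductive structure of the Kontsevich--Soibelman/Gross--Siebert scattering algorithm --- specifically that the ``outgoing'' walls added at each order lie in $\fj - \R_{\geq 0}\langle n_{\fd},-\rangle$ for some perpendicular joint $\fj$, hence cannot contain the attractor point $\langle n,-\rangle$ (this is precisely the argument appearing in the proof of Proposition~\ref{prop_scattering_from_initial_explicit} and is the content of the cited theorem of Gross--Hacking--Keel--Siebert). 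Without importing that structure, your induction has no base from which to force the degree-$n$ correction to vanish, so the proof as written is incomplete.
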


\begin{proof}
This follows from Theorem 1.21-(1) of \cite{MR3758151}.
\end{proof}

Note that in the context of Proposition \ref{prop_initial_scattering} there are in general several walls $\fd$ with $n_\fd=\overline{n}$ and containing the attractor point $\langle n, -\rangle$. Proposition \ref{prop_initial_scattering}
implies in particular that $\phi(\fd)_n$ does not depend on the choice of $\fd$.

\subsection{Universality of the reconstruction of scattering diagrams from initial data} \label{section_universality}

The next proposition shows that the elements $\phi(\fd) \in \fg$ assigned to walls $\fd \in \Wall_{\Supp(\fg)}$ by a consistent 
$(N^+,\fg)$-scattering diagram $\phi$
are determined by the initial data 
$(I_{\phi,n})_{n \in N^+}$
via universal formulas.

\begin{definition}
A finite multiset $\Gamma=\{\gamma_i\}_{1 \leq i\leq r}$ of elements of $N^+$
is a finite unordered collection $\gamma_1,\dots,\gamma_r$ of elements of 
$N^+$ where multiple occurrences of elements are allowed.
For every $n \in N^+$, the \emph{multiplicity} 
$m_\Gamma(n) \in \Z_{\geq 0}$ of $n$ in $\Gamma$
is the number of occurrences of $n$ in $\Gamma$.
Given a multiset $\Gamma$, we denote by 
$\overline{\Gamma}$ the set of $n \in N^+$
such that $m_\Gamma(n) \neq 0$.
The set of finite multisets of elements of 
$N^+$ is denoted by $\mult(N^+)$.
\end{definition}

\begin{proposition} \label{prop_scattering_from_initial_explicit}
For every $\Gamma \in \mult(N^+)$ and $\fd \in \Wall_{\Supp(\fg)}$, there exists a unique map 
\begin{equation}
\label{Eq: FgDeltaGamma}
    F_\Gamma^{\fg,\fd} \colon \prod_{n\in \overline{\Gamma}} \fg_n\longrightarrow \fg_{\sum_{n\in \Gamma} n} 
\end{equation}
which is a homogeneous polynomial map of degree $m_\Gamma(n)$ in restriction to the factor $\fg_n$, and such that for every 
consistent $(N^+,\fg)$-scattering diagram $\phi$ and 
$\gamma \in \Z_{\geq 1}n_\fd \in N^+$, the component $\phi(\fd)_\gamma$
of $\phi(\fd)$ in $\fg_\gamma$
is given by 
\begin{equation} \label{eq_H}    \phi(\fd)_{\gamma} = \sum_{\substack{\Gamma \in \mult(N^+) \\  
\sum_{n\in \Gamma} n=\gamma}}
F_\Gamma^{\fg,\fd}\left((I_{\phi,n})_{n \in\overline{\Gamma}}\right)\,,
\end{equation}
where the sum is over all finite multisets $\Gamma$ of $N^+$ whose elements sum up to $\gamma$.
\end{proposition}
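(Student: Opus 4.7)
By Proposition \ref{prop_scattering_from_initial}, the assignment $\phi \mapsto (I_{\phi,n})_{n \in N^+}$ is a bijection between consistent $(N^+,\fg)$-scattering diagrams and elements of $\fg=\bigoplus_{n\in N^+} \fg_n$. Thus, for every wall $\fd \in \Wall_{\Supp(\fg)}$ and $\gamma \in \Z_{\geq 1}n_\fd$, there is a well-defined map
\begin{equation}
F_{\fd,\gamma} \colon \bigoplus_{n \in N^+} \fg_n \longrightarrow \fg_\gamma,\qquad (\mathcal{I}_n)_n \longmapsto \phi(\fd)_\gamma,
\end{equation}
where $\phi$ is the unique consistent scattering diagram with initial data $(\mathcal{I}_n)_n$. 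The plan is to prove that $F_{\fd,\gamma}$ is polynomial, and then to isolate the multi-homogeneous components using a natural torus action coming from the $N^+$-grading.

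First, I would argue that $F_{\fd,\gamma}$ is polynomial. The bijection of Proposition \ref{prop_scattering_from_initial} is the composition of the two bijections of Propositions \ref{prop_scattering} and \ref{prop_initial}: the first passes between $\phi$ and the path-ordered product $\Psi_\phi \in G$, the second between $\Psi_\phi$ and its factorizations $\pi_{\overline{n}}(\Psi_\phi)$. Both bijections and their inverses are built, order by order, out of products and logarithms in $G$, controlled by the Baker--Campbell--Hausdorff formula. Because $\fg$ is finitely $N^+$-graded, it is nilpotent, so BCH is a \emph{finite} polynomial expression, and only finitely many walls in $\Wall_{\Supp(\fg)}$ are involved; hence $\phi(\fd)_\gamma$ depends polynomially on $(\mathcal{I}_n)_n$.

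Second, I would exploit the homogeneity coming from the $N^+$-grading. The torus $T_N \coloneqq \Hom(N,\GG_m)$ acts on $\fg$ by Lie algebra automorphisms through $\lambda \cdot x \coloneqq \lambda(n) x$ for $x \in \fg_n$; this respects the bracket because $[\fg_{n_1},\fg_{n_2}] \subset \fg_{n_1+n_2}$ and $\lambda(n_1)\lambda(n_2)=\lambda(n_1+n_2)$. The induced action on $G$ is by group automorphisms, and for any consistent $(N^+,\fg)$-scattering diagram $\phi$, the twisted diagram $\lambda \cdot \phi$ (with the same walls and decorations rescaled by $\lambda$) is again consistent. Under this twist the initial data transform by $(\mathcal{I}_n)_n \mapsto (\lambda(n)\mathcal{I}_n)_n$, while $\phi(\fd)_\gamma$ gets multiplied by $\lambda(\gamma)$. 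Therefore
\begin{equation}
F_{\fd,\gamma}\bigl((\lambda(n)\mathcal{I}_n)_n\bigr) = \lambda(\gamma)\, F_{\fd,\gamma}\bigl((\mathcal{I}_n)_n\bigr)
\end{equation}
for every $\lambda \in T_N$. Expanding the polynomial $F_{\fd,\gamma}$ as a sum of monomials in the $\mathcal{I}_n$, each occurring monomial $\prod_n \mathcal{I}_n^{c_n}$ must satisfy $\sum_n c_n n = \gamma$. Grouping such monomials by the multiset $\Gamma$ with multiplicities $m_\Gamma(n)=c_n$, and defining $F_\Gamma^{\fg,\fd}$ to be the resulting multi-homogeneous piece of total multidegree $(m_\Gamma(n))_n$, one obtains the decomposition \eqref{eq_H}.

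Uniqueness is then automatic: the map $F_{\fd,\gamma}$ is well-defined on all of $\bigoplus_n \fg_n$, and its decomposition into components that are homogeneous of degree $m_\Gamma(n)$ in each $\fg_n$ is unique because distinct multisets $\Gamma$ produce linearly independent monomial types. The only delicate point in this strategy is the first step: one must check that the iterative reconstruction of $\phi$ from $(\mathcal{I}_n)_n$ really does terminate with polynomial expressions in each $\fg_\gamma$, a fact which ultimately rests on the finiteness of $\Supp(\fg)$ and the ensuing nilpotency of $\fg$.
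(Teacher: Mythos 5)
Your strategy is genuinely different from the paper's and, in outline, correct — but the crucial step you flag as "delicate" is in fact where all the work of the paper's proof happens, so the proposal cannot be accepted as a complete proof without filling it in.

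The paper does not first show that $\phi(\fd)_\gamma$ is a polynomial function of the initial data and then decompose it into multi-homogeneous pieces. Instead it constructs the pieces $F_\Gamma^{\fg,\fd}$ directly by induction on a filtration level $k$ defined via a map $\delta\colon N\to\Z$ with $\delta(N^+)\subset\Z_{\geq 1}$: modulo $\fg^{>k}$, the wall functions are assumed to be given by already-constructed maps $F_{k,\Gamma}^{\fg,\fd}$, and the order-$(k+1)$ correction is obtained by imposing consistency around perpendicular joints and expanding with the Baker--Campbell--Hausdorff formula. The polynomiality and the multidegree constraint come out of this induction simultaneously and for free, because the BCH formula manifestly preserves the $N^+$-grading. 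In particular, the inductive construction is exactly the content of "the iterative reconstruction of $\phi$ from $(\mathcal{I}_n)_n$ really does terminate with polynomial expressions." Invoking nilpotency and BCH is the right intuition, but turning it into a proof requires precisely the bookkeeping the paper carries out via the filtration $\fg^{>k}$, the perpendicular joints, and the identities from Appendix C of the Gross--Hacking--Siebert reference. So as written, your first step is a program rather than a proof.

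That said, your torus-equivariance argument is a clean and correct alternative to the homogeneity bookkeeping. The action of $T_N=\Hom(N,\GG_m)$ on $\fg$ by $\lambda\cdot x=\lambda(n)x$ for $x\in\fg_n$ is indeed by graded Lie algebra automorphisms, it sends consistent diagrams to consistent diagrams, it sends initial data $(\mathcal{I}_n)_n$ to $(\lambda(n)\mathcal{I}_n)_n$, and it rescales $\phi(\fd)_\gamma$ by $\lambda(\gamma)$. Given polynomiality, this immediately pins down that only multisets $\Gamma$ with $\sum_{n\in\Gamma}n=\gamma$ contribute, and it gives uniqueness of the decomposition. The paper instead proves uniqueness by plugging in arbitrary initial data (using the surjectivity from Proposition \ref{prop_scattering_from_initial}) and separating multi-homogeneous parts by hand; your torus argument is the more conceptual formulation of the same idea. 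If you make the polynomiality step rigorous — e.g.\ by running the paper's induction or by an equivalent argument that the inverse to the map $\phi\mapsto (I_{\phi,n})_n$ is given in each graded piece by finite compositions of bracket-polynomials — the rest of your proof goes through.
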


\begin{proof}
We first prove the uniqueness part. Assume that we have two collections $(F_\Gamma^{\fg,\fd})_1$ and $(F_\Gamma^{\fg,\fd})_2$ of maps satisfying the conditions of Proposition \ref{prop_scattering_from_initial_explicit}. 
By Proposition \ref{prop_scattering_from_initial}, there exists a consistent 
$(N^+,\fg)$-scattering diagram for every initial data.
Therefore  \eqref{eq_H} implies the equality of maps 
\begin{equation} \label{eq_uniqueness}  \sum_{\substack{\Gamma \in \mult(N^+) \\ 
\sum_{n \in \Gamma}=\gamma}}
(F_\Gamma^{\fg,\fd})_1
= \sum_{\substack{\Gamma \in \mult(N^+) \\  
\sum_{n \in \Gamma} n=\gamma}}
(F_\Gamma^{\fg,\fd})_2\,.
\end{equation}
For every $\Gamma \in \mult(N^+)$ with 
$\sum_{n \in\Gamma}n=\gamma$, isolating on both sides of  \eqref{eq_uniqueness} the part homogeneous of degree 
$m_\Gamma(n)$ in restriction to each factor 
$\fg_n$, we obtain 
$(F_\Gamma^{\fg,\fd})_1 = (F_\Gamma^{\fg,\fd})_2$.

We now prove the existence claim. 
Let $\delta \colon N \rightarrow \Z$ be an additive map such that $\delta(N^+) \subset \Z_{\geq 1}$. 
For every $k \in \Z_{\geq 0}$, we define the Lie subalgebra 
$\fg^{>k}
\coloneqq 
\bigoplus_{\substack{n \in N^+ \\ \delta(n)>k}} 
\fg_n  \subset \fg$.
We prove by induction on 
$k$ that for every $k \in \Z_{\geq 0}$, 
$\Gamma \in \mult(N^+)$
and $\fd \in \Wall_{\Supp(\fg)}$,
there exists a map
\begin{equation}
    F_{k, \Gamma}^{\fg,\fd} \colon \prod_{n \in
    \overline{\Gamma}} \fg_n 
    \longrightarrow \fg_{\sum_{n \in\Gamma} n} \,,
\end{equation}
such that for every consistent $(N^+,\fg)$-scattering diagram $\phi$ and $\gamma \in \Z_{\geq 1}n_\fd$,
we have 
\begin{equation} 
    \phi(\fd)_\gamma =
    \sum_{\substack{\Gamma \in \mult(N^+) \\ 
    \sum_{n \in\Gamma} n =\gamma}}
    F_{k,\Gamma}^{\fg,\fd} \left(
    (I_{\phi,n})_{n \in\overline{\Gamma}} \right) \mod \fg^{>k}\,.
\end{equation}
As $\fg$ is nilpotent, we have $\fg^{>k}=0$
for $k$ large enough, and so it will be enough to take $F_{\Gamma}^{\fg,\fd} \coloneqq F_{k,\Gamma}^{\fg,\fd}$ for $k$ large enough.

For the base step of the induction, we have $\fg^{>0}=\fg$, so 
$\phi(\fd)_\gamma=0 \mod \fg^{>0}$ for every $\phi$, $\fd$, $\gamma$, and so we can take $F_{0,\Gamma}^{\fg,\fd}=0$ for every $\Gamma$ and $\fd$. For the induction step, fix $k \geq 0$, and assume that the existence of the maps 
$F_{k, \Gamma}^{\fg,\fd}$ is known. We have to show the existence of the maps $F_{k+1,\Gamma}^{\fg,\fd}$.
For every wall $\fd \in \Wall_{\Supp(\fg)}$ and for every consistent $(N^+,\fg)$-scattering diagram 
$\phi$, define 
\begin{equation} \label{eq_bar_phi}
    \overline{\phi(\fd)} \coloneqq 
    \sum_{\substack{\Gamma \in \mult(N^+)\\
    \sum_{n \in\Gamma} n \in\Z_{\geq 1}n_{\fd}}} F_{k, \Gamma}^{\fg,\fd}
    \left( (I_{\phi,n})_{n \in\overline{\Gamma}} \right) \,.
\end{equation}
By the induction hypothesis, we have 
\begin{equation} \label{eq:modulo}
\phi(\fd)=\overline{\phi(\fd)} \mod \fg^{>k}\,.
\end{equation}

By \cite[Definition-Lemma C.2]{MR3758151}, a joint
 $\fj \in \mathfrak{S}_{\Supp(\fg)}$, that is a codimension $2$ cone, is \emph{perpendicular} if for every wall 
$\fd \in \Wall_{\Supp(\fg)}$ containing $\fj$, the contraction 
$\iota_{n_\fd}\langle-,-\rangle = \langle n_\fd, -\rangle$ of 
$\langle -,-\rangle$ with the normal vector $n_\fd$ to $\fd$ is not contained in the $\R$-linear span of $\fj$.
For every perpendicular joint 
$\fj \in \fS_{\Supp(\fg)}$, 
let $\Wall(\fj)$ be the set of walls 
$\fd \in \Wall_{\Supp(\fg)}$ containing $\fj$, and let
$\fp_{\fj} \colon [0,1] \rightarrow M_\R$ be a 
$\fg$-generic loop around $\fj$, intersecting
only once
each wall $\fd \in \Wall(\fj)$ and no other wall.
For every wall 
$\fd \in \Wall(\fj)$, denote by $t_\fd^\fj \in [0,1]$
the point
such that $\fp_\fj(t_\fd^\fj) \in \fd$, and denote by 
$\epsilon_\fd^\fj \in \{ \pm 1\}$ the sign of the derivative of 
$t \mapsto -\fp_\fj(t)(n_{\fd})$ at $t=t_\fd^\fj$.
We label $\fd_1, \dots, \fd_m$ the elements of 
$\Wall(\fj)$
so that 
$0<t_{\fd_1}^{\fj}<\dots<t_{\fd_m}^{\fj}<1$.
By Defn.\ \ref{def_consistency} the relation
\begin{equation} 
\exp(\epsilon_{\fd_m}^\fj \phi(\fd_m))\cdot \exp(\epsilon_{\fd_{m-1}}^\fj \phi(\fd_{m-1}))
\dots 
\exp(\epsilon_{\fd_2}^\fj \phi(\fd_2)) \cdot \exp(\epsilon_{\fd_1}^\fj \phi(\fd_1)) =1 
\end{equation}
holds for every consistent $(N^+,\fg)$-scattering diagram
$\phi$.
Therefore, it follows from  \eqref{eq:modulo}
that 
\begin{equation} \label{eq_log_exp}
    \log \left( \exp(\epsilon_{\fd_m}^\fj \overline{\phi(\fd_m)})
    \dots \exp(\epsilon_{\fd_1}^\fj \overline{\phi(\fd_1)}) \right) 
    =\sum_{\substack{\gamma \in N^+ \\ \delta(\gamma) \geq k+1}} g_{\phi,\gamma}^\fj 
\end{equation}
for some $g_{\phi,\gamma}^\fj \in \fg_n$. 
Using the Baker-Campbell-Hausdorff formula to compute the left-hand side of \eqref{eq_log_exp}, together with \eqref{eq_bar_phi}, we deduce that for every $\Gamma \in \mult(N^+)$, 
there exists a map $G_{\Gamma}^\fj \colon \prod_{n \in\overline{\Gamma}} \fg_n
\rightarrow \fg_{\sum_{n \in\Gamma}n}$,
which is a homogeneous polynomial map of degree $m_\Gamma(n)$
in restriction to the factor $\fg_n$, such that for every 
consistent $(N^+,\fg)$-scattering diagram $\phi$ and 
$\gamma \in N^+$ with $\delta(\gamma) \geq k+1$, we have 
\begin{equation}
    g_{\phi,\gamma}^\fj = 
    \sum_{\substack{\Gamma \in \mult(N^+)\\
    \sum_{i=1}^r \gamma_i=\gamma}} G_{\Gamma}^\fj 
    \left( (I_{\phi,n})_{n \in\overline{\Gamma}} \right) \,.
\end{equation}
where the sum is over multisets $\Gamma=\{ \gamma_i\}_{i\in I}$ for some index set $I$, whose elements sum up to $\gamma$.
According to Appendix C.1 of \cite{MR3758151}
(see the Equations defining 
$\tilde{\fD}_{k+1}$ and $\fD[\mathfrak{j}]$
before Lemma C.6),
for every wall $\fd \in \Wall_{\Supp(\fg)}$ we have 
\begin{equation} \label{eq:scattering}
    \phi(\fd)=\overline{\phi(\fd)}
    +\sum_{\substack{\gamma \in\Z_{\geq 1}n_\fd \\ \delta(\gamma)=k+1}}
    I_{\phi,\gamma}
    - 
    \sum_{\gamma \in \Z_{\geq 1}n_\fd}
    \sum_{\fj} \epsilon_{\fd_\fj}^\fj
    g_{\phi,\gamma}^{\fj} \mod \fg^{>k+1}\,,
\end{equation}
where the sum over $\fj$ 
is over the perpendicular joints 
$\fj$ such that 
$\fd \subset \fj - \R_{\geq 0} \langle n_\fd, -\rangle$, and where $\fd_\fj \in \Wall(\fj)$ is the 
wall containing $\fj$ and contained in
$\fj - \R_{\geq 0} \langle n_\fd, -\rangle$.
Therefore, for every $\Gamma \in \mult(N^+)$ with
$\sum_{n\in \Gamma} n \in\Z_{\geq 1} n_\fd$, 
we can take 
\begin{equation}
    F_{k+1,\Gamma}^{\fg,\fd} 
    =F_{k,\Gamma}^{\fg,\fd} +I_{k+1,\Gamma}^\fd- \sum_{\fj}
    \epsilon_{\fd_\fj}^{\fj}
    G_\Gamma^{\fj} \,.
\end{equation}
where $I_{k+1,\Gamma}^\fd$ is the identity map 
$\fg_{\gamma} \rightarrow \fg_\gamma$ if $\Gamma=\{\gamma\}$
with $\gamma \in \Z_{\geq 1}n_\fd$
such that
$\delta(\gamma)=k+1$, and $I_{k+1,\Gamma}^\fd=0$ else.
\end{proof}

\section{The flow tree formula for scattering diagrams}
\label{section_proof_scattering}
In this section we prove our main result, Theorem \ref{thm_flow_tree_formula_scattering}, which provides an explicit description of the maps $F_\Gamma^{\fg,\fd}$ in \eqref{Eq: FgDeltaGamma} in terms of the (specialization of the) flow tree maps. 

\subsection{$(\cN^+,\fh)$-scattering diagrams}
\label{section_N_h_scattering}
As in \S \ref{section_scattering}, we work with $(N^+,\fg)$-scattering diagrams.
We fix a  wall $\fd \in \Wall_{\Supp(\fg)}$, an element
$\gamma \in
\Z_{\geq 1}n_\fd \subset N^+$ proportional to the normal vector 
$n_\fd$ to $\fd$, and a multiset $\Gamma = \{ \gamma_i\}_{i\in I} \in \mult(N^+)$
of elements of $N^+$
such that 
$\sum_{i\in I} \gamma_i=\gamma$, where $I=\{ 1,\ldots r\}$ is some index set. 
Applying Proposition \ref{prop_scattering_from_initial_explicit}
to the multiset $\Gamma= \{\gamma_i\}_{i \in I}$ and to the wall $\fd$, we obtain a map 
\begin{equation} \label{eq: F_gamma_}
    F_{\Gamma}^{\fg,\fd} \colon \prod_{n \in \overline{\Gamma}} \fg_{n} \longrightarrow \fg_{\gamma} \,.
\end{equation}
Our goal is to state a formula for the map $F_\Gamma^{\fg,\fd}$.
As a first step to achieve this goal, we define in this section another class of 
scattering diagrams, referred to as $(\cN^+,\fh)$-scattering diagrams.

We introduce a rank $r$ free abelian group
$\cN \coloneqq \bigoplus_{i \in I} \Z e_i$
with a basis $\{e_i\}_{i \in I}$,
and the additive map 
\begin{align}
\label{Eq: p}
    p \colon \cN &\longrightarrow N \\
    e_i &\longmapsto \gamma_i \,. \nonumber
\end{align}
For every $J \subset I$, let
\begin{align}
    \label{Eq: e_J}
    e_J \coloneqq \sum_{i\in J} e_i\,.
\end{align}
In particular, we have $p(e_I)=\gamma$. Following the notations set-up in \S\ref{section_trees_flows}, we denote $\cM \coloneqq \Hom(\cN,\Z)$, $\cM_\R \coloneqq \cM \otimes \R$ and 
$\cN^+ \coloneqq \big\{ \sum_{i\in I} a_i e_i \,|\, a_i \geq 0,\, \sum_{i\in I} a_i >0 \big\}$.
The map $p \colon \cN \rightarrow N$
defines by duality a linear map 
\begin{align}
\label{Eq: q}
    q \colon M_{\R} &\longrightarrow \cM_{\R} \\
    \theta & \longmapsto \theta \circ p \,. \nonumber
\end{align}
We define a skew-symmetric bilinear form $\eta \in \bigwedge^2 \cM$
by 

\begin{align}
\label{Eq: eta}
\eta(e_i,e_j) \coloneqq \langle \gamma_i,\gamma_j\rangle
\end{align}
for every $i, j \in I$. In other words, $\eta$ is the pullback of
$\langle -,-\rangle$ by $p$.

\begin{definition} \label{def_h}
We define a $\cN^+$-graded Lie algebra 
$\fh= \bigoplus_{n \in \cN^+} \fh_n$ as follows.
First, we introduce the finite set 
\begin{equation} \label{eq:N_e}
    \cN_e^+ \coloneqq \{ \sum_{i\in I}a_i e_i \in \cN^+|\, a_i \in \{0,1\}\, \forall i \in I\}=\{ e_J\,|\, J \subset I,\, J \neq \emptyset\} \subset \cN^+ \,.
\end{equation}
Then, as vector spaces, we set $\fh_n \coloneqq \fg_{p(n)}$ if
$n \in \cN_e^+$, and $\fh_n \coloneqq 0$ else. 
For $x \in \fh_{n_1}$ and $y \in \fh_{n_2}$, we define the bracket 
$[x,y]$ as being the bracket $[x,y]$ in $\fh_{n_1+n_2}=\fg_{p(n_1)+p(n_2)}$
if $n_1, n_2, n_1+n_2 \in \cN_e^+$, and as being $0$ else. 
\end{definition}
One checks easily that this defines a Lie bracket on 
$\fh$ and that the resulting Lie algebra is 
finitely $\cN^+$-graded: by construction, the support 
$\Supp(\fh)=\{n \in \cN^+|\, \fh_n \neq 0\}$ of $\fh$ is contained in 
$\cN_e^+$.
It follows from \eqref{eq:skew_lie} that $[\fh_{n_1},\fh_{n_2}]=0$ if $\eta(n_1,n_2)=0$.
Thus, we can consider $(\cN^+,\fh)$-scattering diagrams 
as in Defn.\ \ref{def_scattering_diagram} and their initial data as in Defn.\ \ref{Def: initial data}, where $N^+$, $\fg$ and $\langle-,-\rangle \in \bigwedge^2 M$ are replaced by 
$\cN^+$, $\fh$ and $\eta \in \bigwedge^2 \cM$.

Let $\fe \in \Wall_{\Supp(\fh)}$ be a wall in 
$\cM_\R$ with normal vector $n_\fe =e_I$ and which contains the image $q(\fd)$
of the wall $\fd \in \Wall_{\Supp(\fg)}$
by 
the map $q \colon M_\R \rightarrow \cM_\R$ as in \eqref{Eq: q}.
Applying Proposition \ref{prop_scattering_from_initial_explicit}
to the multiset $\Gamma_e \coloneqq \{e_i\}_{i \in I} \in \mult(\cN^+)$
of elements of $\cN^+$ and to the wall $\fe \in \Wall_{\Supp(\fh)}$, we obtain a map 
\begin{equation} \label{eq: F_gamma_e}
    F_{\Gamma_e}^{\fh,\fe} \colon \prod_{i \in I} \fh_{e_i} \longrightarrow \fh_{e_I} \,,
\end{equation}
where we used that, as $\{e_i\}_{i \in I}$ is a basis of 
$\cN$, we have $\overline{\Gamma}_e = \Gamma_e=\{e_i\}_{i \in I}$.

\subsection{From $(N^+,\fg)$ to $(\cN^+, \fh)$-scattering diagrams}
\label{section_reduction_to_Gamma}

The main result of this section, Theorem \ref{thm_D_D_gamma}, provides a comparison of the map $F_\Gamma^{\fg,\fd}$
in \eqref{eq: F_gamma_} and the map 
$F_{\Gamma_e}^{\fh,\fe}$ in \eqref{eq: F_gamma_e}. 
To prove it, we first need to compare the Lie algebras $\fg$ and $\fh$. We do this by going through an intermediate $N^+$-graded Lie algebra 
\begin{align}
    \label{Eq: tilde g}
\tilde{\fg} 
=\bigoplus_{n \in N^+} \tilde{\fg}_n    
\end{align}
defined using the map $p \colon \cN \rightarrow N$ 
in \eqref{Eq: p} and the finite subset $\cN_e^+ \subset \cN^+$
in \eqref{eq:N_e}.

\subsubsection{The Lie algebra $\tilde{\fg}$}

\begin{definition} \label{def_g_tilde}
Define the Lie algebra  $\tilde{\fg}$ as follows: As vector spaces, we set $\tilde{\fg}_n \coloneqq \fg_{n}$ if
$n \in p(\cN_e^+)$, and $\tilde{\fg}_n \coloneqq 0$ else. 
For $x \in \tilde{\fg}_{n_1}$ and $y \in \tilde{\fg}_{n_2}$, 
we define the bracket 
$[x,y]$ as being the bracket $[x,y]$ in $\tilde{\fg}_{n_1+n_2}=\fg_{n_1+n_2}$
if $n_1, n_2, n_1+n_2 \in p(\cN_e^+)$, and as being $0$ else. 
\end{definition}
One checks easily that this defines a Lie bracket on 
$\tilde{\fg}$ and that the resulting Lie algebra is 
finitely $\cN^+$-graded. It follows from \eqref{eq:skew_lie} that $[\tilde{\fg}_{n_1},\tilde{\fg}_{n_2}]=0$ if 
$\langle n_1,n_2 \rangle=0$.
As $\gamma=p(e) \in \Supp(\tilde{\fg})$, there exists 
a unique wall $\tilde{\fd} \in \Wall_{\Supp(\tilde{\fg})}$
such that $\fd \subset \tilde{\fd}$.
Applying Proposition \ref{prop_scattering_from_initial_explicit} for 
$(N^+,\tilde{\fg})$-scattering diagram to the multiset 
$\Gamma \in \mult(N^+)$ and the wall $\tilde{\fd}$, we obtain a map
\begin{equation} \label{eq: F_gamma_tilde}
    F_\Gamma^{\tilde{\fg},\tilde{\fd}} \colon \prod_{n \in \overline{\Gamma}} \tilde{\fg}_n \longrightarrow \tilde{\fg}_\gamma \,.
\end{equation}

\begin{proposition} \label{prop_scattering_tilde}
The maps $F_\Gamma^{\fg,\fd}$ in \eqref{eq: F_gamma_} and
$F_\Gamma^{\tilde{\fg},\tilde{\fd}}$ in \eqref{eq: F_gamma_tilde} are equal: $F_\Gamma^{\fg,\fd}=F_\Gamma^{\tilde{\fg},
\tilde{\fd}}$.
\end{proposition}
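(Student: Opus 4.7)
The plan is to prove $F_\Gamma^{\fg,\fd} = F_\Gamma^{\tilde{\fg},\tilde{\fd}}$ by running the inductive construction from the proof of Proposition~\ref{prop_scattering_from_initial_explicit} in parallel for $\fg$ and $\tilde{\fg}$ at the multiset $\Gamma = \{\gamma_i\}_{i \in I}$. First, since $\gamma_i = p(e_i) \in p(\cN_e^+)$ for every $i \in I$ and $\gamma = p(e_I) \in p(\cN_e^+)$, one has $\fg_n = \tilde{\fg}_n$ for every $n \in \overline{\Gamma}$ and $\fg_\gamma = \tilde{\fg}_\gamma$, so both multilinear maps live on the same source and target.

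The central combinatorial observation I would exploit is that, at multidegree $(m_\Gamma(n))_n$, every iterated Lie bracket appearing in the recursion has all its intermediate degrees in $p(\cN_e^+)$. Indeed, sub-multisets of $\Gamma$ can be indexed by subsets $S \subset I$ via $S \mapsto \{\gamma_i\}_{i \in S}$, with total degree $p(e_S) \in p(\cN_e^+)$. Because each index $i \in I$ is used exactly once in the multilinear expansion, any two sub-multisets $\Gamma_a, \Gamma_b$ whose brackets contribute at multidegree $(m_\Gamma(n))_n$ arise from disjoint subsets $S_a, S_b \subset I$, so that both operands $p(e_{S_a}), p(e_{S_b})$ and their sum $p(e_{S_a \cup S_b})$ lie in $p(\cN_e^+)$. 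By Defn.~\ref{def_g_tilde}, the Lie brackets of $\fg$ and of $\tilde{\fg}$ agree on exactly such inputs.

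Granting this observation, I would induct on the layer $k$ of the construction in the proof of Proposition~\ref{prop_scattering_from_initial_explicit}, with the inductive hypothesis that $F_{k,\Gamma_a}^{\fg,\fd'} = F_{k,\Gamma_a}^{\tilde{\fg},\tilde{\fd}'}$ for every sub-multiset $\Gamma_a$ of $\Gamma$ and every pair of walls $\fd' \in \Wall_{\Supp(\fg)}$ and $\tilde{\fd}' \in \Wall_{\Supp(\tilde{\fg})}$ with $\fd' \subset \tilde{\fd}'$ and matching primitive normal. The base case $k=0$ is trivial. For the inductive step, the formula for $F_{k+1,\Gamma_a}$ derived in the proof of Proposition~\ref{prop_scattering_from_initial_explicit} expresses it as the sum of $F_{k, \Gamma_a}$ (handled by induction), an identity-map contribution (manifestly Lie-algebra-independent), and BCH corrections $G_{\Gamma_a}^{\fj}$ at perpendicular joints. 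By the combinatorial observation together with the inductive hypothesis applied to the wall functions entering the BCH expansion, each $G_{\Gamma_a}^{\fj}$ should evaluate identically in $\fg$ and $\tilde{\fg}$.

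The main obstacle is geometric: the wall and joint complexes $\Wall_{\Supp(\fg)}$ and $\Wall_{\Supp(\tilde{\fg})}$ are not literally equal, since the former refines the latter because $\Supp(\fg) \supsetneq p(\cN_e^+) = \Supp(\tilde{\fg})$. The hard part of the inductive step is therefore to verify that the sum $\sum_{\fj} \epsilon_{\fd_\fj}^{\fj} G_{\Gamma_a}^{\fj}$ over perpendicular joints of $\fg$'s cone complex aggregates correctly to match the analogous sum over joints of $\tilde{\fg}$'s cone complex at the multidegree of interest. I expect this matching to come from subdivided joints of $\Wall_{\Supp(\fg)}$ lying within a single coarser joint of $\Wall_{\Supp(\tilde{\fg})}$ producing contributions whose combinatorial sum reproduces the coarser-joint contribution. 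Once this is in place, the induction closes and the proposition follows.
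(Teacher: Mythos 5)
Your proposal captures exactly the same key observation the paper uses: at multidegree $(m_\Gamma(n))_n$, every intermediate $N^+$-degree in the iterated-bracket expansion is of the form $p(e_S)$ for some $S\subset I$, hence lies in $p(\cN_e^+)$, where the brackets of $\fg$ and $\tilde{\fg}$ agree by Definition~\ref{def_g_tilde}. The paper's proof is essentially this observation plus the remark that the domains and codomains coincide. So you are following the same route as the paper, and your combinatorial analysis of sub-multisets indexed by disjoint $S_a, S_b \subset I$ is a correct and slightly more explicit version of what the paper leaves implicit.

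Where you go beyond the paper is in flagging the mismatch between the cone complexes $\fS_{\Supp(\fg)}$ and $\fS_{\Supp(\tilde{\fg})}$ as the ``main obstacle.'' This is a legitimate concern that the paper's proof does not address explicitly, but your sketched resolution (``subdivided joints aggregating'') is not quite the right picture and, as stated, leaves a gap. The cleaner way to close it: since $\Supp(\fg) \supset \Supp(\tilde{\fg})$, one can run the algorithm of Proposition~\ref{prop_scattering_from_initial_explicit} for \emph{both} Lie algebras on the \emph{same} refined cone complex $\fS_{\Supp(\fg)}$ (a consistent $(N^+,\tilde{\fg})$-scattering diagram extends trivially by zero over the extra walls, and $F_\Gamma$ is uniquely determined by Proposition~\ref{prop_scattering_from_initial_explicit} independently of which complex computes it). In that common complex, at multidegree $(m_\Gamma(n))_n$, any wall whose primitive normal has no positive multiple in $p(\cN_e^+)$ carries no contribution, so any joint of $\fS_{\Supp(\fg)}$ not lying in a codimension-$2$ cone of $\fS_{\Supp(\tilde{\fg})}$ gives $G_\Gamma^{\fj}=0$; and at the remaining joints the BCH terms are identical in $\fg$ and $\tilde{\fg}$ by your combinatorial observation. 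There is no aggregation to verify because the generic perpendicular path encounters each relevant coarse codimension-$2$ locus at a single fine joint. With this clarification, your induction closes and the argument matches the paper's.
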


\begin{proof}
By definition of $\tilde{\fg}$, we have $\tilde{\fg}_n = \fg_n$
for every $n \in \Gamma \cup \{ \gamma \}$, and so the maps
$F_\Gamma^{\fg,\fd}$ and $F_\Gamma^{\tilde{\fg},\tilde{\fd}}$ have the same domain and codomain. The result then follows from the fact that the algorithmic construction of $F_\Gamma^{\fg,\fd}$ reviewed in the proof of Proposition \ref{prop_scattering_from_initial_explicit}
involves only brackets $[x,y]$ with $x \in \fg_{n_1}$, $y \in \fg_{n_2}$, $[x,y] \in \fg_{n_1+n_2}$ and $n_1, n_2, n_1+n_2 
\in p(\cN_e^+)$.
\end{proof}
In what remains, we compare the Lie algebras $\tilde{\fg}$ and 
$\fh$.
\begin{proposition}
 \label{prop_basic_tool}
Let $q \colon M_{\RR} \to \mathcal{M}_{\RR}$ be the linear map defined in \eqref{Eq: q}. Then,
\begin{itemize}
   \item[(i)]  For every $n \in \cN$, the preimage 
   $q^{-1}(n^{\perp})$ of the hyperplane $n^{\perp} \subset M_\R$ by the map 
   \[ q \colon M_\R \longrightarrow \cM_\R\] 
   is the hyperplane $(p(n))^{\perp} \subset \cM_\R$.
   \item[(ii)] For every cone $\sigma \in \fS_{\Supp(\tilde{\fg})}$, the image $q(\sigma)$ of $\sigma$ by $q \colon M_\R \rightarrow \cM_\R$
   is a cone $q(\sigma) \in \fS_{\Supp(\fh)}$. 
\end{itemize}
\end{proposition}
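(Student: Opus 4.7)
Part \textbf{(i)} will follow by directly unwinding the definition of $q$ as the dual of $p$ in \eqref{Eq: q}: for every $\theta \in M_\R$ and $n \in \cN$, the identity $q(\theta)(n) = (\theta \circ p)(n) = \theta(p(n))$ gives the chain of equivalences
\[
\theta \in q^{-1}(n^\perp) \iff q(\theta)(n) = 0 \iff \theta(p(n)) = 0 \iff \theta \in p(n)^\perp,
\]
identifying the preimage of the hyperplane $n^\perp \subset \cM_\R$ with the hyperplane $p(n)^\perp \subset M_\R$.

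For part \textbf{(ii)}, the key preliminary observation is the equality $\Supp(\tilde{\fg}) = p(\Supp(\fh))$, which is immediate from comparing Defn \ref{def_h} with Defn \ref{def_g_tilde}: an element $n \in \cN_e^+$ belongs to $\Supp(\fh)$ precisely when $\fg_{p(n)} \neq 0$, so $p(\Supp(\fh)) = p(\cN_e^+) \cap \Supp(\fg) = \Supp(\tilde{\fg})$. Given a cone $\sigma(P_+, P_0, P_-) \in \fS_{\Supp(\tilde{\fg})}$ with $P = \Supp(\tilde{\fg})$ and $P_0 \neq \emptyset$, I define the pulled-back partition
\[
\tilde{P}_\epsilon \coloneqq p^{-1}(P_\epsilon) \cap \Supp(\fh), \quad \epsilon \in \{+, 0, -\}.
\]
Disjointness of the $P_\epsilon$ immediately gives disjointness of the $\tilde{P}_\epsilon$; surjectivity of $p|_{\Supp(\fh)}$ onto $\Supp(\tilde{\fg})$ forces $\Supp(\fh) = \tilde{P}_+ \sqcup \tilde{P}_0 \sqcup \tilde{P}_-$; and choosing any preimage in $\Supp(\fh)$ of an element of $P_0$ shows $\tilde{P}_0 \neq \emptyset$. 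Consequently $\sigma(\tilde{P}_+, \tilde{P}_0, \tilde{P}_-) \in \fS_{\Supp(\fh)}$.

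It then remains to verify $q(\sigma(P_+, P_0, P_-)) = \sigma(\tilde{P}_+, \tilde{P}_0, \tilde{P}_-)$ by double containment. The inclusion $\subseteq$ is immediate from $q(\theta)(\tilde{n}) = \theta(p(\tilde{n}))$: for $\theta \in \sigma(P_+, P_0, P_-)$ and $\tilde{n} \in \tilde{P}_\epsilon$, the sign or vanishing condition on $\theta(p(\tilde{n}))$ coming from $p(\tilde{n}) \in P_\epsilon$ transfers directly to $q(\theta)(\tilde{n})$. For the reverse inclusion, I lift $\tilde{\theta} \in \sigma(\tilde{P}_+, \tilde{P}_0, \tilde{P}_-)$ to some $\theta \in M_\R$ with $q(\theta) = \tilde{\theta}$ by prescribing $\theta(p(\tilde{n})) \coloneqq \tilde{\theta}(\tilde{n})$ on $\Ima p \subseteq N$ and extending freely to $N$; for each $n \in P_\epsilon$, choosing any preimage $\tilde{n} \in \tilde{P}_\epsilon$ yields $\theta(n) = \tilde{\theta}(\tilde{n})$ with the correct sign, placing $\theta$ in $\sigma(P_+, P_0, P_-)$. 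The main technical point to handle carefully will be the well-definedness of this lift on $\Ima p$ when $p$ is not injective on $\Supp(\fh)$, which must be reconciled with the constraints on $\tilde{\theta}$ imposed by membership in $\sigma(\tilde{P}_+, \tilde{P}_0, \tilde{P}_-)$.
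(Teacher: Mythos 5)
Part (i) coincides with the paper's one-line argument, and your pulled-back partition $\tilde P_\epsilon$ in part (ii) is exactly the paper's $Q_\epsilon = \{n \in \Supp(\fh) \mid p(n) \in P_\epsilon\}$; the forward inclusion $q(\sigma) \subseteq \sigma(\tilde P_+,\tilde P_0,\tilde P_-)$ you prove is precisely what the paper establishes when it writes ``using that $\theta(p(n)) = (q(\theta))(n)$.'' So up to that point the two arguments are the same.

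The reverse inclusion, which you rightly flag and leave open, is the real issue, and your proposed lift cannot be repaired: prescribing $\theta(p(\tilde n)) \coloneqq \tilde\theta(\tilde n)$ is well-defined on $\Ima p$ only if $\tilde\theta$ annihilates $\ker p$, i.e.\ $\tilde\theta(\tilde n_1) = \tilde\theta(\tilde n_2)$ whenever $p(\tilde n_1) = p(\tilde n_2)$; but if $\tilde n_1, \tilde n_2$ both lie in, say, $\tilde P_+$, membership in $\sigma(\tilde P_+,\tilde P_0,\tilde P_-)$ only constrains their signs and permits $\tilde\theta(\tilde n_1) \neq \tilde\theta(\tilde n_2)$. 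This happens exactly when the multiset $\Gamma$ has repeated entries, so $p$ has nontrivial kernel, and then the set-theoretic image $q(\sigma)$ is confined to the proper subspace $(\ker p)^\perp$ and is in general a strict subcone of $\sigma(\tilde P_+,\tilde P_0,\tilde P_-)$ that is not itself of the form $\sigma(Q_+,Q_0,Q_-)$. The paper's proof simply asserts the equality without comment and therefore shares the same gap. Fortunately, the downstream uses of this proposition — defining $\phi_\rho$ in \eqref{eq:def_bar_phi} by applying $\overline{\rho}$ to the cone determined by $(Q_+,Q_0,Q_-)$, and identifying in Lemma \ref{lem_initial_bar_no_bar} and Theorem \ref{thm_D_D_gamma} which hyperplanes $m^\perp$ contain $q(\sigma)$ — rely only on the forward inclusion together with part (i), both of which your argument (and the paper's) does establish. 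The right move is therefore not to force the reverse inclusion, but to state (ii) as an inclusion, or to interpret ``$q(\sigma)$'' as the cone $\sigma(Q_+,Q_0,Q_-)$, which is the reading the paper's own applications implicitly take.
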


\begin{proof} The first part (i) of the Lemma follows immediately since we have $\theta \in q^{-1}(n^{\perp})$ if and only if $(q(\theta))(n)=0$ if and only if $\theta (p(n))=0$.

To show (ii), first note that by Defn.\ \ref{def_cone}, the assumption $\sigma \in \fS_{\Supp(\tilde{\fg})}$
implies that there exists a partition of the set $\Supp(\tilde{\fg}) \subset N^+$ into subsets  
$\Supp(\tilde{\fg})=P_+ \sqcup P_0 \sqcup P_-$
such that 
\begin{equation} \sigma
    \coloneqq \{ \theta \in M_\R \,|\, \theta(n)=0
    \,\, \text{for} \,\, n\in P_0\,, \pm \theta(n) \geq 0
    \,\, \text{for} \,\, n \in P_{\pm} \}\,.\end{equation}
Define $Q_\pm \coloneqq \{ n \in \Supp(\fh)|\,p(n) \in P_{\pm}\}$
and $Q_0 \coloneqq \{n \in \Supp(\fh)|\,p(n) \in P_{0}\}$.
As $\Supp(\tilde{\fg})=p(\Supp(\fh))$, we have 
$\Supp(\fh)=Q_+ \sqcup Q_0 \sqcup Q_-$.
Using that $\theta(p(n))=(q(\theta))(n)$ for every $n \in \cN$, we obtain $q(\sigma)= \{ \theta \in \cM_\R \,|\, \theta(n)=0
    \,\, \text{for} \,\, n\in Q_0\,, \pm \theta(n) \geq 0
    \,\, \text{for} \,\, n \in Q_{\pm} \}$.
    Hence, $q(\sigma) \in \fS_{\Supp(\fh)}$ by Defn.\ \ref{def_cone}.
\end{proof}

\begin{proposition} \label{prop_comparison_attractor_points}
For every $n \in \cN$, the attractor points 
$\langle p(n),-\rangle$ for $p(n)$ and 
$\iota_n \eta=\eta(n,-)$ for $n$ as in Proposition 
\ref{prop_initial_scattering} are related by:
\begin{equation}  q(\langle p(n),- \rangle)
= \iota_n \eta\,,
\end{equation}
where $\eta \in \bigwedge^2 \mathcal{M}$ is defined by \eqref{Eq: eta}.
\end{proposition}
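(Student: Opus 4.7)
The plan is to unwind all three definitions and check equality of elements in $\mathcal{M}_\R$ by evaluating on an arbitrary $m \in \cN$. Both sides $q(\langle p(n),-\rangle)$ and $\iota_n \eta$ are elements of $\mathcal{M}_\R = \Hom(\cN,\R)$, so it suffices to show they agree as linear functionals on $\cN$.

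First I would compute the left-hand side: by the definition of $q$ in \eqref{Eq: q}, we have $q(\theta) = \theta \circ p$ for every $\theta \in M_\R$, so
\begin{equation}
    \bigl(q(\langle p(n),-\rangle)\bigr)(m) = \langle p(n),-\rangle\bigl(p(m)\bigr) = \langle p(n), p(m) \rangle
\end{equation}
for every $m \in \cN$. Next I would compute the right-hand side: by definition $(\iota_n \eta)(m) = \eta(n,m)$.

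The key step is then to observe that $\eta$ was defined in \eqref{Eq: eta} as the pullback of $\langle -,-\rangle$ by $p$: on basis vectors one has $\eta(e_i,e_j) = \langle \gamma_i,\gamma_j \rangle = \langle p(e_i), p(e_j) \rangle$, and since both $\eta$ and $(n,m) \mapsto \langle p(n), p(m)\rangle$ are $\Z$-bilinear on $\cN \times \cN$ (extending $\R$-bilinearly after tensoring) and agree on the basis $\{e_i\}_{i \in I}$, they are equal. Hence $\eta(n,m) = \langle p(n), p(m)\rangle$ for all $n,m \in \cN$, which combined with the two computations above yields
\begin{equation}
    \bigl(q(\langle p(n),-\rangle)\bigr)(m) = \langle p(n), p(m)\rangle = \eta(n,m) = (\iota_n \eta)(m)\,,
\end{equation}
proving the proposition. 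There is no substantial obstacle here: the statement is purely a definitional compatibility between the duality map $q$ and the pullback form $\eta$, and the proof is a one-line unfolding of definitions.
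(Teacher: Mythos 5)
Your proof is correct and follows the same route as the paper's: evaluate both sides on an arbitrary $m \in \cN$, apply the definition of $q$ from \eqref{Eq: q}, and use the fact that $\eta$ is the pullback of $\langle-,-\rangle$ by $p$ as in \eqref{Eq: eta}. The only difference is that you spell out explicitly (via bilinearity and agreement on the basis $\{e_i\}_{i\in I}$) why $\eta(n,m) = \langle p(n),p(m)\rangle$ holds for all $n,m\in\cN$, a step the paper treats as immediate.
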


\begin{proof}
For every $m \in \cN$, we have 
\begin{equation} \label{eq_11}
    (q(\langle p(n),-\rangle))(m)
    =\langle p(n), p(m) \rangle
    =\eta(n, m)
    = (\iota_n \eta)(m) \,,
\end{equation}
where the first equality uses \eqref{Eq: q} and the second equality uses
\eqref{Eq: eta}.
\end{proof}

\begin{figure}
\center{\scalebox{.6}{\input{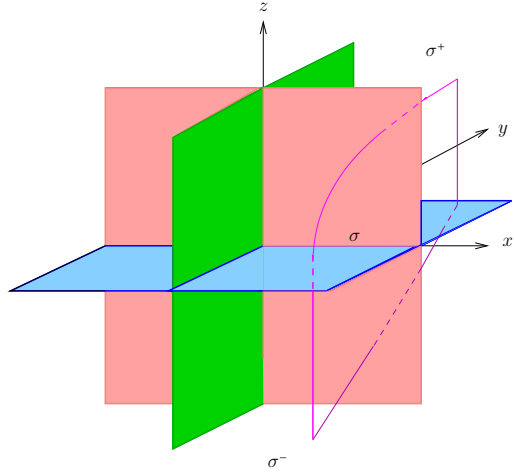}}}
\caption{Paths around a codimension two cone $\sigma$.}
\label{Fig: extension}
\end{figure}

\subsubsection{The $(N^+, \tilde{\fg})$-scattering diagram  and consistency}

In this section, we construct a consistent $(N^+,\tilde{\fg})$-scattering diagram
$\phi_\rho$ starting from a consistent $(N^+,\fh)$-scattering diagram $\rho$.

Let $\rho \colon \Wall_{\Supp(\fh)} \rightarrow \fh$ be a consistent 
$(N^+,\fh)$-scattering diagram. Following \cite[\S$2$]{mou2019scattering}, 
we start by defining an extension 
$\overline{\rho}  \colon \fS_{\Supp(\fh)} \rightarrow \fh$ of $\rho$
where the set of walls $\Wall_{\Supp(\fh)}$ is replaced by the set 
$\fS_{\Supp(\fh)}$ of all cones. 
For a cone $\sigma \in \fS_{\Supp(\fh)}$, there exists by Defn.\ \ref{def_cone}
 a decomposition 
$\Supp(\fh)=P_+ \sqcup P_0 \sqcup P_-$
such that 
\begin{equation}
\sigma
    \coloneqq \{ \theta \in \cM_\R \,|\, \theta(n)=0
    \,\, \text{for} \,\, n\in P_0\,, \pm \theta(n) \geq 0
    \,\, \text{for} \,\, n \in P_{\pm} \}\,.
\end{equation}    
We denote 
\[ \sigma^+ \coloneqq \{ \theta \in \cM_{\R}\,|\, 
\theta(m) >0,\,\forall m \in P_+ \cup P_0 \,, \text{and}\, 
\theta(m)<0,\, \forall m \in P_-\} \]
and 
\[
\sigma^- \coloneqq \{ \theta \in \cM_{\R}\,|\, 
\theta(m) >0,\,\forall m \in P_+\,, \text{and}\, \theta(m)<0,\, \forall m \in P_0 \cup P_- \}\,.\]
Let $\fp \colon [0,1] \rightarrow \cM_{\R}$
be a $\fh$-generic path with $\fp(0) \in \sigma^+$ and 
$\fp(1) \in \sigma^-$ (see Figure \ref{Fig: extension}). 
By
\eqref{eq:path_automorphism}, we have the corresponding
path-ordered product
$\Psi_{\fp,\rho} \in H \coloneqq \exp(\fh)$, and we define
\begin{equation}
\overline{\rho}(\sigma) \coloneqq \log \Psi_{\fp,\rho} \in \fh\,.
\end{equation}
By consistency of $\rho$, this definition of
$\overline{\rho}(\sigma)$ is independent of the choice of the 
path $\fp$.

By Defn.\ \ref{def_h} and Defn.\ \ref{def_g_tilde}, we have
$\tilde{\fg}=\bigoplus_{n \in p(\cN_e^+)} \fg_n$ and
$\fh=\bigoplus_{n \in \cN_e^+} \fg_{p(n)}$.
We denote by $\nu \colon \fh \rightarrow \tilde{\fg}$ the natural projection map sending $\fh_n=\fg_{p(n)}$ onto 
$\fg_{p(n)}=\tilde{\fg}_{p(n)}$.
We now define a
$(N^+, \tilde{\fg})$-scattering diagram 
$\phi_\rho \colon \Wall_{\Supp(\tilde{\fg})} \rightarrow \tilde{\fg}$.
For every wall $\sigma \in  \Wall_{\Supp(\tilde{\fg})}$, the image 
$q(\sigma)$ of $\sigma$ by $q$ is a cone in $\fS_{\Supp(\fh)}$
by Proposition \ref{prop_basic_tool} (ii). Therefore, one can apply $\overline{\rho}$ to $q(\sigma)$ to obtain 
$\overline{\rho}(q(\sigma)) \in \fh$, and finally 
$\nu \colon \fh \rightarrow \tilde{\fg}$:
\begin{align} \label{eq:def_bar_phi}
    \phi_\rho(\sigma) \coloneqq  \nu (\overline{\rho}
    (q(\sigma))) \in \tilde{\fg} \,.
\end{align}

\begin{lemma}
For every consistent $(\cN^+,\fh)$-scattering diagram 
$\rho \colon \Wall_{\Supp(\fh)} \rightarrow \fh$, the $(N^+, \tilde{\fg})$-scattering diagram $\phi_\rho \colon \Wall_{\Supp(\tilde{\fg})} \rightarrow \tilde{\fg}$ defined by \eqref{eq:def_bar_phi} is consistent.
\end{lemma}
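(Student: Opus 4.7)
The plan is to derive consistency of $\phi_\rho$ from that of $\rho$ via the linear map $q\colon M_\R \to \cM_\R$. By Proposition \ref{prop_basic_tool}(ii), $q$ carries cones of $\fS_{\Supp(\tilde{\fg})}$ to cones of $\fS_{\Supp(\fh)}$, so any $\tilde{\fg}$-generic path $\fp$ in $M_\R$ crossing walls $\sigma_1,\dots,\sigma_k \in \Wall_{\Supp(\tilde{\fg})}$ with signs $\epsilon_1,\dots,\epsilon_k$ pushes forward under $q$ to a piecewise smooth path $q\circ\fp$ in $\cM_\R$ that transversally crosses the cones $q(\sigma_i) \in \fS_{\Supp(\fh)}$, possibly of codimension greater than one, in the same order and with the same signs.

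Next, I would extend the path-ordered product of $\rho$ to paths in $\cM_\R$ that transversally cross cones of $\fS_{\Supp(\fh)}$ of arbitrary codimension: a crossing of $\tau \in \fS_{\Supp(\fh)}$ with sign $\epsilon$ contributes the factor $\exp(\epsilon\, \overline{\rho}(\tau)) \in H$, with $\overline{\rho}(\tau)$ as defined just before \eqref{eq:def_bar_phi}. By construction of $\overline{\rho}$ as a path-ordered product from $\tau^+$ to $\tau^-$, and by the consistency of $\rho$, this extended path-ordered product depends only on the endpoints of the path, and in particular is trivial along every closed loop. This is the natural cone-complex formulation of consistency, implicit in the equivalence between the two viewpoints on scattering diagrams recalled in the proof of Proposition \ref{prop_scattering}; see \cite[Appendix~C]{MR3758151} for the underlying machinery.

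With these two ingredients, consistency of $\phi_\rho$ reduces by Defn.\ \ref{def_consistency} to showing $\Psi_{\fp,\phi_\rho} = 1$ for every $\tilde{\fg}$-generic loop $\fp$. Unpacking the definition \eqref{eq:def_bar_phi}, we have
\[
\Psi_{\fp,\phi_\rho} \;=\; \prod_{i=k}^{1} \exp\!\bigl(\epsilon_i\, \nu(\overline{\rho}(q(\sigma_i)))\bigr) \in \exp(\tilde{\fg}).
\]
The goal is to identify this with $\nu$ applied factor-by-factor to the extended $\rho$-path-ordered product along the loop $q\circ\fp$ in $\cM_\R$, which by the previous step is the identity in $H$. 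The identity on the $\fh$-side then maps to the identity on the $\tilde{\fg}$-side, yielding $\Psi_{\fp,\phi_\rho} = 1$.

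The main obstacle is justifying this last identification: the projection $\nu\colon \fh \to \tilde{\fg}$ of Defns.\ \ref{def_h} and \ref{def_g_tilde} is not in general a Lie algebra homomorphism (because distinct nonempty subsets $J\subset I$ may have coincident images $p(e_J)\in N^+$), so the passage from the $\fh$-side to the $\tilde{\fg}$-side via $\nu$ must be controlled. A careful bookkeeping, using that the iterated brackets arising in the BCH expansion involve sums $\sum_i e_{J_i}$ that survive in $\fh$ only when the $J_i$ are pairwise disjoint --- precisely the case in which the $\fh$- and $\tilde{\fg}$-brackets both recover the common $\fg$-bracket --- shows that $\nu$ intertwines the relevant iterated products. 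Equivalently, one may localize the whole argument at each joint $\fj \in \fS_{\Supp(\tilde{\fg})}$ and deduce consistency of $\phi_\rho$ around $\fj$ from the consistency of $\rho$ around the cone $q(\fj) \in \fS_{\Supp(\fh)}$.
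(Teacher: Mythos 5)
Your proposal follows essentially the same route as the paper's proof: push the loop $\fp$ forward via $q$, replace the pushed-forward curve by an $\fh$-generic path in $\cM_\R$ (your ``extended path-ordered product across higher-codimension cones'' and the paper's small perturbation $\fq$ of $q\circ\fp$ are two packagings of the same device, both resting on $\overline\rho$), and then transfer $\Psi_{\fq,\rho}=1$ to $\Psi_{\fp,\phi_\rho}=1$ via $\nu$. You correctly flag the one step that deserves scrutiny and that the paper's two-line proof glosses over: the assertion that $\Psi_{\fp,\phi_\rho}$ is the image of $\Psi_{\fq,\rho}$ under $\exp(\nu)$ needs $\nu$ to intertwine the iterated brackets appearing in the BCH expansion, and $\nu\colon\fh\to\tilde\fg$ is not a Lie algebra homomorphism.

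However, the ``careful bookkeeping'' you sketch does not actually close that gap, and the issue is worse than coincident images of distinct $e_J$. You argue that in $\fh$ only pairwise-disjoint $J_i$ contribute, and that in that case $\fh$- and $\tilde\fg$-brackets agree; both true. What you omit is the converse direction: when $J_1\cap J_2\neq\emptyset$ the $\fh$-bracket vanishes, yet $[\nu(\fh_{e_{J_1}}),\nu(\fh_{e_{J_2}})]_{\tilde\fg}$ can be nonzero whenever $p(e_{J_1})+p(e_{J_2})$ happens to lie in $p(\cN_e^+)$ and $\langle p(e_{J_1}),p(e_{J_2})\rangle\neq 0$. This happens exactly when the $\gamma_i$ satisfy linear relations, e.g.\ $\gamma_3=\gamma_1+\gamma_2$, where $e_1+e_{\{1,2\}}\notin\cN_e^+$ but $p(e_1)+p(e_{\{1,2\}})=2\gamma_1+\gamma_2=p(e_{\{1,3\}})\in p(\cN_e^+)$. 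So $\text{BCH}_{\tilde\fg}(\nu(y_i))$ contains terms with no counterpart in $\nu(\text{BCH}_\fh(y_i))$, and ``$\nu$ intertwines'' does not follow from your disjointness observation alone. A complete argument would need either to show these extra $\tilde\fg$-terms cancel (using the full strength of consistency of $\rho$, not merely $\Psi_{\fq,\rho}=1$ for one loop), or to carry out the localization at joints that you mention in your last sentence --- that suggestion is the right spirit, but as written it is a pointer rather than a proof.
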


\begin{proof}
Let $\fp \colon [0,1] \rightarrow M_\R$ be a $\tilde{\fg}$-generic loop. 
Let 
$\fq$ be a small generic perturbation of 
$t \mapsto q(\fp(t))$
such that, for every 
$\sigma \in \Wall_{\Supp(\tilde{\fg})}$ and 
$t' \in [0,1]$ with $\fp(t') \in \sigma$, the perturbed path $t \mapsto \fq(t)$ goes from $(q(\sigma))^-$ to 
$(q(\sigma))^+$, or from $(q(\sigma))^+$ to 
$(q(\sigma))^-$, in a small neighborhood of $t'$.
By the definition of 
$\phi_\rho$ in \eqref{eq:def_bar_phi}, the group element
$\Psi_{\fp,\phi_\rho}$ is the image in $\tilde{G}=\exp(\tilde{\fg})$
of the group element $\Psi_{\fq,\rho}$
by $\exp(\nu) \colon H \rightarrow \tilde{G}$.
By consistency of $\rho$ we have $\Psi_{\fq,\rho}=\mathrm{id}$, 
and hence $\Psi_{\fp,\phi_\rho}=\mathrm{id}$.
\end{proof}

\begin{lemma} \label{lem_initial_bar_no_bar}
For every consistent $(\cN^+,\fh)$-scattering diagram 
$\rho \colon \Wall_{\Supp(\fh)} \rightarrow \fh$, the initial data of 
$\rho$ and of the $(N^+, \tilde{\fg})$-scattering diagram $\phi_\rho \colon \Wall_{\Supp(\tilde{\fg})} \rightarrow \tilde{\fg}$ defined by \eqref{eq:def_bar_phi} are related as follows:
for every $n \in \Supp(\tilde{\fg})=p(\Supp(\fh))$, we have
\begin{equation} \label{eq:lem_initial}
  I_{\phi_\rho, n}= \sum_{\substack{m 
    \in \Supp(\fh)\\ p(m)=n}}
   \nu(I_{\rho,m})\,,
\end{equation}
where $I_{\phi_\rho, n}$ and $I_{\rho,m}$ are the initial data
of $\phi_\rho$ and $\rho$ as in Defn.\ \ref{Def: initial data}.
\end{lemma}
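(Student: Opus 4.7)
The plan is to reduce the statement, via Proposition \ref{prop_initial_scattering}, to an equality of wall-evaluations, and then match attractor points on the two sides using Proposition \ref{prop_comparison_attractor_points}.

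Concretely, I would proceed as follows. Fix $n \in \Supp(\tilde{\fg}) = p(\Supp(\fh))$ and let $\overline{n} \in N^+$ be the primitive element with $n \in \Z_{\geq 1}\overline{n}$. Choose a wall $\fd \in \Wall_{\Supp(\tilde{\fg})}$ with $n_\fd = \overline{n}$ and $\langle n,-\rangle \in \fd$; by Proposition \ref{prop_initial_scattering} applied to the $(N^+,\tilde{\fg})$-scattering diagram $\phi_\rho$,
\begin{equation}
I_{\phi_\rho,n} = \phi_\rho(\fd)_n.
\end{equation}
By definition \eqref{eq:def_bar_phi}, $\phi_\rho(\fd) = \nu(\overline{\rho}(q(\fd)))$, and $q(\fd) \in \fS_{\Supp(\fh)}$ by Proposition \ref{prop_basic_tool}(ii). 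Since $\nu$ acts as the identity from $\fh_m = \fg_{p(m)}$ onto $\tilde{\fg}_{p(m)} = \fg_{p(m)}$ on each graded piece, extracting the $\tilde{\fg}_n$-component gives
\begin{equation}
I_{\phi_\rho,n} \;=\; \sum_{\substack{m \in \Supp(\fh)\\ p(m)=n}} \nu\bigl(\overline{\rho}(q(\fd))_m\bigr).
\end{equation}
Hence it suffices to show that for every such $m$, $\overline{\rho}(q(\fd))_m = I_{\rho,m}$.

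For this, let $\overline{m} \in \cN^+$ be the primitive of $m$. Because $\fd \subset \overline{n}^{\perp}$ and $p(\overline{m})$ is a positive rational multiple of $\overline{n}$, Proposition \ref{prop_basic_tool}(i) gives $q(\fd) \subset \overline{m}^{\perp}$. Moreover Proposition \ref{prop_comparison_attractor_points} yields
\begin{equation}
q(\langle n,-\rangle) = \iota_m\eta,
\end{equation}
so the attractor point $\iota_m \eta$ of $m$ lies in $q(\fd)$. Unpacking the defining formula $\overline{\rho}(q(\fd)) = \log \Psi_{\fp,\rho}$ for a $\fh$-generic path $\fp$ from $q(\fd)^+$ to $q(\fd)^-$, and using the consistency of $\rho$ to localize $\fp$ to a small neighborhood of $\iota_m\eta$, the $\overline{m}$-parallel projection $\pi_{\overline{m}}(\Psi_{\fp,\rho}) \in G_{\overline{m},0}^{\parallel}$ coincides with $\pi_{\overline{m}}(\Psi_\rho)$; projecting onto $\fh_m$ and invoking Defn.\ \ref{Def: initial data} gives $\overline{\rho}(q(\fd))_m = I_{\rho,m}$.

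The main obstacle is the last step: identifying the $m$-graded piece of $\overline{\rho}(q(\fd))$ with the initial datum $I_{\rho,m}$ when $q(\fd)$ is a general (possibly higher-codimensional) cone of $\fS_{\Supp(\fh)}$ rather than a wall. This requires a careful reformulation of Proposition \ref{prop_initial_scattering}, locating the parallel-in-direction-$\overline{m}$ part of the path-ordered product via a localization near $\iota_m\eta$ and using the fact that components of $\Psi_\rho$ in directions not proportional to $\overline{m}$ do not contribute to $\pi_{\overline{m}}$. Once this is established, summing over $m \in \Supp(\fh)$ with $p(m)=n$ and applying $\nu$ yields the desired formula \eqref{eq:lem_initial}.
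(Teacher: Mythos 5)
Your high-level strategy matches the paper's: start from $I_{\phi_\rho,n}=\phi_\rho(\fd)_n$ via Proposition \ref{prop_initial_scattering}, unfold the definition \eqref{eq:def_bar_phi} through $q$ and $\nu$, and close the loop by matching attractor points using Proposition \ref{prop_comparison_attractor_points}. You also correctly identify where the difficulty sits. However, the step you flag as an ``obstacle''---showing $\overline{\rho}(q(\fd))_m = I_{\rho,m}$---is where the proof actually happens, and your proposed workaround leaves the essential point unaddressed.

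The issue is concrete: $\overline{\rho}(q(\fd)) = \log \Psi_{\fp,\rho}$ where $\fp$ runs from $(q(\fd))^+$ to $(q(\fd))^-$, and this path crosses \emph{several} walls $\fd_{m_1},\fd_{m_2},\dots$ with normals $m_i \in \Supp(\fh)$ satisfying $p(m_i) \in \Z_{\geq 1}n_\fd$. In general, $\log$ of such a product has BCH cross-terms: a bracket $[\rho(\fd_{m_1}),\rho(\fd_{m_2})]$ lands in $\fh_{m_1+m_2}$ and could pollute the $\fh_m$-component you are trying to isolate. Your appeal to ``localization near $\iota_m\eta$'' and to $\pi_{\overline{m}}$ projections does not by itself eliminate these cross-terms, and the asserted identity $\pi_{\overline{m}}(\Psi_{\fp,\rho})=\pi_{\overline{m}}(\Psi_\rho)$ is precisely what one would need to justify, not a step that follows from the definitions.

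What makes the argument go through---and what the paper makes explicit---is a vanishing of brackets: for any two primitives $m,m' \in \Supp(\fh)$ with $p(m),p(m')$ both proportional to $n_\fd$, one has $\eta(m,m') = \langle p(m), p(m')\rangle = 0$, hence $[\rho(\fd_m),\rho(\fd_{m'})]=0$ by the analogue of \eqref{eq:skew_lie} for $\fh$. This turns the path-ordered product $\Psi_{\fp,\rho}$ into an honest product of mutually commuting factors, so its $\log$ is the plain sum $\sum_m \pm \rho(\fd_m)$ with no cross-terms. The $\fh_m$-component is then exactly $\rho(\fd_m)_m$, and since $\iota_m\eta = q(\langle p(m),-\rangle) \in q(\fd) \subset \fd_m$ by Proposition \ref{prop_comparison_attractor_points}, another application of Proposition \ref{prop_initial_scattering} to $\rho$ identifies $\rho(\fd_m)_m = I_{\rho,m}$. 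Your proof needs to incorporate this commutativity observation (together with the explicit choice of a $\fh$-generic path $\fp$ that meets each hyperplane $m^\perp$ once, at a wall $\fd_m$ containing $q(\fd)$) to be complete; without it, the reduction to wall-level initial data is not established.
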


\begin{proof}
Let $\sigma \in \Wall_{\Supp(\tilde{\fg})}$ be a wall 
containing the attractor point $\langle n,-\rangle$ for $n$
and such that $n \in \Z_{\geq 1} n_{\sigma}$. 
By Proposition \ref{prop_initial_scattering}
applied to $\phi$, we have 
\begin{equation}\label{eq:1}I_{\phi,n}= (\phi(\sigma))_n\,.
\end{equation}
Let $\Delta \subset \Supp(\fh)$ be the subset of 
primitive $m \in \Supp(\fh)$ such that $p(m) 
\in \Z_{\geq 1}n_{\sigma}$. 
By Proposition \ref{prop_basic_tool}, for every primitive 
$m \in \Supp(\fh)$, the hyperplane
$m^{\perp}$ contains the cone $q(\sigma)$ 
if and only if $m \in \Delta$. 

Let $\fp \colon [0,1] \rightarrow \cM_{\R}$
be a $\fh$-generic path with $\fp(0) \in (q(\sigma))^+$ and 
$\fp(1) \in (q(\sigma))^-$. 
For every $m\in \Delta$, we have 
$\theta(m)>0$ for every $\theta \in (q(\sigma))^+$
and $\theta(m)<0$ for every $\theta \in (q(\sigma))^-$.
Therefore, up to straightening $\fp$, one can assume that for every $m\in \Delta$, the path $\fp$ intersects the hyperplane $m^\perp$ exactly once. 
We can also assume that for every 
$m \in \Delta$, the intersection of $\fp$ with 
$m^{\perp}$ lies in a wall $\fd_{m} \subset m^{\perp}$
containing the cone $q(\sigma)$.
For every $m, m' \in \Delta$, we have 
$\eta(m,m')=
\langle p(m),p(m')\rangle=0$, and so 
$[\rho(\fd_{m}),\rho(\fd_{m'})]=0$.
Thus it follows from the definition 
\eqref{eq:def_bar_phi}
of $\phi_\rho$ that 
\begin{equation} \label{eq:2}
    \phi_\rho(\sigma)_n = \sum_{\substack{m \in \Delta, \,k \in \Z_{\geq 1}\\ p(km)=n}} \nu (\rho(\fd_{m})_{km}) \,.
\end{equation}
By Proposition \ref{prop_comparison_attractor_points}, for every 
$m \in \Delta$ and  $k \in \Z_{\geq 1}$ such that  $p(km)=n$,
we have $\iota_{km}\eta=q(\langle n,-\rangle) 
\in q(\sigma) \subset \fd_{m}$. 
We deduce from Proposition \ref{prop_initial_scattering} applied to 
$\rho$ that 
\begin{equation}\label{eq:3}
\rho(\fd_{m})_{km}
= I_{\rho, km}\,.
\end{equation}
Equation \eqref{eq:lem_initial} follows from  \eqref{eq:1}-\eqref{eq:2}-\eqref{eq:3}.
\end{proof}

\begin{definition} \label{def_specialization}
Given a map $\varphi \colon \prod_{i \in I}\fh_{e_i} \rightarrow \fh_{e_I}$, the \emph{specialization} of $\varphi$ is the map 
$\hat{\varphi} \colon \prod_{n \in \overline{\Gamma}} \fg_n \rightarrow \fg_\gamma$ defined as follows. For $(x_n)_{n \in\overline{\Gamma}}
\in \prod_{n \in\overline{\Gamma}} \fg_n$, define  
$(y_i)_{i\in I} \in \prod_{i\in I} \fh_{e_i}$ by $y_i \coloneqq x_{p(e_i)}$, where $p \colon \cN \rightarrow N$ is as in \eqref{Eq: p}, and set
\begin{equation} \hat{\varphi} \left((x_n)_{n \in\overline{\Gamma}} \right) \coloneqq \varphi 
\left((y_i)_{i\in I} \right)\,. 
\end{equation}
\end{definition}

\begin{theorem} \label{thm_D_D_gamma}
Let $\fd \in \Wall_{\Supp(\fg)}$ be a wall in 
$M_\R$ and 
$\Gamma=\{\gamma_i\}_{i\in I}\in \mult(N^+)$ a multiset of elements in $N^+$ 
such that 
$\fd \subset \gamma^{\perp}$, where $\gamma=\sum_{i \in I} \gamma_i$.
Let $\Gamma_e =\{e_i\}_{i \in I} \in \mult(\cN^+)$, and $\fe \in \Wall_{\Supp(\fh)}$ a wall in $\cM_\R$ such that $\fe \subset e_I^{\perp}$ and containing the image $q(\fd)$ of $\fd$ by the map 
$q \colon M_\R \rightarrow \cM_\R$ as in \eqref{Eq: q}.
Then, the maps $F_{\Gamma}^{\fg,\fd}$
in \eqref{eq: F_gamma_} and $F_{\Gamma_e}^{\fh, \fe}$ in \eqref{eq: F_gamma_e} satisfy
\begin{equation}
F_{\Gamma}^{\fg,\fd}
=\frac{1}{\prod_{n \in N^+}m_\Gamma(n)!}
\hat{F}_{\Gamma_e}^{\fh, \fe}\,,
\end{equation}
where $\hat{F}_{\Gamma_e}^{\fh, \fe}$ is the specialization of $F_{\Gamma_e}^{\fh, \fe}$ as in Defn.\ \ref{def_specialization}.
\end{theorem}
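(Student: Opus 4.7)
By Proposition \ref{prop_scattering_tilde}, $F_\Gamma^{\fg,\fd}=F_\Gamma^{\tilde\fg,\tilde\fd}$, so the task reduces to showing $F_\Gamma^{\tilde\fg,\tilde\fd}=\frac{1}{\prod_n m_\Gamma(n)!}\hat F_{\Gamma_e}^{\fh,\fe}$. The plan is to test both sides against an arbitrary family $(y_i)_{i\in I}\in\prod_i\fh_{e_i}$ via an auxiliary consistent $(\cN^+,\fh)$-scattering diagram $\rho$ and its image $\phi_\rho$ under the construction \eqref{eq:def_bar_phi}.

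Let $\rho$ be the unique consistent $(\cN^+,\fh)$-scattering diagram with $I_{\rho,e_i}=y_i$ for $i\in I$ and $I_{\rho,m}=0$ otherwise, which exists by Proposition \ref{prop_scattering_from_initial}. As $\{e_i\}_{i\in I}$ is a basis of $\cN$, the only multiset of $\cN^+$-elements supported on $\{e_i\}$ and summing to $e_I$ is $\Gamma_e$ itself, so Proposition \ref{prop_scattering_from_initial_explicit} applied to $\rho$ at $\fe$ and grade $e_I$ gives
\[
    \rho(\fe)_{e_I}=F_{\Gamma_e}^{\fh,\fe}\bigl((y_i)_{i\in I}\bigr),
\]
which is multilinear in the $y_i$. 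Meanwhile, Lemma \ref{lem_initial_bar_no_bar} gives $I_{\phi_\rho,n}=\sum_{i:\gamma_i=n}y_i$, and Proposition \ref{prop_scattering_from_initial_explicit} applied to $\phi_\rho$ yields
\[
    \phi_\rho(\tilde\fd)_\gamma=\sum_{\Gamma'\in\mult(N^+),\ \sum_n n=\gamma}F_{\Gamma'}^{\tilde\fg,\tilde\fd}\Bigl(\bigl(\textstyle\sum_{i:\gamma_i=n}y_i\bigr)_{n}\Bigr).
\]
Since each $\gamma_i\in N^+$ is nonzero, $e_I$ is the unique element of $\cN_e^+$ with $p(e_I)=\gamma$; combining this with \eqref{eq:def_bar_phi} and a careful analysis of the path-ordered product defining $\overline\rho(q(\tilde\fd))$, one identifies $\phi_\rho(\tilde\fd)_\gamma=F_{\Gamma_e}^{\fh,\fe}\bigl((y_i)_{i\in I}\bigr)$.

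The proof concludes by equating the two expressions for $\phi_\rho(\tilde\fd)_\gamma$ and extracting the coefficient of the multilinear monomial $y_1 y_2\cdots y_r$. On the right, the coefficient is simply $F_{\Gamma_e}^{\fh,\fe}((y_i)_i)$, since it is already multilinear. On the left, only $\Gamma'=\Gamma$ contributes, because any other $\Gamma'$ with $\sum_n n=\gamma$ would force some $m_{\Gamma'}(n)>m_\Gamma(n)$, incompatible with multi-degree $(1,\ldots,1)$ in the $y_i$; expanding $\bigl(\sum_{i:\gamma_i=n}y_i\bigr)^{m_\Gamma(n)}$ via the multinomial theorem produces the factor $\prod_n m_\Gamma(n)!$ multiplying the symmetric polarization $B$ of $F_\Gamma^{\tilde\fg,\tilde\fd}$ evaluated on the $y_i$. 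Substituting $y_i\mapsto x_{\gamma_i}$ then identifies $B\bigl((x_{\gamma_i})_i\bigr)$ with $F_\Gamma^{\tilde\fg,\tilde\fd}\bigl((x_n)_n\bigr)$, yielding the claimed formula. The main technical obstacle is the identification $\phi_\rho(\tilde\fd)_\gamma=F_{\Gamma_e}^{\fh,\fe}((y_i)_i)$ above: this is not the naive assertion that $\overline\rho(q(\tilde\fd))$ equals $\rho(\fe)$ (which fails for higher-codimension cones in general), but rather the refined statement that, with our specific initial data for $\rho$, the $e_I$-component of the path-ordered product around $q(\tilde\fd)\subset\fe$ reduces to $\rho(\fe)_{e_I}$, which must be extracted by organizing Baker--Campbell--Hausdorff corrections using the multilinear-in-$(y_i)$ structure of the wall-values of $\rho$ and the local consistency at joints contained in $q(\tilde\fd)$.
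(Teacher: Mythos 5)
Your overall architecture matches the paper's proof closely: reduce to $\tilde\fg$ via Proposition~\ref{prop_scattering_tilde}, feed in a free auxiliary consistent $(\cN^+,\fh)$-scattering diagram $\rho$ and its pushforward $\phi_\rho$, compare the two expansions in initial data via Lemma~\ref{lem_initial_bar_no_bar}, and close with a polarization/multinomial argument (which is exactly the ``Claim'' proved at the end of the paper's proof). The pieces $e_I\in\cN_e^+$ being the unique preimage of $\gamma$ and the multilinearity of $F_{\Gamma_e}^{\fh,\fe}$ are correctly exploited.

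However, there is a genuine gap at the very step you yourself flag as ``the main technical obstacle,'' namely the identity $\phi_\rho(\tilde\fd)_\gamma=\rho(\fe)_{e_I}$. You assert it and gesture at ``organizing BCH corrections using multilinearity and local consistency,'' but this does not establish it; multilinearity alone cannot kill BCH correction terms, since an iterated bracket of linear-in-$y_i$ wall contributions landing in $\fh_{e_I}$ would still be multilinear of total degree $r$. What actually makes the correction terms vanish is a concrete commutativity phenomenon: define $\Delta\subset\Supp(\fh)$ to be the set of primitive $m$ with $p(m)\in\Z_{\geq 1}n_{\tilde\fd}$; by Proposition~\ref{prop_basic_tool} the extension path $\fp$ defining $\overline\rho(q(\tilde\fd))$ can be taken close to $q(\tilde\fd)$ so that it only crosses walls $\fd_m$ with $m\in\Delta$. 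For any $m,m'\in\Delta$ one has
\[
\eta(m,m')=\langle p(m),p(m')\rangle=0
\]
because $p(m),p(m')$ are both proportional to $n_{\tilde\fd}$, hence $[\rho(\fd_m),\rho(\fd_{m'})]=0$ by the condition \eqref{eq:skew_lie} inherited by $\fh$. This makes the path-ordered product literally abelian, so its logarithm is the plain sum of wall contributions and its $e_I$-component is $\rho(\fe)_{e_I}$ (using, as you correctly note, that $e_I$ is the only element of $\Supp(\fh)$ mapping to $\gamma$ under $p$, and that $km\notin\cN_e^+$ for $k\geq 2$). Without this commutativity observation — the real content of step (b) — the proof does not go through.
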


\begin{proof}
By Proposition \ref{prop_scattering_tilde}, it is enough to show that 
\begin{equation} \label{eq:comb}
F_{\Gamma}^{\tilde{\fg},\tilde{\fd}}
=\frac{1}{\prod_{n \in N^+}m_\Gamma(n)!}
\hat{F}_{\Gamma_e}^{\fh, \fe}\,.
\end{equation}
Let $\Delta \subset \Supp(\fh)$ be the subset of primitive 
$m \in \Supp(\fh)$ such that $p(m) \in \Z_{\geq 1} n_{\tilde{\fd}}$. 
As $p(e)=\gamma$, we have $e \in \Delta$.
By Proposition \ref{prop_basic_tool}, for primitive 
$m \in \Supp(\fh)$, the hyperplane
$m^{\perp}$ contains the cone $q(\tilde{\fd})$ 
if and only if $m \in \Delta$. 
Arguing as in the proof of Lemma \ref{lem_initial_bar_no_bar}, one can find a $\fh$-generic path 
$\fp \colon [0,1] \rightarrow \cM_{\R}$
with $\fp(0) \in (q(\tilde{\fd}))^+$, 
$\fp(1) \in (q(\tilde{\fd}))^-$, 
and such that for every $m \in \Delta$, 
the path $\fp$ intersects the hyperplane 
$m^{\perp}$ at a single point, lying in a wall 
$\fd_{m} \subset m^{\perp}$ which contains the cone $q(\tilde{\fd})$. 
We can also assume that $\fd_e=\fe$.

Let 
$\rho \colon \Wall_{\Supp(\fh)} \rightarrow \fh$
be a consistent $(\cN^+,\fh)$-scattering diagram and 
$\phi_\rho \colon \Wall_{\Supp(\tilde{\fg})} \rightarrow \tilde{\fg}$
the corresponding consistent $(N^+, \tilde{\fg})$-scattering diagram defined by \eqref{eq:def_bar_phi}.
As in the proof of Lemma \ref{lem_initial_bar_no_bar},
for every $m, m' \in \Delta$, we have 
$[\rho(\fd_{m}),\rho(\fd_{m'})]=0$
and so it follows from the definition \eqref{eq:def_bar_phi} of $\phi_\rho$ that 
\begin{equation} \label{eq:4}
    \phi_\rho(\tilde{\fd})_\gamma = \sum_{\substack{m \in \Delta, \,k \in \Z_{\geq 1}\\ p(km)=\gamma}} \nu (\rho(\fd_{m})_{km}) \,.
\end{equation}
We show below that the equality \eqref{eq:comb}
follows from identifying on both sides of \eqref{eq:4}
the terms homogeneous of degree $m_\Gamma(n)$ in the initial data $I_{\phi_\rho,n}$.

By Proposition \ref{prop_scattering_from_initial_explicit}
applied to $\phi_\rho$, we have 
\begin{equation} \label{eq:bar_phi}
    \phi_\rho(\tilde{\fd})_\gamma
    =
    \sum_{\substack{\Gamma'=\{\gamma'\}\in \mult(N^+) \\
    \gamma' \in \Supp(\tilde{\fg}),\,\, \sum_{\gamma'\in \Gamma'} \gamma'=\gamma}}
    F_{\Gamma'}^{\tilde{\fg},\tilde{\fd}}\left(
    (I_{\phi_\rho, \gamma'})_{\gamma' \in \Gamma'}\right)   \,.
\end{equation}
The only term homogeneous of degree $m_\Gamma(n)$ in the initial data $I_{\phi_\rho,n}$ in \eqref{eq:bar_phi} is obtained for $\Gamma'=\Gamma$ and is equal to $F_{\Gamma}^{\tilde{\fg},\tilde{\fd}}\left(
    (I_{\phi_\rho, n})_{n \in\Gamma}\right)$.

On the other hand, by Proposition \ref{prop_scattering_from_initial_explicit}
applied to $\rho$, the right-hand side
of \eqref{eq:4} is equal to
\begin{equation} \label{eq:6}
    \sum_{\substack{m \in \Delta, \,k \in \Z_{\geq 1}\\ p(km)=\gamma}} 
    \sum_{\substack{\Gamma'=\{n'\} \in \mult(\cN^+)\\
    n' \in \Supp(\fh)\,,\sum_{n'\in \Gamma'} n'=km}}
     \nu \left( F_{\Gamma'}^{\fh,\fd_m} \left((I_{\rho, n'})_{n' \in \Gamma'} \right) \right) \,.
\end{equation}
The only term homogeneous of degree $1$ in the initial data 
$I_{\rho, e_i}$ in \eqref{eq:6} 
is obtained for $\Gamma'=\Gamma_e$ and is equal to 
$\nu(F_{\Gamma_e}^{\fh,\fe}((I_{\rho, e_i})_{1\leq i\leq r}))$.

Finally, by Lemma 
\ref{lem_initial_bar_no_bar},
we have for every $n \in\Gamma$,
\begin{equation} \label{eq_init}
    I_{\phi_\rho,n}= \sum_{e_i, p(e_i)=n} \nu (I_{\rho, e_i}) \,.
\end{equation}
Note that the sum in \eqref{eq_init} contains 
$m_\Gamma(n)$ terms.
Therefore, \eqref{eq:comb} 
follows from the following algebraic claim applied to 
$(x_i)_i=(I_{\phi_\rho,n})_n$, $(y_{ij})_{ij}=(\nu(I_{\rho,e_i}))_i$, 
$f=F_{\Gamma}^{\tilde{\fg},\tilde{\fd}}$ and $g=\nu(F_{\Gamma_e}^{\fh,\fe})$: \\
\textbf{Claim:} Let $f((x_i)_{1 \leq i\leq s})$ be a polynomial function of $s$ variables which is homogeneous of degree $a_i$ in the variable $x_i$. Write each variable $x_i$ as a sum of $a_i$ variables $y_{ij}$: $x_i=\sum_{j=1}^{a_i} y_{ij}$, and let $g((y_{ij})_{1\leq i\leq r, 1\leq j \leq a_i})$ be  the component of
$f((\sum_{j=1}^{a_i}y_{ij})_{1\leq i\leq s})$ which is homogeneous of degree $1$ in each variable $y_{ij}$. Finally, let $\hat{g}((x_i)_{1\leq i\leq s})$ be the function obtained from $g((y_{ij})_{1\leq i\leq s, 1\leq j \leq a_i})$ by the specialization of variables $y_{ij} \mapsto x_i$, for every $1 \leq i \leq s$ and $1\leq j \leq a_i$. Then, we have \begin{equation}\hat{g}((x_i)_{1\leq i\leq s})=\left(\prod_{i=1}^s a_i!\right)f((x_i)_{1\leq i\leq s})\,.\end{equation}
\textbf{Proof of the claim:} It is enough to prove the result for $f=\prod_{i=1}^s x_i^{a_i}$. For $f=\prod_i x_i^{a_i}$, $g$ is the term proportional to $\prod_{i,j}y_{ij}$ in $\prod_i(\sum_j y_{ij})^{a_i}$. So, \ $g=(\prod_i a_i!)\prod_{i,j}y_{ij}$ and so $\hat{g}=(\prod_i a_i!) \prod_i x_i^{a_i}=(\prod_i a_i!) f$. Hence, the result follows.
\end{proof}

\subsection{$(\cN^+, \fh)$-scattering diagrams and flow tree maps} 
\label{section_proof_thm}

This section includes the technical heart of the paper, Theorem \ref{thm_flow_tropical}. The key result of the paper, the flow tree formula in Theorem \ref{thm_flow_tree_formula_scattering}, will follow from Theorem \ref{thm_flow_tropical} and Theorem \ref{thm_D_D_gamma}.

\subsubsection{Small enough generic perturbations of the skew-symmetric bilinear form}
\label{Small generic perturbations of the skew-symmetric bilinear form}

In this section, we define small enough generic perturbations of the 
skew-symmetric bilinear form 
$\eta \in \bigwedge^2 \cM$ defined by \eqref{Eq: eta}.

\begin{definition} \label{def_U_eta}
We denote by $U^\eta$ the set of $\omega \in \bigwedge^2 \cM_\R$
such that for every $n_1, n_2 \in \cN_e^+$ with 
$\eta(n_1,n_2)$ nonzero, $\omega(n_1,n_2)$
is nonzero and has the same sign as
$\eta(n_1,n_2)$.
We have $\eta \in U^\eta$ and $U^\eta$ is an open neighborhood of $\eta$ in 
$\bigwedge^2 \cM_\R$.
\end{definition}

For a fixed 
$(I,\eta)$-generic point $\alpha \in e_I^{\perp} \subset \cM_\R$
as in Defn.\ \ref{def_J_generic}, 
we call a perturbation $\omega$ of $\eta$ \emph{generic} if it belongs to the open dense subset $U_{I,\alpha}\subset \bigwedge^2 \cM_\R$, as in Defn.\ \ref{Def: alpha generic delta}, and we say that the perturbation is \emph{small enough} if
$\omega$ belongs to the open neighborhood 
$U^\eta \subset \bigwedge^2 \cM_\R$, as in Defn.\ \ref{def_U_eta}. Hence, $\omega$ is a \emph{small enough generic perturbation} of $\eta \in \bigwedge^2 \cM$ if
\begin{equation}
    \label{Eq: small and generic}
\omega \in U_{I,\alpha} \cap U^{\eta}\,.
\end{equation}

\subsubsection{Embedding treees in $\cM_\R$ via the discrete attractor flow}
We fix a $(I,\eta)$-generic point $\alpha \in e_I^{\perp}\subset \cM_\R$ as in
Defn.\ 
\ref{def_J_generic} and $\omega \in U_{I,\alpha}$ as in Defn. \ref{Def: alpha generic delta}.
In this section we use the discrete attractor flow defined in \S\ref{section_discrete_flow} to define an embedding of binary trees in $\cM_\R$ as follows.
For every tree $T \in \cT_I$, where $\cT_I$ is defined as in Lemma \ref{lem_cardinal_T}, we denote by $T^{\circ}$
the graph obtained from $T$ by removing all the leaves 
$v \in V_T^L$, and extending the resulting open intervals to unbounded edges.
For every tree $T \in \cT_I$, we fix a continuous map 
\begin{equation}
    \label{Eq: tropical curve j}
j_T^{\alpha,\omega} \colon T^{\circ} \longrightarrow \cM_{\R}    
\end{equation}
such that:
\begin{enumerate}
    \item 
for every vertex $v \in \mathcal{R}_T \cup V_T^\circ$, we have
\begin{equation} \label{eq:j_T_vertex}
j_T^{\alpha,\omega}(v)=\theta_{T,v}^{\alpha, \omega}\,.
\end{equation}
    \item for every bounded edge $E$ of $T^\circ$, connecting vertices 
    $v$ and $v'$, the image of the map $j_T^{\alpha,\omega}$ restricted to $E$ is the line segment in $\cM_\R$ with endpoints $\theta_{T,v}^{\alpha, \omega}$ and $\theta_{T,v'}^{\alpha, \omega}$.
    \item for every 
unbounded edge $E$ of $T^\circ$ obtained by removing the leaf decorated by $e_i$,  the image of the map $j_T^{\alpha,\omega}$ restricted to $E$
is the half-line $\theta_{T,v}^{\alpha, \omega} + \R_{\geq 0} \,
    \iota_{e_i} \omega$ in $\cM_\R$, where $v$ is the vertex in $V_T^\circ$ incident to $E$.
\end{enumerate}

\begin{remark} \label{rem_tropical_curves}
For every tree $T \in \cT_I$, the embedded graph $j_T^{\alpha,\omega}(T^\circ) \subset \cM_\R$ in \eqref{Eq: tropical curve j} defined using the discrete flow has a natural structure of \emph{tropical disks} in $\cM_\R$ \cite{MR2259922,MR2667135, cps} if $\omega \in \bigwedge^2 \cM \otimes_\Z \Q \subset \bigwedge^2 \cM_\R$: edges have then rational weighted directions of the form $\iota_{e_v} \omega$ and the tropical balancing condition at vertices distinct from the root follows from the relation $e_v=e_{v'}+e_{v''}$ in Defn. \ref{Def: charge}.  
\end{remark}

\begin{proposition} \label{prop_not_contracted}For every tree $T \in \cT_I^\eta$ and interior vertex $v \in V_T^\circ$, we have $j_T^{\alpha,\omega}(v) \notin j_T^{\alpha,\omega}(p(v))$, that is, the edge connecting $v$ and $p(v)$ is not contracted to a point by $j_T^{\alpha,\omega}$.\end{proposition}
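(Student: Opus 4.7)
The plan is to reduce the claim that the edge from $p(v)$ to $v$ is not contracted to a point to showing that the vector
\[
\theta_{T,v}^{\alpha,\omega} - \theta_{T,p(v)}^{\alpha,\omega} \in \cM_\R
\]
is nonzero. Since $v \in V_T^\circ$, the recursion \eqref{eq_discrete_flow_2} for the discrete attractor flow applies and expresses this difference as
\[
\theta_{T,v}^{\alpha,\omega} - \theta_{T,p(v)}^{\alpha,\omega}
= -\,\frac{\theta_{T,p(v)}^{\alpha,\omega}(e_{v'})}{\omega(e_v,e_{v'})}\,\iota_{e_v}\omega.
\]
So what must be checked is that the scalar in the numerator is nonzero, that the denominator is nonzero (so the expression is even defined), and that the covector $\iota_{e_v}\omega \in \cM_\R$ is nonzero.

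Each of these three verifications will follow immediately from the genericity assumptions. The denominator $\omega(e_v,e_{v'})$ is nonzero by Proposition \ref{prop_U_open_dense}(ii) applied to $\omega \in U_I$, which holds because $\omega \in U_{I,\alpha} \subset U_I$. The numerator $\theta_{T,p(v)}^{\alpha,\omega}(e_{v'})$ is nonzero by the $(I,\alpha)$-genericity of $\omega$ in Definition \ref{Def: alpha generic delta}, applied to $T \in \cT_I^\eta$ and the interior vertex $v$; note that the boundary case in which $p(v)$ is the root is covered uniformly, since there the condition reduces via \eqref{eq_discrete_flow_1} to $\alpha(e_{v'}) \neq 0$, which is precisely the $(I,\eta)$-genericity of $\alpha$ from Definition \ref{def_J_generic} (recalling that $T \in \cT_I^\eta$ ensures $\eta(e_{v'},e_{v''}) \neq 0$). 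Finally, $\iota_{e_v}\omega$ is nonzero as a covector, since its value on $e_{v'}$ is the scalar $\omega(e_v,e_{v'})$, which we already know is nonzero.

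There is no real obstacle; the proposition is essentially a bookkeeping check that Definitions \ref{def_J_generic} and \ref{Def: alpha generic delta}, combined with the explicit recursion of the discrete flow, exclude exactly the degeneracy in which an edge would collapse. The only conceptually subtle point is recognizing that both the case $p(v) = $ root and the case $p(v) \in V_T^\circ$ are treated by the single condition $\theta_{T,p(v)}^{\alpha,\omega}(e_{v'}) \neq 0$ built into the definition of $(I,\alpha)$-genericity; once that is observed the proof is a one-line calculation.
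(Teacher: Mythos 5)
Your proof is correct and takes essentially the same approach as the paper: both rest on the fact that $\theta_{T,p(v)}^{\alpha,\omega}(e_{v'}) \neq 0$, guaranteed by $(I,\alpha)$-genericity of $\omega$, together with the recursion \eqref{eq_discrete_flow_2}. The paper states this in one line; you spell out the remaining details (nonvanishing of the denominator $\omega(e_v,e_{v'})$ and of the covector $\iota_{e_v}\omega$), which the paper leaves implicit — a slightly cleaner shortcut is to note that $\theta_{T,v}^{\alpha,\omega}(e_{v'})=0$ by Lemma \ref{lem_attractor} while $\theta_{T,p(v)}^{\alpha,\omega}(e_{v'})\neq 0$, so the two points already differ when evaluated on $e_{v'}$.
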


\begin{proof}
From the assumption $\omega \in U_{I,\alpha}$
and Defn.\ \ref{Def: alpha generic delta} of $U_{I,\alpha}$, we have $\theta_{T,p(v)}^{\alpha,\omega}(e_{v'}) \neq 0$, and so $\theta_{T,p(v)}^{\alpha, \omega} \neq \theta_{T,v}^{\alpha,\omega}$ by \eqref{eq_discrete_flow_2}.\end{proof}

\begin{definition}
We denote by $F^{\alpha,\omega}$ the union of all the images of the trees $T^{\circ}$ by the maps $j_T^{\alpha,\omega}$ for  
$T \in \cT_I^\eta$:  
\begin{equation} 
\label{eq:the_forest}    
F^{\alpha,\omega} \coloneqq \bigcup_{T \in \cT_I^\eta} j_T^{\alpha,\omega}(T^{\circ})     \subset \cM_{\R}\,.\end{equation}
We view $F^{\alpha,\omega}$ as a graph embedded in $\cM_{\R}$. 
Note that we have $\alpha \in F^{\alpha,\omega}$ because
$\alpha$ is the common image by the maps 
$j_T^{\alpha,\omega}$ of the roots of the trees 
$T \in \cT_I^\eta$. 
\end{definition}

\subsubsection{Scattering diagrams via flow tree maps}
Now we are ready to state our main theorem of this section, that allows us to describe scattering diagrams in terms of flow tree maps. This is the technical heart of this paper.

\begin{theorem} \label{thm_flow_tropical}
Fix a $(I,\eta)$-generic point 
$\alpha \in e_I^{\perp} \subset \cM_\R$ as in Defn.\ \ref{def_J_generic} and a small enough generic perturbation $\omega \in U_{I,\alpha} \cap U^\eta$ of $\eta$ as in \S \ref{Small generic perturbations of the skew-symmetric bilinear form}. Let $J \subset I$ be a nonempty index set, and $x \in e_J^{\perp}$ a $(J,\eta)$-generic point such that $x \in F^{\alpha, \omega}$ and  the line segment 
$(x+\R \iota_{e_J}\omega) \cap F^{\alpha,\omega}$ is not a point. Let 
$\sigma \in \Wall_{\Supp(\fh)}$ be a wall containing $x$ and with normal vector
$n_\sigma=e_J$. Then for every consistent $(\cN^+,\fh)$-scattering diagram 
$\phi$ constructed from initial data $I_{\phi,n}$ that satisfies $I_{\phi,n}=0$ if $n \notin \{e_i\}_{i\in I}$, we have 
\begin{equation} \label{eq_thm}
    \phi(\sigma)_{e_J}
    = A_J^{x,\omega}\left( (I_{\phi,e_i})_{i \in J} \right)
\end{equation}
where $\phi(\sigma)_{e_J} \in \fh_{e_J}$ is the component of $\phi(\sigma) \in \fh$ in $\fh_{e_J}$, and $A_J^{x,\omega}$ is the flow tree map
with initial point $x$ as in Defn.\ \ref{def_final_A}.
\end{theorem}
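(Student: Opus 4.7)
The plan is to prove Theorem \ref{thm_flow_tropical} by induction on $|J|$.

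For the base case $|J|=1$, write $J=\{i\}$; then $\cT_J^\eta$ contains only the trivial tree, so $A_J^{x,\omega}$ is the identity map $\fh_{e_i}\to\fh_{e_i}$, and the statement reduces to $\phi(\sigma)_{e_i}=I_{\phi,e_i}$. Since $\fh$ is supported on $\cN_e^+$, the only element of $\cN_e^+$ proportional to $e_i$ is $e_i$ itself, and the $e_i$-component of $\phi(\sigma)$ is forced to coincide with the initial datum $I_{\phi,e_i}$ by Proposition \ref{prop_initial_scattering}.

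For the inductive step, assume the theorem for all nonempty proper subsets of $J$. The idea is to read off $\phi(\sigma)_{e_J}$ by traversing the ray $x+\R_{\geq 0}\iota_{e_J}\omega$ and summing the scattering contributions at each joint encountered. For each unordered partition $J=J'\sqcup J''$, the line $x+\R\iota_{e_J}\omega$ meets $e_{J'}^\perp=e_J^\perp\cap e_{J''}^\perp$ at the unique point
\[ x_{J',J''}\coloneqq x-\frac{x(e_{J'})}{\omega(e_J,e_{J'})}\iota_{e_J}\omega, \]
and a direct sign computation using $x(e_{J'})=-x(e_{J''})$ and $\omega(e_J,e_{J'})=\omega(e_{J''},e_{J'})$ identifies the property that $x_{J',J''}$ lies strictly downstream of $x$ in direction $\iota_{e_J}\omega$ with the nonvanishing of the sign $\epsilon_{J',J''}^{x,\omega}$ attached by Defn.\ \ref{def_flow_tree_map} to any tree $T\in\cT_J^\eta$ whose top-level split is $\{J',J''\}$. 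Because $x\in F^{\alpha,\omega}$ lies on an edge in direction $\iota_{e_J}\omega$, each such downstream $x_{J',J''}$ coincides with a vertex $\theta_{T_0,v}^{\alpha,\omega}$ of some $T_0\in\cT_I^\eta$ whose child vertex $v$ satisfies $e_v=e_J$ and carries children of charges $e_{J'},e_{J''}$; the genericity $\omega\in U_{I,\alpha}\cap U^\eta$ together with the $(J,\eta)$-genericity of $x$ ensure that distinct splits produce distinct joints and that exactly three walls of $\phi$, with normals $e_{J'},e_{J''},e_J$, meet at $x_{J',J''}$.

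At each such joint the local consistency of $\phi$ reduces to the primitive wall-crossing identity: since $\fh$ is concentrated on $\cN_e^+$ and $e_{J'}+e_{J''}=e_J$ is the only relation in $\cN_e^+$ involving these gradings, higher Baker--Campbell--Hausdorff corrections vanish. Denoting by $\sigma^+,\sigma^-$ the two walls with normal $e_J$ adjacent to the joint on the upstream/downstream sides and by $\sigma_{J'},\sigma_{J''}$ the adjacent walls with normals $e_{J'},e_{J''}$, the consistency condition yields
\[ \phi(\sigma^+)_{e_J}-\phi(\sigma^-)_{e_J}=\epsilon_{J',J''}^{x,\omega}\,[\phi(\sigma_{J'}),\phi(\sigma_{J''})]. \]
Telescoping these jumps over all downstream joints along the ray, and using that sufficiently far downstream the ray leaves $F^{\alpha,\omega}$ so the $e_J$-component of the wall vanishes (the initial data is concentrated in degrees $e_i$ with $i\in I$), one obtains
\[ \phi(\sigma)_{e_J}=\sum_{\{J',J''\}}\epsilon_{J',J''}^{x,\omega}\,[\phi(\sigma_{J'}),\phi(\sigma_{J''})]. \]
Applying the inductive hypothesis to $J'$ and $J''$ at the points $x_{J',J''}+s\iota_{e_{J'}}\omega$ and $x_{J',J''}+s\iota_{e_{J''}}\omega$ for $s>0$ small enough that the defining signs are unchanged, each adjacent wall element identifies with $A_{J'}^{x_{J',J''},\omega}$ resp.\ $A_{J''}^{x_{J',J''},\omega}$ evaluated on the initial data. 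The resulting expression coincides with the recursive expansion of $A_J^{x,\omega}$ obtained by grouping trees $T\in\cT_J^\eta$ according to the top-level split at the unique child of the root, closing the induction.

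The main obstacle is the bookkeeping at the joint: one must verify that the orientation-dependent sign produced by the primitive wall-crossing formula, when read from the cyclic arrangement of the three walls around $x_{J',J''}$ in the plane orthogonal to the joint, matches exactly the sign $\epsilon_{J',J''}^{x,\omega}$ of Defn.\ \ref{def_flow_tree_map}, encoded through $\sgn(x(e_{J'}))$ and $\sgn(\omega(e_{J'},e_{J''}))$. A secondary point is verifying that the auxiliary points $x_{J',J''}+s\iota_{e_{J'}}\omega$ genuinely lie in $F^{\alpha,\omega}$ and are themselves sufficiently generic for the inductive hypothesis; this follows because the two edges outgoing from the vertex $v$ of $T_0$ are by construction present in $F^{\alpha,\omega}$, and the relevant sign conditions depend only on the side of $e_K^\perp$ on which the point lies, which is invariant under small perturbation along the $e_{J'}$-wall.
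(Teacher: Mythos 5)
Your proposal follows the same overall strategy as the paper: induction on $|J|$, traversal of the ray $x+\R_{\geq 0}\iota_{e_J}\omega$, identification of the jump of the $e_J$-component at each joint with a single commutator via the primitive wall-crossing identity, downstream vanishing, and reassembly into the recursive tree-sum defining $A_J^{x,\omega}$. This is indeed the structure of the paper's proof (Steps I--IV after the base case). However, the proposal has genuine gaps at exactly the points you flag but do not resolve, and at one point where the reasoning is incorrect.

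First, the sign computation is the computational heart of the theorem and cannot be left as an ``obstacle.'' You must actually work in the $2$-plane transverse to the joint spanned by $\iota_{e_{J'}}\omega$ and $\iota_{e_{J''}}\omega$, orient a loop around the joint, read off the three signs $\epsilon_1,\epsilon_2,\epsilon$ entering the path-ordered product, and verify $\epsilon\epsilon_1\epsilon_2=-\sgn(\omega(e_{J'},e_{J''}))$; this then combines with the position of $x_{J',J''}$ on the ray (upstream vs.\ downstream, controlled by $\sgn(x(e_{J'}))$) to give exactly the factor $\epsilon_{T,v}^{x,\omega}$ of Definition~\ref{def_flow_tree_map}. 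Without this computation the proof is incomplete, since the flow tree formula hinges on this sign coincidence.

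Second, your justification of the downstream vanishing is wrong as stated. The fact that the ray eventually leaves the union of embedded trees $F^{\alpha,\omega}$ says nothing about the $e_J$-component of walls far downstream: $F^{\alpha,\omega}$ is an auxiliary set used to locate joints, not a support locus for the scattering diagram. The correct argument (Proposition~\ref{prop_reduction_initial}) uses that, since $\omega\in U^{\eta}$, the wall $\sigma_\infty$ containing $x+t\,\iota_{e_J}\omega$ for $t\gg 0$ must also contain $\iota_{e_J}\eta$ (there is no hyperplane $n^{\perp}$ with $n\in\cN_e^+$ separating $\iota_{e_J}\omega$ from $\iota_{e_J}\eta$), hence is an attractor wall and carries $\phi(\sigma_\infty)_{e_J}=I_{\phi,e_J}=0$ because $|J|>1$. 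This is where the ``small enough'' half of the hypothesis on $\omega$ is actually used; your proposal never invokes it.

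Third, the applicability of the inductive hypothesis at $x_{J',J''}$ needs a real argument, not a remark. You must check that $x_{J',J''}$ is $(J',\eta)$-generic and $(J'',\eta)$-generic (this uses that no three-part decomposition of $J$ can vanish simultaneously at $x_{J',J''}$, which is the content of the paper's Lemma~\ref{lem_no_triple_intersection} and relies on $\omega\in U_{I,\alpha}$), and that $(x_{J',J''}+\R\,\iota_{e_{J'}}\omega)\cap F^{\alpha,\omega}$ is a nondegenerate segment. For the latter one must actually construct a tree in $\cT_I^\eta$ whose embedding passes through $x_{J',J''}$ with an outgoing edge in direction $\iota_{e_{J'}}\omega$ of positive length; the positive length is not automatic and is precisely what the $U_{I,\alpha}$ genericity condition is designed to guarantee (Proposition~\ref{prop_not_contracted}). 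Finally, your claim that ``exactly three walls\dots meet at $x_{J',J''}$'' and that distinct splits yield distinct joints is stronger than what is provable and stronger than what is needed: several two-part splits can in principle pass through the same joint, and the argument should be organized as a sum over all splits at each joint rather than assuming a bijection between splits and joints.
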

\begin{proof}
The proof is done by induction on the cardinality of the subset
$J \subset I$. For the initial step of the induction, let $J$ be a singleton, that is, \ $J=\{i\}$ for some
$i\in J$. Then by Lemma \ref{lem_cardinal_T}, 
$\cT_J$ consists of a single tree $T$, with one root and one leg connected by a single edge. Therefore by item (1)
of Defn.\ \ref{def_flow_tree_map}, the map 
$A_{J,T}^{x,\omega} \colon \fg_{e_i} \rightarrow \fg_{e_i}$
is the identity map. Hence, $A_J^{x,\omega}(I_{\phi,e_i})=I_{\phi,e_i}$. On the other hand, let $\sigma$ be a wall with $n_{\sigma}=e_i$. 
As $e_i$ does not admit any non-trivial decomposition as a sum of elements of $\Supp(\fh) \subset \cN_e^+$, it follows from the algorithmic construction of scattering diagrams
from initial data reviewed in the proof of
Proposition \ref{prop_scattering_from_initial_explicit} that 
$\phi(\sigma)_{e_i}=I_{\phi,e_i}$ for every consistent $(\cN^+,\fh)$-scattering diagram $\phi$. Therefore, we conclude
$\phi(\sigma)_{e_i} = A_J^{x,\omega}(I_{\phi,e_i})$,
and hence the initial step of the induction.

For the induction step, let $J \subset I$ of cardinality $|J|>1$. 
We assume that Theorem \ref{thm_flow_tropical} holds for every $J' \subset I$ 
with $|J'|<|J|$. Let $\sigma \in \Wall_{\Supp(\fh)}$ be a wall such that $n_\sigma=e_J$ and 
let $x \in F^{\alpha,\omega} \cap \sigma$ be a $(J,\eta)$-generic point such that 
$(x+\R \iota_{e_J} \omega) \cap F^{\alpha,\omega}$ is a non-trivial line segment.

In the remaining part of the section, we show that the statement of the theorem holds for $J$, $x$, $\sigma$ in the following four steps:
\begin{itemize}
    \item[Step I:] We define a set of \emph{relevant joints} $\cJ$, and show in Lemma \ref{lem_propagation} that if two walls contained in $e_J^{\perp}$ intersect along any joint that is not relevant, then the elements of the Lie algebra $\fh$ associated to these walls are the same. This enables us to partition the hyperplane $e_J^{\perp}$ into regions where any wall in a given region has the same associated element of the Lie algebra, which we denote by $\phi_{i-1,i} \in \fh_{e_J}$ in \eqref{Eq: phiis}, for $i\in \{1,\ldots,k\}$, and $\phi_{k,\infty} \in \fh_{e_J}$ in \eqref{Phikinfty}.
    \item[Step II:] Using the genericity of $\omega$, we prove Lemma \ref{lem_no_triple_intersection} and we obtain \eqref{Eq: difference}, expressing the difference $\phi_{i-1,i} - \phi_{i,i+1}$ in terms of some Lie brackets. On the other hand, using that $\omega$ is close enough to $\eta$, we prove that $\phi_{k,\infty}=0$. 
      \item[Step III:] Using the consistency condition around the relevant joints and the induction hypothesis, we determine explicitly the Lie brackets appearing in \eqref{Eq: difference}.
      \item[Step IV:] Using the explicit expression obtained in Step III  for the difference in \eqref{Eq: difference}, we obtain the expression \eqref{Phikinfty} for $\phi(\sigma)_{e_J}$. This, together with $\phi_{k,\infty} = 0$ shown in Step II, concludes the proof.  
\end{itemize}
We expand each of these steps in the remaining part of this section.

\textbf{Step I}: 
We define the set $\cJ$ of \emph{relevant joints}: a joint 
$\fj \in \fS_{\Supp(\fh)}$, that is, a codimension $2$ cone of 
the cone complex $\fS_{\Supp(\fh)}$ is \emph{relevant} if there exists a subindex set $J' \subset J$ with 
$\fj \subset e_{J'}^\perp \cap e_J^{\perp}$ and $\eta(e_{J'},e_J) \neq 0$. Note that the point $x$ is not contained in a relevant joint because of the assumption that
$x$ is $(J,\eta)$-generic. 
Let $0=t_0 <t_1 <\dots <t_k$ be an increasing sequence of positive real numbers, such that the intersection points of the half-line 
$x+\R_{\geq 0} \iota_{e_J} \omega$ with relevant joints 
$\fj \in \cJ$ correspond to points
\begin{equation}
    \label{Points xi}
x_i=x+t_i \iota_{e_J} \omega \subset e_J^{\perp} \subset \cM_\R, 
\end{equation}
for $i \in \{ 1, \ldots,k\}$, as illustrated in Figures \ref{Fig: walls2d}
and \ref{Fig: walls2d3d}.

\begin{figure}
\center{\scalebox{.3}{\input{walls2d.pspdftex}}}
\caption{Joints on the wall $e_J^{\perp}$, the perturbation $\iota_{e_J}\omega$ of $\iota_{e_J}\eta$, and the half line $x+\RR_{\geq 0}\iota_{e_J}\omega$.}
\label{Fig: walls2d}
\end{figure}

\begin{figure}
\center{\scalebox{.25}{\input{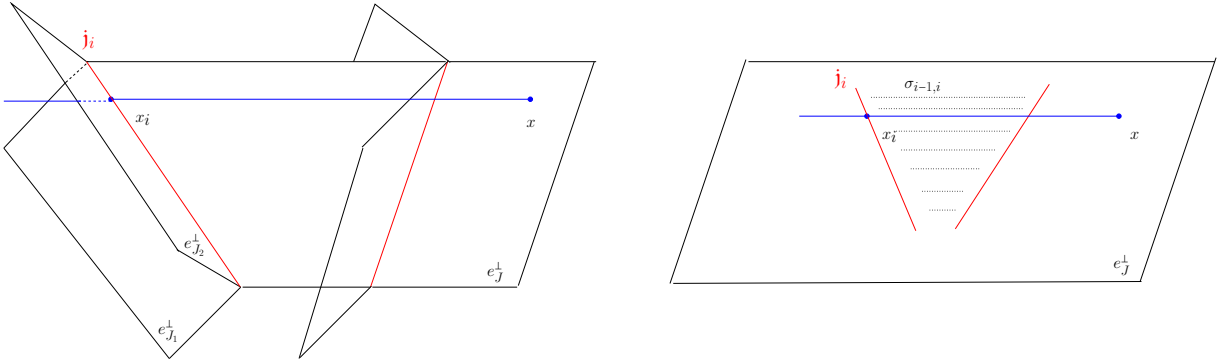}}}
\caption{Walls intersecting along joints on the left and the wall $\sigma_{i-1,i} \subset e_J^{\perp}$ on the right.}
\label{Fig: walls2d3d}
\end{figure}

\begin{lemma} \label{lem_propagation}
Let $\phi$ be a consistent $(\cN^+,\fh)$-scattering diagram, such that $I_{\phi,n}=0$ if 
$n \notin \{e_i\}_{i\in J}$. Let $\sigma_1, \sigma_2 \in \Wall_{\Supp(\fh)}$ such that 
$n_{\sigma_1} =n_{\sigma_2}=e_J$. 
Assume that the intersection $\sigma_1 \cap \sigma_2$ is a joint not belonging to $\cJ$. Then
we have 
$\phi(\sigma_1)_{e_J}=\phi(\sigma_2)_{e_J}$.
\end{lemma}

\begin{proof}
By consistency of $\phi$ applied around the joint 
$\sigma_1 \cap \sigma_2$, the difference $\phi(\sigma')_{e_J}-\phi(\sigma)_{e_J}$
is an element of $\fh_{e_J}$ equal to a sum of iterated Lie brackets in the elements $\phi(\fd_k)$, where $\fd_k \in \Wall_{\Supp(\fh)}$
are the walls containing $\sigma_1 \cap \sigma_2$ apart from 
$\sigma_1$ and $\sigma_2$. As by assumption 
$\sigma_1 \cap \sigma_2 \notin \cJ$, 
for every such wall $\fd_k$, we have either 
$n_{\fd_k}=e_{J'}$ for $J' \subset I$ not contained in $J$, or $n_{\fd_k}=e_{J'}$
with $J' \subset J$ and $\eta(e_J,e_{J'})=0$.
If 
$J' \subset I$ is not contained in $J$, then $[\fh_{e_{J'}},\fh] \cap 
\fh_{e_J}=\{0\}$ and so in this case the wall
$\fd_k$ does not contribute non-trivially to the sum of iterated Lie brackets.
If $J' \subset J$ and $\eta(e_{J'},e_J)=0$, then $\eta(e_{J'},n)=0$ and $[e_{J'},\fh_n]=0$ for every $n \in N$ such that
$e_J=e_{J'}+n$, and so also in this case the wall $\fd_k$ does not contribute to the sum of iterated Lie brackets. We conclude that
$\phi(\sigma_1)_{e_J}-\phi(\sigma_2)_{e_J}=0$.
\end{proof}

By Lemma \ref{lem_propagation}, for any $i \in \{ 1,\ldots,k\}$, if 
$\sigma_1, \sigma_2$ are two walls 
with $n_{\sigma_1}=n_{\sigma_2}=e_J$
such that
$\sigma_1 \cap (x+\R_{\geq 0} \iota_{e_J}\omega)$
and $\sigma_2 \cap (x+\R_{\geq 0} \iota_{e_J}\omega)$ are non-trivial line segments contained in  $x+[t_{i-1},t_{i}] \iota_{e_J}\omega$, then $\phi(\sigma_1)_{e_J}=\phi(\sigma_2)_{e_J}$. We denote by \begin{equation}
\label{Eq: phiis}
    \phi_{i-1,i} \in \fh_{e_J}
\end{equation}
this common value. Note that $\phi(\sigma)_{e_J}=\phi_{0,1}$.
Similarly, for every walls $\sigma_1, \sigma_2$ with $n_{\sigma_1}=n_{\sigma_2}=e_J$
such that 
$\sigma_1 \cap (x+\R_{\geq 0} \iota_{e_J}\omega)$
and $\sigma_2 \cap (x+\R_{\geq 0} \iota_{e_J}\omega)$ are non-trivial line segments contained in  $x+[t_k,\infty) \iota_{e_J} \omega$, we have $\phi(\sigma_1)_{e_J}=\phi(\sigma_2)_{e_J}$, and we denote by
\begin{align}
\label{Phikinfty}
    \phi_{k,\infty} \in \fh_{e_J}
\end{align}
this common value.

\textbf{Step II:}
In this step, we show that the differences between 
$\phi_{i-1,i}$ and $\phi_{i,i+1}$ have the form given by 
\eqref{Eq: difference}, and  we prove that $\phi_{k,\infty}=0$.

\begin{lemma} \label{lem_no_triple_intersection}
Let $\omega \in U_{I,\alpha}$ as in Defn.\ \ref{Def: alpha generic delta}.
Let $J=J_1 \sqcup \dots \sqcup J_s$ be a partition of $J$ in $s$ subsets such that $x_i \in e_{J_1}^{\perp} \cap \dots \cap e_{J_s}^{\perp}$. 
Then, we have
$s \leq 2$.
\end{lemma}

\begin{proof}
If $s\geq 3$, then writing
$J_1'=J_1$, $J_2'=J_2$ and $J_3'=\bigcup_{k=3}^s J_k$, we have 
$J=J_1' \sqcup J_2' \sqcup J_3'$ and $x_i \in e_{J_1'}^{\perp} 
\cap e_{J_2'}^{\perp}\cap e_{J_3'}^{\perp}$. Thus, it is enough to prove that the case $s=3$ cannot happen.

So we assume by contradiction that there exists a partition 
$J=J_1 \sqcup J_2 \sqcup J_3$
such that 
$x_i \in e_{J_1}^{\perp} 
\cap e_{J_2}^{\perp}\cap e_{J_3}^{\perp}$. 
As we are assuming that 
$(x+\R \iota_{e_J}\omega) \cap F^{\alpha,\omega}$
is a non-trivial line segment, there exists a tree $T \in \cT_I^\eta$ and an edge $E$ of $T$ such that, denoting by
 $v$ the vertex of $T$ incident to $E$ on the path from $E$ to the
leaves,  $x$ is in the interior of $j_T^{\alpha,\omega}(E)$ and 
the charge $e_v$
as in Defn.\ \ref{Def: charge} is given by $e_v=e_J$.

\begin{figure}
\center{\scalebox{.4}{\input{T11.pspdftex}}}
\caption{A tree $\tilde{T}$ as in the proof of Lemma \ref{lem_no_triple_intersection}.}
\label{Fig:T1}
\end{figure}

We choose a tree $T_{12} \in \cT_{J_1 \sqcup J_2}$ 
such that, denoting by $v_{12}$ the child of the root of
$T_{12}$, we have
$e_{v_{12}'}=e_{J_1}$ and $e_{v_{12}''}=e_{J_2}$.
We also choose a tree $T_3 \in \cT_{J_3}$.
We construct a new tree $\tilde{T} \in \cT_I^\eta$ from $T$, 
$T_{12}$ and $T_3$
as follows (see Figure \ref{Fig:T1}). First, let 
$\overline{T}$ be the tree obtained by removing from $T$
all the edges and vertices descendant from $v$, so that $v$ becomes a leaf of $\overline{T}$. Then, we obtain
$\tilde{T}$ by gluing the three trees $\overline{T}$, $T_{12}$, and $T_3$: we identify the leaf $v$ of $\overline{T}$ with the roots of
$T_{12}$ and $T_3$. 
We still denote by $v$ the vertex of $\tilde{T}$ where 
$\overline{T}$, $T_{12}$ and $T_3$ are glued together, and by
$E$ the edge of 
$\tilde{T}$ incident to $v$ on the path from $v$ to the root.
We have $e_v= e_J$, and we label 
$v'$ and $v''$ the children of $v$ so that
$e_{v'}=e_{J_1}+e_{J_2}$, $e_{v''}=e_{J_3}$, and 
$(v')'$ and $(v')''$ the children of $v'$ so that
$e_{(v')'}=e_{J_1}$ and $e_{(v')''}=e_{J_2}$.

By \eqref{eq:j_T_vertex}, we have $j_{\tilde{T}}(v)=\theta_{\tilde{T},v}^{\alpha,\omega}$ and it follows from Lemma \ref{lem_attractor} that $j_{\tilde{T}}(v) \in 
(e_{J_1}+e_{J_2})^{\perp}\cap e_{J_3}^{\perp}$. 
As we also have
$j_{\tilde{T}}(E) \subset x+\R \iota_{e_J} \omega$, we deduce that 
$j_{\tilde{T}}(v)$ is the intersection point of the line $x+\R \iota_{e_J}$ with 
$(e_{J_1}+e_{J_2})^{\perp} \cap e_{J_3}^{\perp}$
and so
$j_{\tilde{T}}(v)=x_i$. As we are assuming $x_i \in e_{J_1}^{\perp} 
\cap e_{J_2}^{\perp} \cap e_{J_3}^{\perp}$, we have in particular 
$\theta^{\alpha,\omega}_{\tilde{T},v}(e_{J_1})=0$, so \ 
$\theta^{\alpha,\omega}_{\tilde{T},v}(e_{(v')'})=0$,
in contradiction with our assumption that 
$\omega \in U_{I,\alpha}$ and Defn.\ \ref{Def: alpha generic delta} of
$U_{I,\alpha}$.
\end{proof}

For every $i\in \{1,\dots,k\}$, we pick a relevant joint $\fj_i \in \cJ$ containing the point $x_i$.
By consistency of $\phi$ around the joint 
$\fj_i$, the difference 
$\phi_{i-1,i}-\phi_{i,i+1}$ can be
written in terms of the walls containing 
$\fj_i$ as a sum of iterated Lie brackets.
By Lemma \ref{lem_no_triple_intersection},
$\phi_{i-1,i}-\phi_{i,i+1}$ only receives contributions from two-terms decompositions $e_J = e_{J_1}+e_{J_2}$. Denote by $P_{\fj_i}$ the set of
$\{J_1,J_2\}$ with $J_1, J_2\subset J$, 
$J=J_1\sqcup J_2$, $\fj_i \subset e_{J_1}^{\perp} \cap e_{J_2}^{\perp}$,
and $\eta(e_{J_1},e_{J_2}) \neq 0$. 
Then, we have 
\begin{equation}
    \label{Eq: difference}
    \phi_{i-1,i}-\phi_{i,i+1}
=\sum_{\{J_1,J_2\} \in P_{\fj_i}} g^{\fj_i}_{J_1,J_2}
\end{equation}
where $g_{J_1,J_2}^{\fj_i}$ is a scalar multiple of a Lie bracket produced 
by the walls contained in the hyperplanes $e_{J_1}^{\perp}$
and $e_{J_2}^{\perp}$ and intersecting 
along the joint $\fj_i$.
It follows from Lemma \ref{lem_no_triple_intersection} that
one can compute each term $g_{J,J'}^{\fj_i}$
as if the only walls intersecting along
the joint $\fj_i$ were contained in the hyperplanes 
$e_{J_1}^{\perp}$, 
$e_{J_2}^\perp$ and $e_J^\perp$.
The precise form of 
$g_{J_1,J_2}^{\fj_i}$ is given in Lemma \ref{lem_g} below.

\begin{proposition} \label{prop_reduction_initial}
For $\omega \in U^\eta$, we have $\phi_{k,\infty}=0$.
\end{proposition}

\begin{proof}
As the set of walls $\Wall_{\Supp(\fh)}$ is finite, there exists a wall $\sigma_\infty \in \Wall_{\Supp(\fh)}$
such that $n_{\sigma_\infty}=e_J$ and 
$x+t \iota_{e_J} \omega \subset \sigma_\infty$ for 
$t$ large enough, as illustrated in Figures \ref{Fig: walls2d}.
As $\sigma_\infty$ is a cone in 
$\cM_\R$, this last condition is only possible if $\iota_{e_J} \omega \in \sigma_\infty$. 
As $\Supp(\fh) \subset \cN_e^+$, it follows from the assumption $\omega \in U^\eta$ and from the Defn.\ \ref{def_U_eta} of $U^\eta$ that $\iota_{e_J} \eta \in \sigma_\infty$:
indeed the condition that $\omega(e_J,n)$ has the same sign as $\eta(e_J,n)$ for all 
$n \in \cN_e^+$ exactly means that there are no hyperplane $n^{\perp}$ with 
$n \in \cN_e^+$ and separating the points 
$\iota_{e_J}\omega$ and $\iota_{e_J}\omega$.
Therefore, we have by Proposition 
\ref{prop_initial_scattering} that 
$\phi(\sigma_\infty)_{e_J}=I_{\phi,e_J}$.
But we are assuming that 
$I_{\phi,n}=0$ if $n \notin \{e_i\}_{i\in I}$
and $|J|>1$, so $I_{\phi,e_J}=0$.
We conclude that $\phi_{k,\infty}
=\phi(\sigma_\infty)_{e_J}=0$.
\end{proof}

\textbf{Step III:}
In this step, we apply the consistency condition for $\phi$ around the joint
$\fj_i$ through the point $x_i=x+t_i \iota_{e_J}\omega$ to compute the quantities
$g_{J_1,J_2}^{\fj_i}$ appearing in \eqref{Eq: difference}.

 We denote by 
$\sigma_{i-1,i}$ (resp.\ 
$\sigma_{i,i+1}$) the wall containing 
$\fj_i$ such that $n_{\sigma_{i-1,i}}=e_J$
and $\sigma_{i-1,i} \subset \fj_i -\R_{\geq 0} \iota_{e_J}\omega$
(resp.\ $\sigma_{i,i+1} \subset \fj_i+\R_{\geq 0} \iota_{e_J} \omega$),
as illustrated in Figures \ref{Fig: walls2d} and \ref{Fig: walls2d3d}.
We have $\phi(\sigma_{i-1,i})=\phi_{i-1,i}$ and 
$\phi(\sigma_{i,i+1})=\phi_{i,i+1}$.

Let $\{J_1,J_2\} \in P_{\fj_i}$. We denote by 
$\fd_1^{in}$, $\fd_2^{in}$, $\fd_1^{out}$ and $\fd_2^{out}$ 
the walls containing $\fj_i$ such that $n_{\fd_1^{in}}
=n_{\fd_1^{out}}=e_{J_1}$,
$n_{\fd_2^{in}}=n_{\fd_2^{out}}=e_{J_2}$,
\begin{equation} \fd_1^{in} \subset \fj_i + \R_{\geq 0} \iota_{e_{J_1}}\omega\,,\,\, 
\fd_2^{in} \subset \fj_i + \R_{\geq 0} \iota_{e_{J_2}}\omega
\end{equation}
\begin{equation}
\fd_1^{out} \subset 
\fj_i - \R_{\geq 0} \iota_{e_{J_1}}\omega\,, \,\,\fd_2^{out} \subset 
\fj_i - \R_{\geq 0} \iota_{e_{J_2}}\omega\,.
\end{equation}
By Lemma \ref{lem_no_triple_intersection}, there are no non-trivial decomposition $e_{J_1}=\sum_{j=1}^s n_j$ with
$n_j \in \cN_e^+$ and $\fj_i \subset \cap_{j=1}^s n_j^{\perp}$, and so it
follows from the consistency of 
$\phi$ around $\fj_i$ that $\phi(\fd_1^{out})_{e_{J_1}}=\phi(\fd_1^{in})_{e_{J_1}}$.
Similarly, we have $\phi(\fd_2^{out})_{e_{J_2}}=\phi(\fd_2^{in})_{e_{J_2}}$.

\begin{figure}
\center{\scalebox{.6}{\input{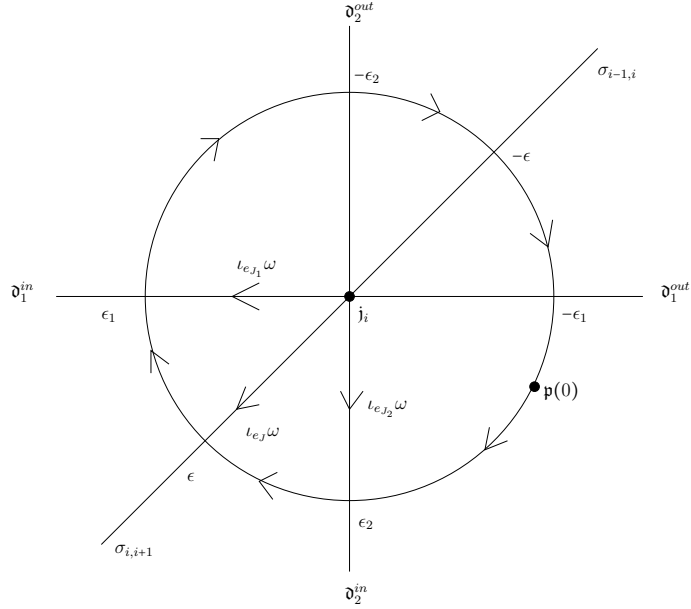}}}
\caption{Consistency around the joint $\fj_i$.}
\label{Fig:Circle}
\end{figure}

\begin{lemma}\label{lem_g}
Let $\fp \colon [0,1] \rightarrow \cM_\R$ be a $\fh$-generic oriented loop around $\fj_i$ intersecting successively 
$\fd_2^{in}$, $\sigma_{i,i+1}$, $\fd_1^{in}$, $\fd_2^{out}$, 
$\sigma_{i-1,i}$, $\fd_1^{out}$ (see Figure \ref{Fig:Circle}). 
Then, we have 
\begin{equation}
    g_{J_1,J_2}^{\fj_i} = - \sgn(\omega(e_{J_1},e_{J_2}))
    [\phi(\fd_1^{in})_{e_{J_1}},\phi(\fd_2^{in})_{e_{J_2}}]\,. 
\end{equation}
\end{lemma}

\begin{proof}
Denote by $\epsilon_1$ (resp.\ $\epsilon_2$ and
$\epsilon$) the sign of the derivative of 
$t \mapsto -\fp(t)(e_{J_1})$
(resp.\ $-\fp(t)(e_{J_2})$ and $-\fp(t)(e_J)$)
at the intersection point of 
$\fp$ with $\fd_1^{in}$ (resp.\ $\fd_2^{in}$ and 
$\sigma_{i,i+1}$). 
According to \eqref{eq:path_automorphism}, we have 
\begin{equation}
    \Psi_{\fp,\phi}=
    e^{-\epsilon_1 \phi(\fd_1^{in})_{e_{J_1}}}
    e^{-\epsilon \phi_{i-1,i}}
    e^{-\epsilon_2 \phi(\fd_2^{in})_{e_{J_2}}}
    e^{\epsilon_1 \phi(\fd_1^{in})_{e_{J_1}}}
    e^{\epsilon \phi_{i,i+1}}
    e^{\epsilon_2 \phi(\fd_2^{in})_{e_{J_2}}}\,.
\end{equation}
Therefore, the consistency of $\phi$ around $\fj_i$
implies 
\begin{equation}
    \epsilon(\phi_{i,i+1}-\phi_{i-1,i}) + \epsilon_1 \epsilon_2 
    [\phi(\fd_1^{in})_{e_{J_1}}, \phi(\fd_2^{in})_{e_{J_2}}]=0
\end{equation}
and so 
\begin{equation}
    g_{J_1,J_2}^{\fj_i} = \epsilon \epsilon_1 \epsilon_2 [\phi(\fd_1^{in})_{e_{J_1}}, \phi(\fd_2^{in})_{e_{J_2}}] \,.
\end{equation}
We show $- \sgn(\omega(e_{J_1},e_{J_2})) = \epsilon \epsilon_1\epsilon_2$ in the remaining part of the proof. 
We work in the plane transverse to $\fj_i$ spanned by 
$\iota_{e_{J_1}}\omega$, $\iota_{e_{J_2}}\omega$, and we view 
$(e_{J_1},e_{J_2})$ as coordinates on this plane. 
Up to smoothly deforming 
$\fp$, one can assume that $\fp$ intersects 
$\fp_2^{in}$
(resp.\ $\sigma_{i,i+1}$ and 
$\fd_1^{in}$) at the point $\fj_i+\iota_{e_{J_2}}\omega$
(resp.\ $\fj_i+\iota_{e_J}\omega$ and 
$\fj_i+\iota_{e_{J_1}}\omega$), which has coordinates 
$(-\omega(e_{J_1},e_{J_2}),0)$
(resp.\ $(-\omega(e_{J_1},e_{J_2}), \omega(e_{J_1},e_{J_2}))$
and $(0, \omega(e_{J_1},e_{J_2}))$).

By definition, $\epsilon_2$ is minus the sign of variation of 
the coordinate $e_{J_2}$ when $\fp$ crosses $\fd_2^{in}$. 
When $\fp$ goes from $\fj_i+\iota_{e_{J_2}}\omega$ to 
$\fj_i+\iota_{e_J}\omega$, the variation of the coordinate 
$e_{J_2}$ is $\omega(e_{J_1},e_{J_2})$, and so 
$\epsilon_2=-\sgn(\omega(e_{J_1},e_{J_2}))$.
Similarly, one checks that 
$\epsilon = -\sgn(\omega(e_{J_1},e_{J_2}))$
and $\epsilon_1= -\sgn(\omega(e_{J_1},e_{J_2}))$.
\end{proof}

\begin{lemma}
\label{Lem: rewriting commutators}
We can apply the induction hypothesis to $J_1, \fd_1^{in}$, $x_i$ and 
$J_2$, $\fd_2^{in}$, $x_i$. Hence, 
\begin{align}
\label{Eq: Ind 1}
    \phi(\fd_1^{in}) & =A_{J_1}^{x_i,\omega}((I_{\phi,e_j})_{j\in J_1}) \,,\\
    \label{Eq: Ind 2}
 \phi(\fd_2^{in}) & =A_{J_2}^{x_i,\omega}((I_{\phi,e_j})_{j\in J_2})  \,.
\end{align}
\end{lemma}
\begin{proof}
To show we can apply the induction hypothesis to $J_1, \fd_1^{in}$, $x_i$ and 
$J_2$, $\fd_2^{in}$, $x_i$, we need to show that:
\begin{itemize}
    \item[(i)] the point $x_i$ 
    is $(J_1,\eta)$-generic and 
$(J_2,\eta)$-generic, 
    \item[(ii)] the intersections
$(x_i + \R \iota_{e_{J_1}}\omega) \cap F^{\alpha,\omega}$ and
$(x_i + \R \iota_{e_{J_1}}\omega) \cap F^{\alpha,\omega}$ 
are non-trivial line segments.
\end{itemize}

To prove (i), first note that $x_i \in \fj_i \subset e_{J_1}^{\perp} \cap e_{J_2}^{\perp}$.
If there were $J_1' \subsetneq J_1$ such that $x_i \in e_{J_1'}^{\perp}$, then, writing 
$J_1=J_1' \sqcup J_1''$, one would have 
$e_J=e_{J_1'}+e_{J_1''}+e_{J_2}$ and $x_i \in e_{J_1'}^\perp \cap e_{J_1''}^\perp \cap e_{J_2}^\perp$, in contradiction with
Lemma \ref{lem_no_triple_intersection}. Therefore, 
$x_i$ is $(J_1,\eta)$-generic. Exchanging the roles of 
$J_1$ and $J_2$, this also proves that 
$x_i$ is $(J_2,\eta)$-generic.

To prove (ii), we follow the same logic as in the proof of Lemma 
\ref{lem_no_triple_intersection}.
As we are assuming that 
$x+\R \iota_{e_J} \omega \subset F$ is a non-trivial line segment, there exists a tree $T \in \cT_I^\eta$ and an edge $E$ of $T$ such that, denoting by
 $v$ the vertex of $T$ incident to $E$ on the path from $E$ to the
leaves, $x$ is in the interior of $j_T^{\alpha,\omega}(E)$
 and $e_v=e_J$.

\begin{figure}
\center{\scalebox{.4}{\input{T2.pspdftex}}}
\caption{A tree $\tilde{T}$ as in the proof of Lemma \ref{Lem: rewriting commutators}.}
\label{Fig:T2}
\end{figure}

We choose trees $T_1 \in \cT_{J_1}$ and $T_2 \in \cT_{J_2}$.
We construct a new tree $\tilde{T} \in \cT_I^\eta$ from 
$T$, $T_1$ and $T_2$
as follows (see Figure \ref{Fig:T2}). First, let 
$\overline{T}$ be the tree obtained by removing from $T$
all the edges and vertices descendant from $v$, so that $v$ becomes a leaf of $\overline{T}$. Then, we obtain
$\tilde{T}$ by gluing the three trees $\overline{T}$, $T_1$, and $T_2$: we identify the leaf $v$ of $\overline{T}$ with the roots of
$T_1$ and $T_2$. 
We still denote by $v$ the vertex of $\tilde{T}$ where 
$\overline{T}$, $T_1$ and $T_2$ are glued together and by $E$ the edge of 
$\tilde{T}$ incident to $v$ on the path from $v$ to the root.
We have $e_v=e_{J}$ and we label $v'$ and $v''$ the children of $v$ so that
$e_{v'}=e_{J_1}$ and $e_{v''}=e_{J_2}$. Let $E'$ (resp.\ $E''$) be the edge
of $\tilde{T}$ connecting $v$ to $v'$ (resp.\ $v''$).
We have $j_{\tilde{T}}(e_v)=\theta_{\tilde{T},v}^{\alpha,\omega}$, and so 
by Lemma \ref{lem_attractor}, $j_{\tilde{T}}(e_v)
\in e_{J_1}^{\perp}\cap e_{J_2}^{\perp}$. As we also have
$j_{\tilde{T}}(E) \subset x+\R \iota_{e_J} \omega$, we deduce that 
$j_{\tilde{T}}(v)=x_i$. We conclude that $j_{\tilde{T}}(E') \subset 
(x_i+\R \iota_{e_{J_1}}\omega)\cap F^{\alpha,\omega}$ and  $j_{\tilde{T}}(E'') \subset 
(x_i+\R \iota_{e_{J_2}}\omega)\cap F^{\alpha,\omega}$. 
By Proposition \ref{prop_not_contracted}, 
$j_{\tilde{T}}(E')$ and $j_{\tilde{T}}(E'')$ are non-trivial line segments and and hence the proof of (ii) follows.
\end{proof}

Thus, we can rewrite Lemma \ref{lem_g} as 
\begin{equation}
    g_{J_1,J_2}^{\fj_i}=-\sgn(\omega(e_{J_1},e_{J_2}))
    [A_{J_1}^{x_i,\omega}, A_{J_2}^{x_i,\omega}]((I_{\phi,e_j})_{j\in J})\,.
\end{equation}
By Defn.\ \ref{def_final_A} of the flow tree maps as sum over trees, 
this can be rewritten as
\begin{equation} \label{eq_1}
    g_{J_1,J_2}^{\fj_i}=- \sum_{T_1 \in \cT_{J_1}^{\eta}} \sum_{T_2 \in \cT_{J_2}^\eta}
    \sgn(\omega(e_{J_1},e_{J_2}))
    [A_{J_1,T_1}^{x_i,\omega}, A_{J_2,T_2}^{x_i,\omega}]((I_{\phi,e_j})_{j\in J})\,.
\end{equation}

\textbf{Step IV:} As a final step, we show that 
\begin{equation}
    \label{Eq: final step}
    \phi(\sigma)_{e_J}=\phi_{k,\infty}+A_J^{x,\omega}((I_{\phi,e_j})_{j \in J})\,.
\end{equation}

To prove \eqref{Eq: final step}, first observe that summing the equations \eqref{Eq: difference} side by side for all values $i\in \{ 1, \ldots,k \}$ we obtain $\phi(\sigma)_{e_J}=\phi_{k,\infty} 
+ \sum_{i=1}^k \sum_{\{J_1,J_2\} \in P_{\fj_i}}g_{J_1,J_2}^{\fj_i}$.
Then, using \eqref{eq_1}, we get
\begin{equation} \label{eq_2}
    \phi(\sigma)_{e_J}=\phi_{k,\infty} 
- \sum_{i=1}^k \sum_{\{J_1,J_2\} \in P_{\fj_i}}
 \sum_{T_1 \in \cT_{J_1}^{\eta}} \sum_{T_2 \in \cT_{J_2}^\eta}
    \sgn(\omega(e_{J_1},e_{J_2})) 
    [A_{J_1,T_1}^{x_i,\omega}, A_{J_2,T_2}^{x_i,\omega}]((I_{\phi,e_j})_{j\in J})\,.
\end{equation}
On the other hand, we have
$A_J^{x,\omega}=\sum_{T \in \cT_J^\eta} A_{J,T}^{x,\omega}$
by Defn.\ \ref{def_final_A}, and so, using
Defn.\ \ref{def_flow_tree_map}: 
\begin{equation} \label{eq_2_2}
\sum_{T\in \cT_J^\eta} A_{J,T}^{x, \omega}
    =-\sum_{T \in \cT_J^{\eta}}\frac{\sgn 
   (x(e_{v_T'}))
   + \sgn ( \omega(e_{v_T'}, e_{v_T''}))}{2} \, [A_{J,T,v_T'}^{x, \omega},
   A_{J,T,v_T''}^{x, \omega}] \,,
\end{equation}
where $v_T$ is the child of the root of the tree $T$, and 
$v_T', v_T''$ are the children of $v_T$.

Comparing \eqref{eq_2} and \eqref{eq_2_2}, it remains to show that 
\begin{align} \label{eq_3}
&\sum_{i=1}^k \sum_{\{J_1,J_2\} \in P_{\fj_i}}
 \sum_{T_1 \in \cT_{J_1}^{\eta}} \sum_{T_2 \in \cT_{J_2}^\eta}
    \sgn(\omega(e_{J_1},e_{J_2})) 
    [A_{J_1,T_1}^{x_i,\omega}, A_{J_2,T_2}^{x_i,\omega}] \\
&=  \sum_{T \in \cT_J^{\eta}}\frac{\sgn 
   (x(e_{v_T'}))
   + \sgn ( \omega(e_{v_T'}, e_{v_T''}))}{2} \, [A_{J,T,v_T'}^{x, \omega},
   A_{J,T,v_T''}^{x, \omega}]\,. \label{eq_4}  
\end{align}

Given $T \in \cT_J^\eta$ and writing $J_1=J_{T,v_T'}$ and 
$J_2=J_{T,v_T''}$, we obtain a tree $T_1 \in \cT_{J_1}$
(resp.\ $T_2 \in \cT_{J_2}$)
by considering the subtree of $T$ made of $v_T$
and its descendant through the child $v_{T}'$ (resp.\ 
$v_T''$) (see Figure \ref{Fig:T3}). If the contribution of 
$T$ in \eqref{eq_4} is nonzero, we have in fact 
$T_1 \in \cT_{J_1}^\eta$ and $T_2 \in \cT_{J_2}^\eta$. 
We claim that $x(e_{J_1})$ and $\omega(e_{J_1},e_{J_2})$
are of the same sign if and only if
the intersection point of the line 
$x+\R \iota_{e_J} \omega$ with $e_{J_1}^{\perp} \cap e_{J_2}^{\perp}$
is contained in the half-line $x+\R_{\geq 0} \iota_{e_J} \omega$.
Indeed, the intersection point of the line $x+\R \iota_{e_J} \omega$ with $e_{J_1}^{\perp} \cap e_{J_2}^{\perp}$
is the point 
\begin{equation}
x-\frac{x(e_{J_1})}{\omega(e_J,e_{J_1})} \iota_{e_J} \omega\,.
\end{equation}
Thus, if $\sgn (x(e_{J_1}))+\sgn(\omega(e_{J_1},e_{J_2})) \neq 0$, 
the intersection point of the line 
$x+\R \iota_{e_J} \omega$ with $e_{J_1}^{\perp} \cap e_{J_2}^{\perp}$
is equal 
$x_i$ for some $1 \leq i\leq k$ such that $\{J_1,J_2\} \in \fj_i$, and we have \begin{equation} x_i= x-\frac{x(e_{J_1})}{\omega(e_J,e_{J_1})}
\iota_{e_J} \omega = \theta_{T,v}^{x,\omega}\,. 
\end{equation}
Then, it follows from
Defn.\ \ref{def_flow_tree_map} 
and \ref{def_final_A} that
$A_{J_1,T_1}^{x_i,\omega}=A_{J,T,v_T'}^{x,\omega}$ and 
$A_{J_2,T_2}^{x_i,\omega}=A_{J,T,v_T''}^{x,\omega}$.
Conversely, for every 
$1 \leq i\leq k$ and $\{J_1,J_2\} \in P_{\fj_i}$, every 
$T_1 \in \cT_{J_1}^\eta$ and $T_2 \in \cT_{J_2}^\eta$
are obtained in this way. Hence, \eqref{Eq: final step} follows.

\begin{figure}
\center{\scalebox{.8}{\input{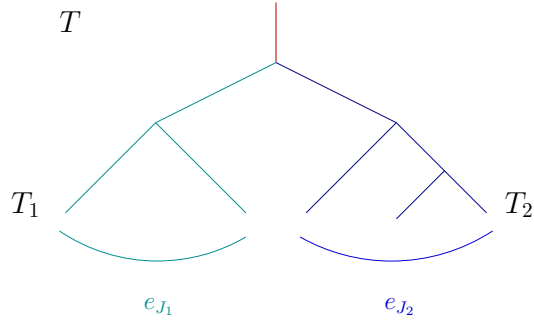}}}
\caption{Trees $T_1$, $T_2$ and $T$.}
\label{Fig:T3}
\end{figure}

From \eqref{Eq: final step} together with Proposition \ref{prop_reduction_initial}, we obtain $\phi(\sigma)_{e_J}
=A_J^{x,\omega}((I_{\phi,e_j})_{j\in J})$ and 
so Theorem \ref{thm_flow_tropical} holds for 
$J$, $\sigma$ and $x$. Hence, this concludes our
proof of Theorem \ref{thm_flow_tropical}.
\end{proof}

\subsection{The flow tree formula for scattering diagrams}
\label{section_proof_thm_2}

\begin{definition} \label{def:gamma_generic}
A point $\tau \in \gamma^{\perp} \subset M_\R$ is $\gamma$-generic if for every 
$\gamma' \in N$, $\theta(\gamma')=0$ implies that $\gamma'$ is collinear with 
$\gamma$.
\end{definition}

\begin{lemma}\label{lem_generic}
Let $\tau \in \gamma^{\perp} \subset M_\R$ be a $\gamma$-generic point as in 
Defn.\ \ref{def:gamma_generic}.
Then, the image $\alpha \coloneqq q(\tau) \in e_I^{\perp} \subset \cM_\R$ 
of $\tau$ by the map $q \colon M_\R \rightarrow \cM_\R$ given by \eqref{Eq: q}
is $(I,\eta)$-generic as in Defn.\ \ref{def_J_generic}.
\end{lemma}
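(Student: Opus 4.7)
The plan is to verify both conditions of Defn.\ \ref{def_J_generic} for $(I,\eta)$-genericity of $\alpha$ by a direct unwinding of definitions. For the first condition that $\alpha \in e_I^{\perp}$, I will observe that $p(e_I)=\sum_{i=1}^r \gamma_i = \gamma$ by construction of $p$, so that $\alpha(e_I) = \tau(p(e_I)) = \tau(\gamma)$, which vanishes because $\tau \in \gamma^{\perp}$.

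For the non-vanishing condition, my plan is to use the equivalent reformulation stated immediately after Defn.\ \ref{def_J_generic}: a point $\alpha \in e_I^{\perp}$ is $(I,\eta)$-generic iff $\alpha \notin e_{J'}^{\perp}$ for every strict subset $J' \subsetneq I$ with $\eta(e_I,e_{J'}) \neq 0$. I will argue this by contrapositive. Fixing a strict subset $J' \subsetneq I$ with $\alpha(e_{J'})=0$, I set $\gamma' \coloneqq p(e_{J'}) = \sum_{i \in J'} \gamma_i \in N$, so that $\tau(\gamma') = \alpha(e_{J'}) = 0$. The $\gamma$-genericity hypothesis on $\tau$ then forces $\gamma'$ to be collinear with $\gamma$, and hence
\begin{equation*}
\eta(e_I, e_{J'}) = \langle p(e_I), p(e_{J'}) \rangle = \langle \gamma, \gamma' \rangle = 0
\end{equation*}
by skew-symmetry of $\langle -,- \rangle$. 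Thus no strict subset $J' \subsetneq I$ with $\eta(e_I,e_{J'}) \neq 0$ satisfies $\alpha(e_{J'})=0$, which is exactly the required genericity.

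No serious obstacle is expected: the lemma amounts to a one-step transposition from the lattice $N$ (where $\gamma$-genericity lives) to the auxiliary lattice $\cN$ (where $(I,\eta)$-genericity lives) via the dual pair $(p,q)$, with the antisymmetry of $\langle -,-\rangle$ being the only nontrivial ingredient, used in the form $\langle \gamma, \gamma \rangle = 0$.
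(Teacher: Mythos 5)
Your proof is correct and follows essentially the same argument as the paper: the key step in both is that $\alpha(e_{J'})=0$ forces $\tau(p(e_{J'}))=0$, hence $p(e_{J'})$ collinear with $\gamma$ by $\gamma$-genericity, hence $\eta(e_I,e_{J'})=\langle \gamma, p(e_{J'})\rangle=0$ by skew-symmetry. The only cosmetic difference is that you invoke the subset reformulation stated right after Defn.\ \ref{def_J_generic} while the paper phrases the contradiction directly in terms of a tree $T \in \cT_I^\eta$ and the children of the root's child, but these are the same argument.
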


\begin{proof}
Assume by contradiction that $\alpha$ is not $(I,\eta)$-generic, which means
by Defn.\ \ref{def_J_generic}
that there exists a tree
$T \in \cT_I^{\eta}$ such that $\alpha(e_{v'}) = 0$, 
where $v$ is the child of the root of $T$. Thus, we have
$\tau(p(e_{v'}))=0$, that is, \ $\tau \in p(e_{v'})^{\perp}$, and so the condition that 
$\tau$ is $\gamma$-generic implies by Defn.\ \ref{def:gamma_generic}
that 
$p(e_{v'})$ is collinear with $\gamma=p(e_I)$. Recalling that 
$e_v=e_I$, this implies that
$\eta(e_{v'},e_v)=\eta(e_{v'},e_I)=
\langle p(e_{v'}), p(e_I) \rangle =0$, in contradiction with the assumption that $T \in \cT_I^\eta$ and the Defn.\ \ref{def_tree_eta} of $\cT_I^\eta$.
\end{proof}

Let $\tau \in \gamma^\perp$ be a $\gamma$-generic point as 
in Defn.\ \ref{def:gamma_generic}.
By Lemma \ref{lem_generic}, the point $\alpha \coloneqq q(\tau) \in e_I^{\perp}$
is $(I,\eta)$-generic. 
Therefore, by Proposition \ref{Prop: def_V} 
the set $U_{I,\alpha}$ of $(I,\alpha)$-generic skew-symmetric bilinear form is open and dense in $\bigwedge^2 \cM_\R$, and 
for every $\omega \in U_{I,\alpha}$ the flow tree map
    $A^{\alpha, \omega}_I \colon \prod_{i\in I} \fh_{e_i}
    \rightarrow \fh_e$
is defined by Defn.\ \ref{def_final_A}.

Finally, we arrive at our main theorem of this section, the flow tree formula for scattering diagrams I:
\begin{theorem} \label{thm_flow_tree_formula_scattering}
Let $\fd \in \Wall_{\Supp(\fg)}$ be a wall in $M_\R$ and 
$\Gamma=\{\gamma_i\}_{i\in I}\in \mult(N^+)$ a multiset of elements of $N^+$
such that $\fd \subset \gamma^{\perp}$, where $\gamma=\sum_{i \in I} \gamma_i$.
Let $\tau \in \fd$ be a $\gamma$-generic point and 
$\alpha \coloneqq q(\tau)\in \cM_\R$ the image of $\tau$ by the map 
$q: M_{\RR} \to \cM_{\RR}$ as in \eqref{Eq: q}.
For every small enough generic perturbation $\omega \in U_{I,\alpha} \cap U^{\eta}$ of $\eta$ as in \ref{Small generic perturbations of the skew-symmetric bilinear form}, 
the map
$F_\Gamma^{\fg,\fd}$ in \eqref{Eq: FgDeltaGamma} is given by the ``flow tree formula for scattering diagrams I'':
\begin{equation}
    F_\Gamma^{\fg,\fd} = \frac{1}{\prod_{n \in N^+}m_\Gamma(n)!}\,\,\hat{A}^{\alpha,\omega}_I\,.
\end{equation}
where $\hat{A}^{\alpha,\omega}_I$ is as in Defn.\ \ref{def_specialization} the specialization of the flow tree map
$A^{\alpha,\omega}_I$ defined in Defn.\ \ref{def_final_A}.
\end{theorem}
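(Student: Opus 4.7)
The plan is to combine the comparison theorem (Theorem \ref{thm_D_D_gamma}) with the technical heart of the paper (Theorem \ref{thm_flow_tropical}) to identify the scattering coefficient $F_\Gamma^{\fg,\fd}$ with the flow tree map.

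By Theorem \ref{thm_D_D_gamma} applied to the multiset $\Gamma$ and the wall $\fd$, it suffices to prove
\begin{equation}
F_{\Gamma_e}^{\fh,\fe}= A_I^{\alpha,\omega}
\end{equation}
as maps $\prod_{i \in I}\fh_{e_i}\to \fh_{e_I}$, where $\fe \in \Wall_{\Supp(\fh)}$ is a wall in $\cM_\R$ with normal vector $e_I$ containing $q(\fd)$, hence in particular containing $\alpha=q(\tau)$. Thus the first reduction I would carry out is to fix such an $\fe$ and focus on establishing this equality of maps between free graded pieces of $\fh$.

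To reach the identity above, I plan to apply Theorem \ref{thm_flow_tropical} to the pair $(J,x,\sigma)\coloneqq (I,\alpha,\fe)$ with the fixed choice of $\omega\in U_{I,\alpha}\cap U^\eta$. I must check the hypotheses: $\alpha\in e_I^\perp$ holds because $\tau\in \fd\subset \gamma^\perp$ and $p(e_I)=\gamma$; the $(I,\eta)$-genericity of $\alpha$ follows from Lemma \ref{lem_generic}; finally $\alpha\in F^{\alpha,\omega}$ is immediate since $\alpha$ is the common image of the roots of all $j_T^{\alpha,\omega}$, and the segment $(\alpha+\R\iota_{e_I}\omega)\cap F^{\alpha,\omega}$ is non-trivial because, for any $T\in \cT_I^\eta$ with child $v$ of the root, the discrete attractor flow gives
\begin{equation}
\theta_{T,v}^{\alpha,\omega}=\alpha-\frac{\alpha(e_{v'})}{\omega(e_I,e_{v'})}\iota_{e_I}\omega,
\end{equation}
so the edge from the root to $v$ in $j_T^{\alpha,\omega}(T^\circ)$ is a non-degenerate segment in the line $\alpha+\R\iota_{e_I}\omega$ (non-degenerate by Proposition \ref{prop_not_contracted}).

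Having verified the hypotheses, Theorem \ref{thm_flow_tropical} yields, for every consistent $(\cN^+,\fh)$-scattering diagram $\phi$ with $I_{\phi,n}=0$ unless $n\in\{e_i\}_{i\in I}$,
\begin{equation}
\phi(\fe)_{e_I}=A_I^{\alpha,\omega}\bigl((I_{\phi,e_i})_{i\in I}\bigr).
\end{equation}
On the other hand, Proposition \ref{prop_scattering_from_initial_explicit} applied to $\phi$ and the wall $\fe$ expresses $\phi(\fe)_{e_I}$ as a sum over multisets $\Gamma'\in\mult(\cN^+)$ of elements summing to $e_I$; under the support restriction $I_{\phi,n}=0$ unless $n\in\{e_i\}_{i\in I}$, the only surviving multiset is $\Gamma_e=\{e_i\}_{i\in I}$, because any expression $\sum_{i\in I}a_i e_i=e_I$ with $a_i\in \Z_{\ge 0}$ forces $a_i=1$. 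Hence $\phi(\fe)_{e_I}=F_{\Gamma_e}^{\fh,\fe}\bigl((I_{\phi,e_i})_{i\in I}\bigr)$.

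Since by Proposition \ref{prop_scattering_from_initial} any choice of initial data $(I_{\phi,e_i})_{i\in I}\in \prod_{i\in I}\fh_{e_i}$ arises from some such $\phi$, the two multilinear maps $F_{\Gamma_e}^{\fh,\fe}$ and $A_I^{\alpha,\omega}$ agree on every input, so they coincide. Combined with Theorem \ref{thm_D_D_gamma}, this yields the flow tree formula. The main technical obstacle is already encapsulated in Theorem \ref{thm_flow_tropical}; the present step is essentially the bookkeeping that matches its output against the universal coefficient $F_\Gamma^{\fg,\fd}$, and the only subtlety is confirming the hypothesis about the line $\alpha+\R\iota_{e_I}\omega$ meeting the forest in more than a point, which follows from the structure of the discrete attractor flow at the root.
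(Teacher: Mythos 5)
Your proposal is correct and takes essentially the same route as the paper: reduce via Theorem \ref{thm_D_D_gamma} to comparing $F_{\Gamma_e}^{\fh,\fe}$ with $A_I^{\alpha,\omega}$, then invoke Theorem \ref{thm_flow_tropical} with $J=I$, $x=\alpha$, $\sigma=\fe$. You supply somewhat more detail than the paper does --- explicitly verifying the ``non-trivial segment'' hypothesis via the root edge of the embedded trees, and unpacking how Proposition \ref{prop_scattering_from_initial_explicit} together with the surjectivity of Proposition \ref{prop_scattering_from_initial} upgrades the pointwise conclusion of Theorem \ref{thm_flow_tropical} to an equality of multilinear maps --- but these are precisely the steps the paper leaves implicit, so the argument is the same.
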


\begin{proof}
Let $\fe \in \Wall_{\Supp(\fh)}$ be a wall in $\cM_\R$ containing $q(\fd)$ such that $\fe \subset e_I^{\perp}$. In particular, we have $\alpha \in \fe$.
By Theorem \ref{thm_D_D_gamma}, we have 
\begin{equation} \label{eq_0_1}
F_{\Gamma}^{\fg,\fd}
=\frac{1}{\prod_{n \in N^+}m_\Gamma(n)!}
\hat{F}_{\Gamma_e}^{\fh, \fe}\,.
\end{equation}
On the other hand, as $\alpha$ is $(I,\eta)$-generic by Lemma \ref{lem_generic}, we can apply Theorem \ref{thm_flow_tropical} for $J=I$, $\sigma=\fe$, $x=\alpha$, and we obtain 
\begin{equation} \label{eq_0_2}
    F_{\Gamma_e}^{\fh, \fe}=A_I^{\alpha,\omega}\,.
\end{equation}
The result follows from \eqref{eq_0_1} and \eqref{eq_0_2}.
\end{proof}

We provide also a variant of the flow tree formula for scattering diagrams, the flow tree formula for scattering diagrams II, which involves perturbing the points in $\mathcal{M}_{\RR}$ rather than the skew-symmetric bilinear form, as in Theorem \ref{thm_flow_tree_formula_scattering}.

Note that from Proposition \ref{prop_generic_point}
that the set $V_{I,\eta}$ of $\beta \in e_I^{\perp} \subset \cM_\R$
such that $\beta$ is $(I,\eta)$-generic and $\eta$ is $\beta$-generic is open and dense in $e_I^{\perp}$.
For every $\beta \in V_{I,\eta}$, we define the flow tree maps 
$A_I^{\beta,\eta} \colon \prod_{i\in I} \fh_{e_i} \rightarrow \fh_e$
as in Defn.\ \ref{def_final_A}
and its specialization $\hat{A}_I^{\beta,\eta} \colon \prod_{n \in\overline{\Gamma}} \fg_n \rightarrow \fg_\gamma$ as in Defn.\ \ref{def_specialization}. 
For every $\beta \in V_{I,\eta}$, we define 
$F^{\beta,\eta}$ as $F^{\alpha,\omega}$
in \ref{eq:the_forest} and replacing $\alpha$
with $\beta$, and $\omega$ with $\eta$.
We also define $V^\alpha \subset e_I^{\perp}$ as the set of 
$\beta \in e_I^{\perp}$ such that there exists a wall 
$\fe \in \Wall_{\Supp(\fh)}$ with $\fe \subset e_I^{\perp}$
which contains both $\alpha$ and $\beta$. We have $\alpha \in V^\alpha$
and $V^{\alpha}$ is an open neighborhood of $\alpha$ in 
$e_I^{\perp}$. We say that $\beta$ is a \emph{small enough generic perturbation} of 
$\alpha$ in $e_I^{\perp}$ if 
\begin{equation}
\beta \in V_{I,\alpha} \cap V^{\alpha}\,.
\end{equation}

\begin{theorem} \label{thm_flow_tropical_2}
Fix a $(I,\eta)$-generic point 
$\alpha \in e_I^{\perp} \subset \cM_\R$ as in Defn.\ \ref{def_J_generic} and a small enough generic perturbation $\beta \in V_{I,\alpha} \cap V^\alpha$ of $\alpha$ in $e_I^{\perp}$. Let $J \subset I$ be a nonempty index set, and $x \in e_J^{\perp}$ a $(J,\eta)$-generic point such that $x \in F^{\beta, \eta}$ and  the line segment 
$(x+\R \iota_{e_J}\omega) \cap F^{\beta,\eta}$ is not a point. Let 
$\sigma \in \Wall_{\Supp(\fh)}$ be a wall containing $x$ and with normal vector
$n_\sigma=e_J$. 
Then for every consistent $(\cN^+,\fh)$-scattering diagram 
$\phi$ such that $I_{\phi,n}=0$ if $n \notin \{e_i\}_{i\in I}$, we have 
\begin{equation} \label{eq_thm_2}
    \phi(\sigma)_{e_J}
    = A_J^{x,\eta}\left( (I_{\phi,e_i})_{i \in J} \right)
\end{equation}
\end{theorem}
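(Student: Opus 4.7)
The plan is to adapt the inductive four-step argument used in the proof of Theorem~\ref{thm_flow_tropical} to the new perturbation scheme, in which the skew-symmetric form $\eta$ is kept fixed and only the initial point is perturbed from $\alpha$ to $\beta$. Proceed by induction on $|J|$. The base case $|J|=1$ is identical to the one in Theorem~\ref{thm_flow_tropical}: the only tree in $\cT_{\{i\}}$ gives the identity map, and $\phi(\sigma)_{e_i}=I_{\phi,e_i}$ by the algorithmic construction of a scattering diagram from its initial data (Proposition~\ref{prop_scattering_from_initial_explicit}), since $e_i$ has no non-trivial decomposition in $\Supp(\fh)\subset \cN_e^+$.

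For the induction step, I would first define the set $\cJ$ of relevant joints exactly as in Step~I of Theorem~\ref{thm_flow_tropical}, and let $0<t_1<\dots<t_k$ parametrize the crossings $x_i=x+t_i\,\iota_{e_J}\eta$ of the half-line $x+\R_{\geq 0}\iota_{e_J}\eta$ with relevant joints. Lemma~\ref{lem_propagation} applies verbatim, so the restriction of $\phi(\cdot)_{e_J}$ is constant between successive crossings; denote these constant values by $\phi_{i-1,i}$ and $\phi_{k,\infty}$. Next I would re-establish the analog of Lemma~\ref{lem_no_triple_intersection}: a triple decomposition $e_J=e_{J_1}+e_{J_2}+e_{J_3}$ with $x_i\in e_{J_1}^\perp\cap e_{J_2}^\perp\cap e_{J_3}^\perp$ would allow one to build a tree $\tilde T\in \cT_I^\eta$ whose interior vertex is forced by the discrete flow to satisfy $\theta^{\beta,\eta}_{\tilde T,v}(e_{(v')'})=0$, contradicting the $(I,\beta)$-genericity of $\eta$ built into the hypothesis on $\beta$ (cf.\ Proposition~\ref{prop_generic_point}).

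The key simplification compared to the proof of Theorem~\ref{thm_flow_tropical} is the vanishing of $\phi_{k,\infty}$: since we now flow along the actual attractor direction $\iota_{e_J}\eta$ rather than a perturbation $\iota_{e_J}\omega$, the point $x+t\,\iota_{e_J}\eta$ lies for large $t$ in a wall $\sigma_\infty$ with $n_{\sigma_\infty}=e_J$ that automatically contains $\iota_{e_J}\eta$, so Proposition~\ref{prop_initial_scattering} and the assumption $I_{\phi,n}=0$ for $n\notin \{e_i\}_{i\in I}$ give $\phi_{k,\infty}=I_{\phi,e_J}=0$ directly, without needing any closeness hypothesis analogous to $\omega\in U^\eta$. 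The role of $V^\alpha$ is rather to guarantee that $\beta$ lies in the same wall of $\fS_{\Supp(\fh)}$ contained in $e_I^\perp$ as $\alpha$, so that the forest $F^{\beta,\eta}$ has the same combinatorial structure near the root as $F^{\alpha,\eta}$ would.

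At each relevant joint $\fj_i$ I would repeat Step~III verbatim: no-triple-intersection and consistency of $\phi$ around $\fj_i$ give, via Lemma~\ref{lem_g} (with $\omega$ replaced by $\eta$),
\[
\phi_{i-1,i}-\phi_{i,i+1}=-\sum_{\{J_1,J_2\}\in P_{\fj_i}}\sgn(\eta(e_{J_1},e_{J_2}))\,[\phi(\fd_1^{in})_{e_{J_1}},\phi(\fd_2^{in})_{e_{J_2}}].
\]
The analog of Lemma~\ref{Lem: rewriting commutators} applies: at $x_i$, each of the data $(J_1,\fd_1^{in})$ and $(J_2,\fd_2^{in})$ satisfies the inductive hypothesis, so $\phi(\fd_k^{in})_{e_{J_k}}=A_{J_k}^{x_i,\eta}((I_{\phi,e_j})_{j\in J_k})$. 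Summing telescopically from $\phi_{0,1}=\phi(\sigma)_{e_J}$ to $\phi_{k,\infty}=0$ and matching, as in Step~IV, each pair $(\fj_i,\{J_1,J_2\},T_1,T_2)$ with a tree $T\in \cT_J^\eta$ whose child-of-root vertex is mapped to $x_i$ by the $\eta$-discrete flow from $x$, yields $\phi(\sigma)_{e_J}=A_J^{x,\eta}((I_{\phi,e_i})_{i\in J})$. The main obstacle will be verifying that the bijection between crossings and trees, and the non-degeneracy required to form $\epsilon^{x,\eta}_{T,v}$, remain valid without the form-perturbation $\omega\in U_{I,\alpha}$; this is where the $(I,\beta)$-genericity of $\eta$ encoded in $\beta\in V_{I,\eta}\cap V^\alpha$ is used decisively, playing the combined role of Defn.~\ref{def_J_generic} and Defn.~\ref{Def: alpha generic delta} in the original argument.
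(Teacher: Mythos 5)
Your proposal is correct and follows essentially the same route as the paper: the paper's own proof of this theorem is a one-line remark that the argument is the proof of Theorem \ref{thm_flow_tropical} with $(\alpha,\omega)$ replaced by $(\beta,\eta)$ together with the observed simplification in the $\phi_{k,\infty}=0$ step, and you reproduce exactly this, correctly identifying that the $(I,\beta)$-genericity of $\eta$ built into $\beta\in V_{I,\eta}$ now plays the role of $\omega\in U_{I,\alpha}$ in the analogues of Lemma \ref{lem_no_triple_intersection} and the definition of $\epsilon$-factors, and that flowing along $\iota_{e_J}\eta$ itself makes $\iota_{e_J}\eta\in\sigma_\infty$ automatic. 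Your side remark about the role of $V^\alpha$ is slightly off the mark (it is not needed inside the proof of this theorem, but only in Theorem \ref{thm_flow_tree_formula_scattering_2} to find a wall $\fe\subset e_I^{\perp}$ containing both $q(\fd)$ and $\beta$), but this does not affect the validity of your argument.
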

\begin{proof}
The proof is analogous to the proof of 
Theorem \ref{thm_flow_tropical}, with $\alpha$, $\omega$ replaced respectively by 
$\beta$, $\eta$, and with an extra simplification in
Proposition \ref{prop_reduction_initial}: 
for $t$ positive large enough, $x+t \iota_{e_J}\eta$ is contained in a wall 
$\sigma_\infty$, which thus necessarily contains 
$\iota_{e_J}\eta$ and so $\phi_{k,\infty}=0$ follows from Proposition 
\ref{prop_initial_scattering}.
\end{proof}

\begin{theorem} \label{thm_flow_tree_formula_scattering_2}
Let $\fd \in \Wall_{\Supp(\fg)}$ be a wall in $M_\R$ and 
$\Gamma=\{\gamma_i\}_{i\in I}\in \mult(N^+)$ a multiset of elements of $N^+$
such that $\fd \subset \gamma^{\perp}$, where $\gamma=\sum_{i \in I} \gamma_i$.
Let $\tau \in \fd$ be a $\gamma$-generic point and 
$\alpha \coloneqq q(\tau)\in \cM_\R$ the image of $\tau$ by the map 
$q: M_{\RR} \to \cM_{\RR}$ as in \eqref{Eq: q}.
For every small enough generic perturbation 
$\beta \in V_{I,\alpha} \cap V^{\alpha}$ of $\alpha$ in 
$e_I^{\perp}$, the universal map 
$F_\Gamma^{\fg,\fd}$ in \eqref{Eq: FgDeltaGamma} is given by by the ``flow tree formula for scattering diagrams II'':
\begin{equation}
    F_\Gamma^{\fg,\fd} = \frac{1}{\prod_{n\in N^+} m_\Gamma(n)!} 
    \hat{A}_I^{\beta,\eta} \,.
\end{equation}
\end{theorem}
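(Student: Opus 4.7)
The plan is to mimic verbatim the proof of Theorem~\ref{thm_flow_tree_formula_scattering}, replacing the use of Theorem~\ref{thm_flow_tropical} by its variant Theorem~\ref{thm_flow_tropical_2}. Two ingredients do all the work: the passage from the ambient $(N^+,\fg)$-scattering diagram to the auxiliary $(\cN^+,\fh)$-scattering diagram afforded by Theorem~\ref{thm_D_D_gamma}, and the tree description of walls in the auxiliary diagram provided by Theorem~\ref{thm_flow_tropical_2}.

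Concretely, I would first choose a wall $\fe \in \Wall_{\Supp(\fh)}$ with $\fe \subset e_I^{\perp}$ that contains $q(\fd)$, and, using that $\beta \in V^{\alpha}$, arrange that $\fe$ also contains $\beta$. Theorem~\ref{thm_D_D_gamma} applied to $\Gamma_e = \{e_i\}_{i \in I}$ and the wall $\fe$ then gives
\[
F_\Gamma^{\fg,\fd} \;=\; \frac{1}{\prod_{n \in N^+} m_\Gamma(n)!}\,\hat{F}_{\Gamma_e}^{\fh,\fe}\,.
\]
By Lemma~\ref{lem_generic}, $\alpha = q(\tau)$ is $(I,\eta)$-generic, and by hypothesis $\beta$ is a small enough generic perturbation of $\alpha$ in $e_I^{\perp}$. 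Applying Theorem~\ref{thm_flow_tropical_2} with $J = I$, $\sigma = \fe$ and $x = \beta$ then yields $F_{\Gamma_e}^{\fh,\fe} = A_I^{\beta,\eta}$, and the desired identity follows by specialization.

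The only non-formal checks are the two hypotheses of Theorem~\ref{thm_flow_tropical_2} at $x = \beta$: that $\beta \in F^{\beta,\eta}$ and that $(\beta + \R\,\iota_{e_I}\eta) \cap F^{\beta,\eta}$ is a non-trivial line segment. The former is automatic because $\beta$ is the common image under $j_T^{\beta,\eta}$ of the roots of all trees $T \in \cT_I^\eta$; the latter holds because, by \eqref{eq_discrete_flow_2} applied at the child $v_1$ of the root, each such tree has its initial edge mapped by $j_T^{\beta,\eta}$ onto a non-trivial segment based at $\beta$ in the direction $\iota_{e_I}\eta$. Since both main ingredients are already packaged as theorems, there is no genuine new obstacle at this step — all the technical content has been pushed into Theorem~\ref{thm_flow_tropical_2}, whose proof proceeds by the same induction as Theorem~\ref{thm_flow_tropical}, with the simplification noted in the excerpt that the terminal contribution $\phi_{k,\infty}$ vanishes directly from Proposition~\ref{prop_initial_scattering} because $\iota_{e_J}\eta$ automatically belongs to any unbounded wall $\sigma_\infty$ containing $x + t\,\iota_{e_J}\eta$ for $t$ large.
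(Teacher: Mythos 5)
Your proposal matches the paper's proof exactly: choose a wall $\fe \subset e_I^\perp$ containing both $q(\fd)$ and $\beta$, invoke Theorem~\ref{thm_D_D_gamma} to reduce to $\hat{F}_{\Gamma_e}^{\fh,\fe}$, and apply Theorem~\ref{thm_flow_tropical_2} with $J=I$, $\sigma=\fe$, $x=\beta$. Your verification that $\beta \in F^{\beta,\eta}$ and that the initial segment in direction $\iota_{e_I}\eta$ is non-degenerate is a correct and useful sanity check that the paper leaves implicit.
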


\begin{proof}
Let $\fe \in \Wall_{\Supp(\fh)}$ be a wall in $\cM_\R$ such that $\fe \subset e_I^{\perp}$ and containing both $q(\fd)$
and $\beta$.
By Theorem \ref{thm_D_D_gamma}, we have 
\begin{equation} \label{eq_0_0_1}
F_{\Gamma}^{\fg,\fd}
=\frac{1}{\prod_{n \in N^+}m_\Gamma(n)!}
\hat{F}_{\Gamma_e}^{\fh, \fe}\,.
\end{equation}
On the other hand, as $\alpha$ is $(I,\eta)$-generic by Lemma \ref{lem_generic}, we can apply Theorem \ref{thm_flow_tropical_2} for $J=I$, $\sigma=\fe$, $x=\beta$, and we obtain 
\begin{equation} \label{eq_0_0_2}
    F_{\Gamma_e}^{\fh, \fe}=A_I^{\beta,\eta}\,.
\end{equation}
The result follows from \eqref{eq_0_0_1} and \eqref{eq_0_0_2}.
\end{proof}

\begin{remark} \label{rem_gps}
We compare briefly the passage 
from scattering diagrams in $N$ to scattering diagrams in $\cN$ and the perturbation of scattering diagrams introduced in \cite{MR2667135}.
Using our notations, the perturbation of  \cite{MR2667135} 
consists in replacing the hyperplanes $\gamma_i^{\perp} 
=\{ \theta \in M_\R \,|\, \theta(\gamma_i)=0\}$ by the affine hyperplanes 
$\{ \theta \in M_\R \,|\, \theta(\gamma_i)=\epsilon_i \}$ where 
$\epsilon_i \in \R$ are generic perturbation parameters.
On the other hand, denoting by $K$ the kernel of $p \colon \cN \rightarrow N$, we obtain by duality a surjective map $\pi \colon \cM_\R \rightarrow K^\vee_\R$,
where $K^\vee_{\R} \coloneqq \Hom(K,\R)$. 
We claim that our scattering diagram in $\cM_\R$ is a universal family of perturbed scattering diagrams in the sense of \cite{MR2667135}.
Indeed, fixing $\epsilon \in K^\vee_\R$
is equivalent to fixing the perturbation parameters 
$\epsilon_i$ of  \cite{MR2667135}, and the intersections of our scattering diagram in $\cM_\R$ with the fibers $\pi^{-1}(\epsilon)$ are essentially the perturbed
scattering diagrams of \cite{MR2667135}.

The embedded trees $j_T^{\beta,\eta}(T^{\circ})$ used in the proof of 
Theorem \ref{thm_flow_tree_formula_scattering_2} 
are all contained in the fiber $\pi^{-1}(\pi(\beta))$ of 
$\pi$. Indeed, all edges have directions of the form $\iota_{e_v}\eta$,
and so for every $k \in K$, we have 
$\iota_{e_v}\eta(k)=\eta(e_v,k)=0$ because $\eta$ is the pullback of 
$\langle-,-\rangle$ by $p$.
Therefore, these embedded trees viewed inside $\pi^{-1}(\pi(\beta))$
essentially coincide with the tropical curves contained in the perturbed scattering diagrams considered 
in \cite{MR2667135} (see also \cite{mandel2015scattering, MR4131036}).

By contrast, the embedded trees $j_T^{\alpha,\omega}(T^\circ)$
used in the proof of Theorem \ref{thm_flow_tree_formula_scattering}
are not contained in a given fiber of $\pi$ in general:
one cannot use the perturbed scattering diagrams in the sense of
\cite{MR2667135} and it is essential to work with 
scattering diagrams in $\cM_\R$.
\end{remark}

\section{The flow tree formula for DT invariants}
\label{section_DT_invariants}

In \S \ref{section_quivers}-\S \ref{section_dt_quivers}, we review the definition of DT invariants 
of quivers with potentials. In \S \ref{section_dt_flow_tree}, we state the flow tree formula,
which computes DT invariants in terms of a smaller set of attractor DT invariants.
We prove the flow tree formula for DT invariants in \S \ref{section_dt_proof} by 
applying the flow tree formula for scattering diagrams to the stability scattering diagram 
introduced by Bridgeland in \cite{MR3710055}.

\subsection{Quivers with potentials}
\label{section_quivers}
A \emph{quiver} $Q$ is a finite oriented graph. A \emph{potential} $W \in \C Q$ for $Q$ is a finite linear combination of oriented cycles of $Q$ in the path algebra $\C Q$ of $Q$. We assume that $Q$ does not contain oriented $2$-cycles and we denote by $Q_0$ the set of vertices of $Q$, and set $N \coloneqq \Z^{Q_0}$, with dual $M_\R \coloneqq \Hom(N,\R)$, and
\begin{equation} N^+ \coloneqq \NN^{Q_0} \backslash \{0\} \subset N\,.
\end{equation}
\begin{definition}
A \emph{representation} $E$ of $Q$ is a finite-dimensional left-module over the path algebra 
$\C Q$, that is,\ the data of a finite-dimensional $\C$-vector space 
$E_i$ for each vertex $i\in Q_0$ and of a linear map $f_{\alpha} \colon E_i \rightarrow E_j$
for every arrow $\alpha \colon i \rightarrow j$ in $Q$. Every nonzero representation of $Q$ has a \emph{dimension vector} $(\dim E_i)_{i\in Q_0} \in N^+$. 
\end{definition}
\begin{definition}
Given $\gamma \in N^+$ and a stability parameter 
$\theta \in 
\gamma^{\perp}=\{\theta'\in M_\R|\, \theta'(\gamma)=0\}$, 
a representation $E$ of $Q$ of dimension vector $\gamma$ is $\theta$-\emph{semistable} 
(resp.\ $\theta$-\emph{stable}) if for every non-zero strict subrepresentation $F \subsetneq E$, we have $\theta(F) \leq 0$ (resp.\ $\theta(F)<0$).
\end{definition}
It is shown in \cite{MR1315461} that there exists a smooth quasiprojective variety $M_\gamma^{\theta-st}$
parametrizing isomorphism classes of $\theta$-stable representations of $Q$ of dimension vector $\gamma$, and a generally singular quasiprojective variety $M_\gamma^\theta$ parametrizing S-equivalence classes of 
$\theta$-semistable representations of $Q$ of dimension vector $\gamma$. A potential $W \in \C Q$ defines regular functions $\Tr(W)_\gamma^\theta$ on the moduli spaces $M_\gamma^{\theta}$ as follows: Given a representation $E=(E_i, f_\alpha)_{i,\alpha} \in M_\gamma^\theta$
and an a oriented cycle  $c=\alpha_r \dots \alpha_0$  in $Q$ starting and ending at the vertex 
$i_0 \in Q_0$, the composition 
\begin{equation} f_c \coloneqq f_{\alpha_r} \circ \dots \circ f_{\alpha_0}
\end{equation}
of the linear maps 
$f_{\alpha_i}$ along the arrows of the cycle is an endomorphism of $E_{i_0}$, and we define the evaluation of the function $\Tr(c)_\gamma^\theta$ on $E$ as being the trace of this endomorphism. More generally, $W$ is a linear combination
$\sum_k a_k c_k$ of oriented cycles $c_k$ and we define $\Tr(W)_\gamma^\theta$
by linearity, that is, $\Tr(W)_\gamma^\theta \coloneqq \sum_k a_k \Tr(c_k)_\gamma^\theta$.

\subsection{DT invariants of quivers with potentials and flow trees}
\label{section_dt_quivers}
Let $(Q,W)$ be a quiver with potential, $\gamma \in N^+$
a dimension vector and
$\theta \in \gamma^{\perp} \subset M_\R$ a stability parameter. 
We assume that 
$\theta$ is $\gamma$-generic in the sense that $\theta(\gamma')=0$ implies $\gamma'$ collinear with $\gamma$. Then, the (refined) Donaldson-Thomas invariant 
of $(Q,W)$ for the dimension vector $\gamma$ and the stability parameter 
$\theta$ is a Laurent polynomial 
\begin{equation} \Omega_\gamma^\theta(y,t) \in \Z[y^{\pm},t^{\pm}]
\end{equation}
in two variables 
$y$ and $t$, and with integer coefficients.
In the ideal case where $M_\gamma^\theta$ is smooth and the critical locus of 
$\Tr(W)_\gamma^\theta$ is non-degenerate, $\Omega_\gamma^\theta(y,t)$ 
coincides with the 
(signed symmetrized) Hodge polynomial of the critical locus of 
$\Tr(W)_\gamma^\theta$. In general, the singularities of 
$M_\gamma^\theta$ and the degeneracy of the critical locus
require respectively the use of the theory of perverse sheaves \cite{MR751966}
and of the theory of vanishing cycles \cite{deligne1973groupes}. We will in a moment review the definition of 
$\Omega_\gamma^\theta(y,t)$ following the approach of 
\cite{MR4000572, MR4132957} and referring to \cite{MR4132957} for technical details.

We define the DT sheaf $\mathcal{D}\mathcal{T}_\gamma^\theta$ on 
$M_\gamma^{\theta}$ by 
\begin{equation} \mathcal{D}\mathcal{T}_\gamma^\theta = \begin{cases} 
      \phi_{\Tr(W)_\gamma^\theta}(IC_{M_\gamma^{\theta}}) & \mathrm{if} ~ M_\gamma^{\theta-st} \neq \emptyset \\
      0 & \mathrm{elsewise},
   \end{cases}
\end{equation}
where $IC_{M_d^{\theta}}$ denotes the intersection cohomology sheaf on $M_\gamma^\theta$
and $\phi_{\Tr(W)_\gamma^\theta}$ 
is the vanishing cycle functor 
defined by the function 
\begin{equation}\Tr(W)_\gamma^\theta \colon 
M_\gamma^\theta \longrightarrow \C\,.\end{equation} 
The cohomological DT invariant $DT_\gamma^\theta$ is then defined as the cohomology of the DT sheaf: 
\begin{equation} DT_\gamma^\theta \coloneqq H^{*}(M_\gamma^\theta, \mathcal{D}\mathcal{T}_\gamma^\theta)\,.\end{equation} 
By Saito's theory of mixed Hodge modules \cite{MR1047415}, the graded vector space $DT_\gamma^\theta$ is naturally endowed with
a (monodromic) mixed Hodge structure, and so in particular with an increasing weight 
filtration $\mathbf{W}$ and a decreasing Hodge filtration $\mathbf{F}$.
The Hodge-Deligne numbers of $DT_\gamma^\theta$ are 
\begin{equation}
h^{p,q} \coloneqq \sum_{i \in \Z} (-1)^i \dim Gr_{\mathbf{F}}^p Gr^{\mathbf{W}}_{p+q} H^i(M_\gamma^\theta, \mathcal{D}\mathcal{T}_\gamma^\theta)\,,
\end{equation}
where $Gr_{\mathbf{F}}^{*}$ and $Gr^{\mathbf{W}}_{*}$
are the graded pieces of the filtrations 
$\mathbf{F}$ and $\mathbf{W}$.
The (refined) DT invariant 
$\Omega_\gamma^\theta(y,t)$ is by definition a Laurent polynomial with 
coefficients the Hodge-Deligne numbers of $DT_\gamma^\theta$:
\begin{equation} \label{Eq: just omega}
\Omega_\gamma^\theta(y,t) \coloneqq \sum_{p,q} h^{p,q} y^{p+q}t^{p-q} \in \Z[y^{\pm}, t^{\pm}]\,.
\end{equation}

The flow tree formula is more naturally formulated in terms of 
the rational DT invariants $\overline{\Omega}_\gamma^\theta(y,t) \in \Q(y,t)$
defined by 
\begin{equation} \label{eq_multicover}
    \overline{\Omega}_{\gamma}^\theta(y,t) 
    \coloneqq \sum_{\substack{\gamma' \in N^+\\ 
    \gamma=k \gamma',\, k\in \Z_{\geq 1}}} \frac{1}{k} \frac{y-y^{-1}}{y^k - y^{-k}} \Omega_{\gamma'}^{\theta}(y^k,t^k) \,.
\end{equation}
It is proved in \cite{davison2015donaldson, MR4132957} that the dependence on $\theta$ of the invariants $\Omega_\gamma^\theta(y,t)$ is given by the wall-crossing formula of Joyce-Song and Kontsevich-Soibelman, and that the invariants 
$\overline{\Omega}_{\gamma}^\theta(y,t)$ coincide with those  
previously defined in \cite{JoyceSong, kontsevich2008stability} using the motivic Hall algebra.

\subsection{Attractor invariants and the flow tree formula}
\label{section_dt_flow_tree}
In this section we state our main result, the flow tree formula in Theorem \ref{main_thm}, which expresses the DT invariants in terms of a smaller subset of invariants, referred to as attractor invariants and defined as follows.

Let 
$\langle -,- \rangle \colon N \times N \rightarrow \Z$ be the skew-symmetric bilinear form defined by 
\begin{equation} \langle \gamma,\gamma' \rangle \coloneqq \sum_{i,j \in Q_0} 
(a_{ij}-a_{ji}) \gamma_i \gamma_j'\,,\end{equation} 
where 
$a_{ij}$ is the number of arrows in $Q$ from the vertex $i$ to the vertex $j$.

\begin{definition} \label{def_attractor_invariant}
For every $\gamma \in N^+$, the \emph{rational attractor invariant}
$\overline{\Omega}_\gamma^{*}(y,t)$ is defined by 
\begin{equation} \overline{\Omega}_\gamma^{*}(y,t) \coloneqq \overline{\Omega}_\gamma^{\theta_\gamma}(y,t)\,,\end{equation}
where $\overline{\Omega}_\gamma^{\theta_\gamma}(y,t)$ is as in \eqref{eq_multicover}, and $\theta_\gamma$ is a small $\gamma$-generic perturbation of the 
\emph{attractor point} $\langle \gamma, - \rangle \in M_\R$.
\end{definition}
\begin{remark}

The definition \ref{def_attractor_invariant} of rational attractor invariants is independent of the choice of the small 
$\gamma$-generic perturbation (see \cite[Theorem 3.1]{mozgovoy2020attractor}):
indeed, if there is a wall of marginal stability associated to a decomposition 
$\gamma=\gamma'+\gamma'$ passing through the attractor point 
$\langle \gamma,-\rangle$, then $\langle \gamma,\gamma' \rangle=0$
and so $\overline{\Omega}_\gamma^\theta(y,t)$ does not jump through this wall according to the wall-crossing formula. Replacing $\overline{\Omega}_\gamma^{\theta_\gamma}(y,t)$ in Defn.\ \ref{def_attractor_invariant} by $\Omega_\gamma^{\theta_\gamma}(y,t)$ in \eqref{Eq: just omega}, we obtain the definition of an \emph{attractor invariants}, which are related to rational attractor invariants via the formula \eqref{eq_multicover}. In what follows, we  often make use of the rational attractor invariants, which suit better to wall-crossing computations.
\end{remark}
By iteration of the wall-crossing formula, the DT invariants 
$\overline{\Omega}_\gamma^\theta(y,t)$ for any $\gamma$-generic stability parameter $\theta \in \gamma^{\perp}$ can be expressed in terms of the attractor invariants 
$\overline{\Omega}_\gamma^{*}$ by a formula of the form
\begin{equation} \label{eq_reconstruction}
    \overline{\Omega}_\gamma^\theta(y,t) = \sum_{r\geq 1} \sum_{\substack{\{\gamma_i\}_{1\leq i\leq r}\\ \sum_{i=1}^r\gamma_i = \gamma}} \frac{1}{|\Aut(\{\gamma_i\}_i)|} 
    F_r^{\theta}(\gamma_1,\dots,\gamma_r) \prod_{i=1}^r \overline{\Omega}_{\gamma_i}^{*}(y,t)\,,
\end{equation}
where the second sum is over the multisets $\{\gamma_i\}_{1\leq i\leq r}$ with $\gamma_i \in N$ and $\sum_{i=1}^r \gamma_i=\gamma$. Here, the denominator $|\Aut(\{\gamma_i\}_i)|$ is the order of the symmetry group of $\{\gamma_i\}$: if $m_{\gamma'}$ is the number of times that $\gamma' \in N$ appears in $\{\gamma_i\}_i$, then 
$|\Aut(\{\gamma_i\}_i)|=\prod_{\gamma'\in N}m_{\gamma'}!$.
The coefficients
$F_r^{\theta}(\gamma_1,\dots,\gamma_r)$ are universal in the sense that they depend 
on $(Q,W)$ only through the skew-symmetric form 
$\langle -,-\rangle$ on $N$.
The flow tree formula gives an explicit formula for coefficients 
$F_r^{\theta}(\gamma_1,\dots,\gamma_r)$ as a sum over binary trees.
We state the flow tree formula in Theorem \ref{main_thm} after introducing some notation.

Let $\gamma_1,\dots,\gamma_r \in N$ 
such that $\sum_{i=1}^r \gamma_i=\gamma$. 
As in \eqref{Eq: p}-\eqref{Eq: q}, we set 
$I \coloneqq \{1, \dots, r\}$ and we introduce a rank $r$ free abelian group $\cN=\bigoplus_{i\in I}\Z e_i$, along with the map $p: \cN \rightarrow N$ as in \eqref{Eq: p}
and the map $q \colon M_\R \rightarrow \cM_\R=\Hom(\cN,\R)$ defined as in \eqref{Eq: q}.
We also define a skew-symmetric bilinear form $\eta \in \bigwedge^2 \cM$ on $\cN$ by $\eta(e_i,e_j) \coloneqq \langle \gamma_i,\gamma_j\rangle$, and consider the image $\alpha$ of the stability parameter 
$\theta$ by $q$:
\begin{equation}
    \alpha \coloneqq q(\theta) \in \cM_\R\,.
\end{equation}
By Lemma \ref{lem_generic} the assumption that 
$\theta$ is $\gamma$-generic implies that $\alpha$ is 
$(I,\eta)$-generic and so we can consider a small enough  generic perturbation 
$\omega \in U_{I,\alpha} \cap U^\eta$ of $\eta$ as in 
Defn.\ \ref{Def: alpha generic delta} and Defn.\ \ref{def_U_eta}.

In the following theorem we state our main result, \emph{the flow tree formula}, which provides an explicit description for the universal coefficient $F_r^{\theta}(\gamma_1,\dots,\gamma_r)$ that appears in the formula \eqref{eq_reconstruction} expressing the DT invariants $\overline{\Omega}_\gamma^{\theta}(y,t)$ in terms of the attractor invariants $\overline{\Omega}_{\gamma_i}^{*}(y,t)$.

\begin{theorem} \label{main_thm}
For every small enough generic perturbation $\omega \in U_{I,\alpha} \cap U^\eta \subset \bigwedge^2 \cM_\R$ of $\eta \in \bigwedge^2 \cM_\R$, the universal coefficients $F_r^{\theta}(\gamma_1,\dots,\gamma_r)$ in \eqref{eq_reconstruction}
are given by the 
\emph{flow tree formula}:
\begin{equation}
    F_r^\theta(\gamma_1,\dots,\gamma_r) 
    = \sum_{T \in \cT_r^{\eta}} \prod_{v \in V_T^\circ} \epsilon_{T,v}^{\alpha,\omega} \,\, \kappa(\eta(e_{v'},e_{v''}))\,,
\end{equation}
where the sum is over binary trees as in \S\ref{section_trees}, 
$\epsilon^{\alpha,\omega}_{T,v} \in \{0,1,-1\}$ is as in 
\eqref{eq:epsilon}
and \begin{equation}
    \label{Eq: kappa_2}
    \kappa(x)\coloneqq {(-1)^x} \cdot \frac{y^x-y^{-x}}{y-y^{-1}}
\end{equation}
for every $x\in \Z$.
\end{theorem}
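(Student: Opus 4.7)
The plan is to derive Theorem \ref{main_thm} as a direct application of the flow tree formula for scattering diagrams, Theorem \ref{thm_flow_tree_formula_scattering}, with the specific input being the stability scattering diagram of Bridgeland \cite{MR3710055} associated to $(Q,W)$. First I would fix the target Lie algebra: let $\fg_{Q,W}$ be the nilpotent quantum-torus-type Lie algebra graded by $N^+$, whose degree-$\gamma$ piece is a one-dimensional space spanned by a generator $z^\gamma$, with Lie bracket
\begin{equation}
[z^{\gamma_1}, z^{\gamma_2}] = \kappa(\langle \gamma_1, \gamma_2 \rangle) \, z^{\gamma_1 + \gamma_2}\,,
\end{equation}
where $\kappa$ is as in \eqref{Eq: kappa_2}. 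This is precisely the Lie algebra in which refined DT wall-crossing is most cleanly formulated; the condition $[\fg_{n_1}, \fg_{n_2}] = 0$ when $\langle n_1, n_2\rangle=0$ required in \S\ref{section_initial_scattering} holds because $\kappa(0)=0$. To use \ref{thm_flow_tree_formula_scattering} we must truncate, replacing $\fg_{Q,W}$ by its finite-dimensional quotient supported on any chosen finite subset of $N^+$ that contains $\gamma$ and $\{\gamma_i\}$, which is harmless.

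Next I would invoke Bridgeland's stability scattering diagram $\phi_{Q,W}$, which is the unique consistent $(N^+, \fg_{Q,W})$-scattering diagram having initial data
\begin{equation}
I_{\phi_{Q,W}, \gamma} = \overline{\Omega}_\gamma^{*}(y,t)\, z^\gamma
\end{equation}
for every $\gamma \in N^+$; the wall-crossing formula of Joyce-Song and Kontsevich-Soibelman is equivalent to the statement that the corresponding path-ordered products produce the rational DT invariants, so that for any wall $\fd$ containing a $\gamma$-generic stability parameter $\theta \in \gamma^\perp$ one has $\phi_{Q,W}(\fd)_\gamma = \overline{\Omega}_\gamma^{\theta}(y,t)\, z^\gamma$. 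At this point, fix the multiset $\Gamma = \{\gamma_i\}_{i \in I}$ with $I = \{1,\dots,r\}$ and $\sum_i \gamma_i = \gamma$, pick a $\gamma$-generic $\theta \in \fd \cap \gamma^\perp$, set $\alpha \coloneqq q(\theta)$ as in \eqref{Eq: alpha_intro}, and choose a small enough generic perturbation $\omega \in U_{I,\alpha} \cap U^\eta$. Applying Theorem \ref{thm_flow_tree_formula_scattering} to the wall $\fd$ yields
\begin{equation}
F_\Gamma^{\fg_{Q,W},\fd}\bigl((\overline{\Omega}_n^{*}(y,t)\, z^n)_{n \in \overline{\Gamma}}\bigr) = \frac{1}{\prod_{n \in N^+} m_\Gamma(n)!}\, \hat{A}_I^{\alpha,\omega}\bigl((\overline{\Omega}_n^{*}(y,t)\, z^n)_{n \in \overline{\Gamma}}\bigr).
\end{equation}

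The third step is to evaluate the right-hand side explicitly. By Defn.\ \ref{def_specialization} the specialized flow tree map reduces to $A_I^{\alpha,\omega}(z^{\gamma_1},\dots,z^{\gamma_r})$ with each $y_i = z^{\gamma_i}$, and by Defn.\ \ref{def_final_A} this is a sum over $T \in \cT_I^\eta$ of iterated Lie brackets in $\fg_{Q,W}$ decorated by the signs $\epsilon_{T,v}^{\alpha,\omega}$. Since the single-dimensional bracket of $\fg_{Q,W}$ contributes exactly a factor $\kappa(\langle p(e_{v'}), p(e_{v''})\rangle) = \kappa(\eta(e_{v'}, e_{v''}))$ at each interior vertex $v$, an easy induction on the tree gives
\begin{equation}
A_I^{\alpha,\omega}(z^{\gamma_1},\dots,z^{\gamma_r}) = \Biggl( \sum_{T \in \cT_r^\eta} \prod_{v \in V_T^\circ} \epsilon_{T,v}^{\alpha,\omega}\, \kappa(\eta(e_{v'},e_{v''})) \Biggr) z^\gamma \,,
\end{equation}
multiplied by $\prod_i \overline{\Omega}_{\gamma_i}^{*}(y,t)$. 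Multiplying by $z^\gamma$ and summing over all decompositions $\Gamma$ of $\gamma$, the coefficient of $\prod_i \overline{\Omega}_{\gamma_i}^{*}(y,t)$ in $\overline{\Omega}_\gamma^\theta(y,t)$ is extracted from Proposition \ref{prop_scattering_from_initial_explicit}; comparing with \eqref{eq_reconstruction} and using that $|\Aut(\{\gamma_i\}_i)| = \prod_{n \in N^+} m_\Gamma(n)!$ identifies the two multiplicity prefactors, yielding the claimed expression for $F_r^\theta(\gamma_1,\dots,\gamma_r)$.

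The main obstacle I expect is essentially bookkeeping, not conceptual: ensuring that the sign and normalization conventions in Bridgeland's stability scattering diagram (including the $(-1)^x$ twist implicit in $\kappa$ and the refined multicover formula \eqref{eq_multicover}) agree with those used in \S\ref{section_scattering} so that initial data really are given by the rational attractor invariants and wall elements by the rational DT invariants of dimension equal to a multiple of the wall normal. Once this identification is clean, the step extracting the universal polynomial coefficient $F_r^\theta$ from the homogeneous-degree decomposition of $F_\Gamma^{\fg_{Q,W},\fd}$ in Proposition \ref{prop_scattering_from_initial_explicit} is formal. A secondary subtlety worth stating explicitly is that $\omega$ and $\alpha$ are chosen depending on $\Gamma$ (through $\eta$ and $q$), while $F_r^\theta$ is $\omega$-independent: this independence is already encoded in Theorem \ref{thm_flow_tree_formula_scattering} via the uniqueness part of Proposition \ref{prop_scattering_from_initial_explicit}, so no extra argument is needed.
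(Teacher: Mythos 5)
Your proposal is correct and takes essentially the same route as the paper: apply the flow tree formula for scattering diagrams (Theorem \ref{thm_flow_tree_formula_scattering}) to Bridgeland's stability scattering diagram, identify the initial data with rational attractor invariants via Proposition \ref{prop_initial_scattering}, and evaluate the specialized flow tree map in the quantum-torus Lie algebra. The only point where you are looser than the paper is the truncation step: the paper picks an additive map $\delta\colon N \to \Z$ with $\delta(N^+)\subset\Z_{\geq 1}$ and quotients by the Lie ideal $\bigoplus_{\delta(n)>\delta(\gamma)}\Q(y,t)z^n$, whereas "supported on any chosen finite subset of $N^+$" as you phrase it must in fact be a subset whose complement is a Lie ideal for the quotient to make sense; this is the role of $\delta$ and is easily supplied.
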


The flow tree formula stated in Theorem \ref{main_thm} was conjectured 
by Alexandrov and Pioline in \cite{AlexandrovPioline}. The assumption $\omega \in U_{I,\alpha} \cap U^\eta$
in Theorem \ref{main_thm} makes precise and explicit the conditions ``small enough" and ``generic" which were left slightly vague in the statement of Theorem \ref{main_thm_intro}
given in the introduction and in
the original formulation of the conjecture in \cite{AlexandrovPioline}: $\omega \in U^\eta$ is the condition ``small enough", and 
$\omega \in U_{I,\alpha}$ is the condition ``generic".

We also prove a variant of the flow tree formula
recently
conjectured by Mozgovoy \cite{mozgovoy2021operadic}
in which one perturbs points  in $\mathcal{M}_{\RR}$ rather than the skew-symmetric form. 
Recall that we denote $e_I \coloneqq \sum_{i\in I}e_i$.
By Proposition \ref{prop_generic_point},
the set $V_{I,\eta}$ of $\beta \in e_I^{\perp} \subset \cM_\R$
such that $\beta$ is $(I,\eta)$-generic and $\eta$ is $\beta$-generic is open and dense in $e_I^{\perp}$.
Finally, we denote by $V^\alpha$ the open neighborhood of 
$\alpha$ in $e_I^{\perp}$ defined by:
$\beta \in V^{\alpha}$ if and only if for every 
$n \in \cN_e^+$ such that $\alpha(n)$ is nonzero, 
$\beta(n)$ is nonzero and of the same sign as 
$\alpha(n)$.

\begin{theorem} \label{main_thm_moz_gps}
For every small enough generic perturbation $\beta \in V_{I,\eta} \cap V^\alpha$ of $\alpha$ in  $e_I^{\perp}$, 
the universal coefficient $F_r^{\theta}(\gamma_1,\dots,\gamma_r)$ which appears in
the formula \eqref{eq_reconstruction} expressing the DT invariants $\overline{\Omega}_\gamma^{\theta}(y,t)$ in terms of the attractor invariants 
$\overline{\Omega}_{\gamma_i}^{*}(y,t)$ is given by:
\begin{equation}
    F_r^\theta(\gamma_1,\dots,\gamma_r) 
    = \sum_{T \in \cT_r^{\eta}} \prod_{v \in V_T^\circ} \epsilon_{T,v}^{\beta,\eta} \,\, \kappa(\eta(e_{v'},e_{v''}))\,,
\end{equation}
where the sum is over binary trees as in \S\ref{section_trees}, $\epsilon_{T,v}^{\alpha,\omega}$ is as in \eqref{eq:epsilon} and 
$\kappa$ is as in \eqref{Eq: kappa_2}.
\end{theorem}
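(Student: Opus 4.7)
The plan is to deduce Theorem \ref{main_thm_moz_gps} from the flow tree formula for scattering diagrams (Theorem \ref{thm_flow_tree_formula_scattering_2}) applied to Bridgeland's stability scattering diagram \cite{MR3710055} of the quiver with potential $(Q,W)$. Fix an additive map $\delta \colon N \to \Z$ with $\delta(N^+) \subset \Z_{\geq 1}$ and truncate to the finite set $\{n \in N^+ \mid \delta(n) \leq \delta(\gamma)\}$, so that the underlying Lie algebra $\fg = \bigoplus_{n \in N^+} \Lambda \cdot x_n$ (over a suitable coefficient ring $\Lambda$ playing the role of the refined DT quantum torus) is finitely $N^+$-graded with bracket $[x_{n_1}, x_{n_2}] = \kappa(\langle n_1, n_2 \rangle)\, x_{n_1 + n_2}$, for $\kappa$ as in \eqref{Eq: kappa_2}. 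Bridgeland's construction then provides a consistent $(N^+, \fg)$-scattering diagram $\phi$ whose initial data are the refined attractor invariants $I_{\phi, n} = \overline{\Omega}_n^*(y, t)\, x_n$, and such that for any $\gamma$-generic stability parameter $\theta$ and wall $\fd \subset \gamma^\perp$ containing $\theta$, the wall element reads $\phi(\fd)_\gamma = \overline{\Omega}_\gamma^\theta(y, t)\, x_\gamma$.

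Combining Proposition \ref{prop_scattering_from_initial_explicit} with the wall-crossing expansion \eqref{eq_reconstruction_intro} and using that the attractor invariants $\overline{\Omega}_n^*(y,t)$ for varying $n$ can be specialized independently (because different choices of $(Q,W)$ realize arbitrary initial data), one obtains for every multiset $\Gamma = \{\gamma_i\}_{i \in I} \in \mult(N^+)$ summing to $\gamma$ the identity
\[
F_r^\theta(\gamma_1, \dots, \gamma_r)\, x_\gamma \;=\; |\Aut(\{\gamma_i\}_i)| \cdot F_\Gamma^{\fg, \fd}\bigl((x_n)_{n \in \overline{\Gamma}}\bigr).
\]
Next I apply Theorem \ref{thm_flow_tree_formula_scattering_2} with $\alpha = q(\theta) \in e_I^\perp$, which is $(I, \eta)$-generic by Lemma \ref{lem_generic}, and with $\beta \in V_{I, \eta} \cap V^\alpha$: the right-hand side becomes $\tfrac{1}{\prod_{n \in N^+} m_\Gamma(n)!}\, \hat{A}_I^{\beta, \eta}\bigl((x_n)_{n \in \overline{\Gamma}}\bigr)$. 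Since $|\Aut(\{\gamma_i\}_i)| = \prod_n m_\Gamma(n)!$, the two combinatorial factors cancel, leaving $F_r^\theta(\gamma_1, \dots, \gamma_r)\, x_\gamma = \hat{A}_I^{\beta, \eta}\bigl((x_{\gamma_i})_{i \in I}\bigr)$ after unwinding the specialization of Defn.\ \ref{def_specialization}.

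It remains to evaluate the flow tree map $A_I^{\beta, \eta}$ explicitly on the auxiliary Lie algebra $\fh$ from Defn.\ \ref{def_h}. By construction, $\fh_{e_i} = \fg_{p(e_i)} = \fg_{\gamma_i}$, so the generator $x_{e_i}$ corresponds to $x_{\gamma_i}$, and the bracket on $\fh_n \subset \fg_{p(n)}$ is inherited from $\fg$. Unfolding the recursion \eqref{eq_flow_tree_map} for a tree $T \in \cT_I^\eta$, each interior vertex $v$ yields a single nested Lie bracket whose evaluation against the $\fg$-bracket produces a factor $\kappa(\eta(e_{v'}, e_{v''}))$ (using $\eta(e_{v'}, e_{v''}) = \langle p(e_{v'}), p(e_{v''})\rangle$), multiplied by the sign $\epsilon_{T, v}^{\beta, \eta}$ from the prefactor in \eqref{eq:epsilon}. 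Summing over all $T \in \cT_I^\eta$ as in Defn.\ \ref{def_final_A} reproduces exactly the right-hand side of \eqref{eq:flow_tree_formula_2_intro}.

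The main obstacle is the careful bookkeeping of combinatorial factors: matching the universal factor $\tfrac{1}{\prod_n m_\Gamma(n)!}$ from Theorem \ref{thm_flow_tree_formula_scattering_2} against the automorphism factor in \eqref{eq_reconstruction_intro}, and verifying that the two a priori distinct neighborhoods denoted $V^\alpha$ (the one defined before Theorem \ref{main_thm_moz_gps} in terms of signs on $\cN_e^+$, and the one used in the scattering diagram setup requiring $\alpha, \beta$ to lie in a common wall $\fe \subset e_I^\perp$) coincide, which follows from the containment $\Supp(\fh) \subset \cN_e^+$. Once these identifications are established, Theorem \ref{main_thm_moz_gps} emerges as a direct corollary of Theorem \ref{thm_flow_tree_formula_scattering_2}.
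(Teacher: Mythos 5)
Your proposal is correct and follows the same route as the paper: instantiate Bridgeland's stability scattering diagram for $(Q,W)$ truncated via $\delta$, use Proposition \ref{prop_initial_scattering} to identify the initial data with the rational attractor invariants, and then apply the scattering-diagram flow tree formula of Theorem \ref{thm_flow_tree_formula_scattering_2}. The extra details you supply (the cancellation between $|\Aut(\{\gamma_i\}_i)|$ and $\prod_n m_\Gamma(n)!$, the unwinding of the specialization $\hat A_I^{\beta,\eta}$, and the matching of the two characterizations of $V^\alpha$) are exactly the bookkeeping that the paper's short proof in \S\ref{section_dt_proof} compresses into ``and so Theorems \ref{main_thm} and \ref{main_thm_moz_gps} follow.''
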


In Theorem \ref{main_thm_moz_gps}, the assumption 
$\beta \in V_{I,\eta} \cap V^\alpha$ makes precise and explicit the 
expression ``small enough generic perturbation" used in the statement of 
Theorem \ref{main_thm_intro_moz_gps} given in the introduction:
$\beta \in V^\alpha$ is the condition ``small enough", and 
$\beta \in V_{I,\eta}$ is the condition ``generic".

\subsection{Proofs of Theorems \ref{main_thm} and 
\ref{main_thm_moz_gps} }
\label{section_dt_proof}
We derive the proof of the flow tree formula in Theorem
\ref{main_thm} (and of its variant in Theorem \ref{main_thm_moz_gps}), from the flow tree formula for scattering diagrams in Theorem
\ref{thm_flow_tree_formula_scattering} (and from its variant in Theorem \ref{thm_flow_tree_formula_scattering_2} respectively).
We do this by applying the latter formulas to the stability scattering diagram, which is a $(N^+,\fg)$-scattering diagram as in Defn.\ \ref{def_scattering_diagram}, introduced by 
Bridgeland. We roughly review its description here, and for details refer to \cite{MR3710055}.

Let $(Q,W)$ be a quiver with potential, and 
$\gamma \in N^+$ be a dimension vector. 
Define a $N^+$-graded Lie algebra over $\Q(y,t)$ by
\begin{equation}
\tilde{\fg} \coloneqq \bigoplus_{n \in N^+} \Q(y,t)z^n,
\end{equation}
where the Lie bracket 
$[-,-]$ is given by
\begin{equation} \label{eq:lie_bracket}
[z^{n_1}, z^{n_2}] \coloneqq \kappa(\langle n_1, n_2 \rangle) z^{n_1+n_2}\,.
\end{equation}
where $\kappa$ is as in \eqref{Eq: kappa_2}. 
Let $\delta \colon N \rightarrow \Z$ be an additive map such that 
$\delta(N^+) \subset \Z_{\geq 1}$. Then 
\begin{equation}
\tilde{\fg}^{>n(\gamma)}
\coloneqq \bigoplus_{\substack{n \in N^+ \\ \delta(n)>\delta(\gamma)}} \Q(y,t)z^n
\end{equation}
is a Lie ideal of 
$\tilde{\fg}$ and we consider the quotient Lie algebra 
\begin{equation}\fg \coloneqq \tilde{\fg}/\tilde{\fg}^{>\delta(\gamma)}\,, 
\end{equation}
which is finitely
$N^+$-graded. The support of $\fg$ is 
$\Supp(\fg)=\{n \in N^+\,|\, \delta(n) \leq \delta(\gamma)\}$.

For every wall $\fd \in \Wall_{\Supp(\fg)}$, pick a point $x_\fd \in \fd$
such that $x_\fd \notin \fd'$ for all $\fd' \in \Wall_{\Supp(\fg)}$
distinct from $\fd$. 
The \emph{stability scattering diagram}
\begin{equation}
\label{Eq: stability scattering}
\phi \colon \Wall_{\Supp(\fg)}  \longrightarrow \fg \end{equation}
is defined by 
\begin{equation}    \phi(\fd) \coloneqq  \sum_{\substack{k \geq 1\\\delta(kn_\fd) \leq \delta(\gamma) }} \overline{\Omega}_{kn_\fd}^{x_\fd}(y,t) z^{k n_\fd}\,,
\end{equation}
for every wall $\fd\in \Wall_{\Supp(\fg)}$,
where $\overline{\Omega}_{kn_\fd}^{x_\fd}(y,t)$ are rational DT invariants defined as in \eqref{eq_multicover}. The definition of $\phi$ is in fact independent of the choices of the points $x_\fd$: by the wall-crossing formula, the DT invariants 
$\Omega_n^\theta(y,t)$
with $\delta(n) \leq \delta(\gamma)$ 
do not jump as long as $\theta$ stays in the interior of a wall $\fd \in \Wall_{\Supp(\fg)}$.
The following key theorem is due to Bridgeland \cite[Thm 1.1]{MR3710055}: 
\begin{theorem}[(Bridgeland, 2016)]
\label{Thm: Bridgeland}
The stability scattering diagram is consistent. 
\end{theorem}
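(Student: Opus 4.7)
The plan is to follow Bridgeland's original strategy via the motivic Hall algebra, which reduces the consistency of the stability scattering diagram to the Kontsevich-Soibelman wall-crossing formula. First I would set up the relevant categorical framework: work with the abelian category $\cA$ of finite-dimensional nilpotent representations of the Jacobi algebra $J(Q,W)$, whose Grothendieck group is $K_0(\cA)=N$ and whose Euler pairing is (up to sign) the skew-symmetric form $\langle-,-\rangle$. The Calabi-Yau $3$ structure on the derived category is precisely what forces this skew-symmetry and matches the Lie bracket $[z^{n_1},z^{n_2}]=\kappa(\langle n_1,n_2\rangle)z^{n_1+n_2}$ on $\fg$. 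For any stability parameter $\theta$, the moduli stack of objects in $\cA$ admits a Harder-Narasimhan stratification, which will be the source of all identities we need.

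Second, I would construct the integration map $I \colon H(\cA)^{\mathrm{sc}} \to \fg$ from an appropriate ``semi-classical'' Lie subspace of the motivic Hall algebra to $\fg$, sending the class of the substack of $\theta$-semistable objects of class $\gamma$ (after applying Joyce's logarithm construction to extract a genuine Lie element) to $\overline{\Omega}_\gamma^\theta(y,t)\,z^\gamma$. The key structural property is that $I$ is a Lie algebra homomorphism: the commutator bracket in $H(\cA)^{\mathrm{sc}}$, defined through extensions via the Hall product, matches the bracket \eqref{eq:lie_bracket} on $\fg$. Once $I$ is available, the Harder-Narasimhan factorization of the identity element in $H(\cA)$ transforms into an identity in $G=\exp(\fg)$ expressing the ``total'' element as an ordered product of $\exp(\phi(\fd))$ along any $\fg$-generic path crossing the semistable walls in the correct order. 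Consistency around a perpendicular joint $\fj$ then follows tautologically: two small loops around $\fj$ compute path-ordered products of the same HN pieces in two different orders, but both equal the image under $I$ of the same Hall algebra element supported near $\fj$, namely the identity on the corresponding open substack.

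Third, I would verify the compatibility with Defn.\ \ref{Def: initial data} of the initial data, i.e.\ that the element attached to a wall containing an attractor point recovers the attractor invariant, which is built into the construction of $\phi$ in \eqref{Eq: stability scattering}. The independence of the choice of base points $x_\fd$ used in that definition follows from the wall-crossing formula, which guarantees that the rational DT invariants $\overline{\Omega}_{kn_\fd}^{x_\fd}(y,t)$ are constant in the interior of each wall $\fd$ because $\langle kn_\fd, n_\fd\rangle=0$ suppresses any would-be jump across $\fd^{\perp}$-type subwalls.

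The main obstacle is proving that $I$ is a Lie algebra morphism on $H(\cA)^{\mathrm{sc}}$, which is the deepest technical input. Naively, characteristic functions of semistable substacks do not lie in $H(\cA)^{\mathrm{sc}}$, so one must either pass through Joyce's no-pole theorem and his logarithm construction, or upgrade the whole setup to a cohomological Hall algebra and integrate over the sheaf of vanishing cycles $\phi_{\Tr(W)_\gamma^\theta}$ in the style of Kontsevich-Soibelman and Davison-Meinhardt. Handling the singularities of the moduli stacks and the degeneracy of the critical locus of $\Tr(W)_\gamma^\theta$ is precisely what the refined invariants $\overline{\Omega}_\gamma^\theta(y,t)$ were designed to do. Once this foundational identity is in place, no further combinatorics is needed: the consistency of $\phi$ is an immediate consequence of functoriality of $I$ applied to the canonical Hall-algebra identity coming from Harder-Narasimhan filtrations.
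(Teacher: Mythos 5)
The paper does not reprove this statement; it cites Bridgeland \cite[Thm 1.1]{MR3710055} directly, and your sketch is a faithful outline of Bridgeland's actual argument (motivic Hall algebra, Harder--Narasimhan factorization of the identity, Joyce's no-pole theorem / logarithm to land in the semiclassical Lie subspace, integration map as a Lie algebra morphism). You also correctly flag that in the refined setting with a potential one must upgrade the integration map to the cohomological/vanishing-cycle framework of Davison--Meinhardt, which is the one nontrivial adaptation beyond Bridgeland's original statement.
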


More precisely, the main results of \cite{MR3710055} are stated in terms of the rational DT invariants defined by Joyce-Song \cite{JoyceSong} using the motivic Hall algebra. The comparison with the rational DT invariants defined as in \eqref{eq_multicover} is established in
\cite[\S 6.7]{davison2015donaldson}. Moreover, \cite[Thm 1.1]{MR3710055} is only stated for the ``unsigned unrefined" invariants (virtual Euler characteristics), but the proof by applying an integration map to the Hall algebra scattering diagram immediately generalizes to the case of the ``signed refined" invariants (virtual signed Hodge polynomials)
(see also \cite[\S 7.1]{davison2019strong}).

By Theorem \ref{Thm: Bridgeland}, we can apply Theorems 
\ref{thm_flow_tree_formula_scattering} and \ref{thm_flow_tree_formula_scattering_2} to the stability scattering diagram $\phi$.
By Proposition \ref{prop_initial_scattering}, the initial data of 
$\phi$ are given by the attractor DT invariants: 
\begin{equation}I_{\phi,n}=\overline{\Omega}_n^{*}(y,t) z^n\,,
\end{equation}
for every 
$n\in N^+$ with $\delta(n) \leq \gamma$, and so
Theorems 
\ref{main_thm} and \ref{main_thm_moz_gps} follow.

\bibliographystyle{plain}
\bibliography{biblio1}

\end{document}